\def\Corresponding author{$^{*}$\protect\footnotetext{$^{*}$ C\lowercase{orresponding author.}}}
\def\authorsaddresses#1{\dedicatory{#1}}
\newtheorem{thm}{Theorem}[section]
\theoremstyle {definition}
\newtheorem{cor}[thm]{Corollary}
\newtheorem{prop}[thm]{Proposition}
\newtheorem{lem}[thm]{Lemma}
\newtheorem{eg}[thm]{Example}
\newtheorem{rem}[thm]{Remark}
\numberwithin{equation}{section}
\begin{document}
\setcounter{page}{1}

\title[The Zariski topology-graph of modules]{The Zariski topology-graph of modules over commutative rings II}

\author[H. Ansari-Toroghy and S. Habibi]{H. Ansari-Toroghy$^1$ and S. Habibi$^2$}

\authorsaddresses{$^1$ Department of pure Mathematics,\\ Faculty of mathematical Sciences,\\ University of Guilan,
P. O. Box 41335-19141, Rasht, Iran.\\
e-mail: ansari@guilan.ac.ir\\
\vspace{0.5cm} $^2$ School of Mathematics, Institute for Research
in Fundamental Sciences (IPM), P.O. Box: 19395-5746, Tehran,
Iran.\\ Department of pure Mathematics, Faculty of mathematical
Sciences, University of Guilan, P. O. Box 41335-19141, Rasht,
Iran.
\\
e-mail: habibishk@gmail.com} \subjclass[2010]{13C13, 13C99, 05C75}
\keywords{Rings and modules; Zariski topology; graph; chromatic
and clique number.\\This research was in part supported by a grant
from IPM (No. 96130028)}
\begin{abstract}
Let $M$ be a module over a commutative ring $R$. In this paper, we
continue our study about the Zariski topology-graph $G(\tau_T)$
which was introduced in (The Zariski topology-graph of modules
over commutative rings, Comm. Algebra., 42 (2014), 3283--3296).
For a non-empty subset $T$ of $Spec(M)$, we obtain useful
characterizations for those modules $M$ for which $G(\tau_T)$ is a
bipartite graph. Also, we prove that if $G(\tau_T)$ is a tree,
then $G(\tau_T)$ is a star graph. Moreover, we study coloring of
Zariski topology-graphs and investigate the interplay between
$\chi(G(\tau_T))$ and $\omega(G(\tau_T))$.
\end{abstract}
\maketitle
\section{Introduction}
 Throughout this paper $R$ is a commutative ring with a non-zero
identity and $M$ is a unital $R$-module. By $N\leq M$ (resp. $N<
M$) we mean that $N$ is a submodule (resp. proper submodule) of
$M$.

Define $(N:_{R}M)$ or simply $(N:M)=\{r\in R|$ $rM\subseteq N\}$
for any $N\leq M$. We denote $((0):M)$ by $Ann_{R}(M)$ or simply
$Ann(M)$. $M$ is said to be faithful if $Ann(M)=(0)$.

Let $N, K\leq M$. Then the product of $N$ and $K$, denoted by
$NK$, is defined by $(N:M)(K:M)M$ (see \cite{af07}).

A prime submodule of $M$ is a submodule $P\neq M$ such that
whenever $re\in P$ for some
  $r\in R$ and $e \in M$, we have $r\in (P:M)$ or $e\in P$ \cite{lu84}.

The prime spectrum of $M$ is the set of all prime submodules of
$M$ and denoted by $Spec(M)$.

There are many papers on assigning graphs to rings or modules
(see, for example, \cite{al99, ah161, ah162, b88}). In
\cite{ah14}, the present authors introduced and studied the graph
$G(\tau_T)$ (resp. $AG(M)$), called the \textit{Zariski
topology-graph} (resp. \textit {the annihilating-submodule
graph}), where $T$ is a non-empty subset of $Spec(M)$.

$AG(M)$ is an undirected graph with vertices $V(AG(M))$= $\{N \leq
M |$ there exists $(0)\neq K<M$ with $NK=(0)$\}. In this graph,
distinct vertices $N,L \in V(AG(M))$ are adjacent if and only if
$NL=(0)$. Let $AG(M)^{*}$ be the subgraph of $AG(M)$ with vertices
$V(AG(M)^{*})=\{ N<M$ with $(N:M)\neq Ann(M)|$ there exists a
submodule $K<M$ with $(K:M)\neq Ann(M)$ and $NK=(0)\}$. By
\cite[Theorem 3.4]{ah14}, one conclude that $AG(M)^{*}$ is a
connected subgraph.

$G(\tau_T)$ is an undirected graph with vertices
 $V(G(\tau_T))$= $\{N < M|$ there exists $K < M$ such that $V(N)\cup V(K)=T$
  and $V(N),V(K)\neq T\}$ and distinct
   vertices $N$ and $L$ are adjacent if and only if $V(N)\cup V(L)=T$
    (see \cite[Definition 2.3]{ah14}).

The \textit{Zariski topology } on $X=Spec(M)$ is the topology
$\tau_M $ described by taking the set $\ Z(M)= \{ V(N)|$ $N$ is a
submodule of $ M \}$ as the set of closed sets of $Spec_{R}(M)$,
where $V(N)=\{P \in X|$ $(P:M)\supseteq (N:M) \}$ \cite{lu99}.

 If $Spec(M)\neq\emptyset$, the mapping $\psi
:Spec(M)\rightarrow Spec(R/Ann(M))$ such that $\psi
(P)=(P:M)/Ann(M)$ for every  $P \in Spec(M)$, is called the
\textit{natural map} of $Spec(M)$ \cite{lu99}.

A topological space $X$ is irreducible if for any decomposition
$X= X_1 \cup X_2$ with closed subsets $X_i$ of $X$ with $i= 1, 2$,
we have $X= X_1$ or $X= X_2$

The prime radical $\sqrt{N}$ is defined to be the intersection of
all prime submodules of $M$ containing $N$, and in case $N$ is not
contained in any prime submodule, $\sqrt{N}$ is defined to be $M$
\cite{lu84}.

We recall that $N< M$ is said to be a semiprime submodule of $M$
if for every ideal $I$ of $R$ and every submodule $K$ of $M$ with
$I^{2}K\subseteq N$ implies that $IK\subseteq N$. Further $M$ is
called a semiprime module if $(0)\subseteq M$ is a semiprime
submodule. Every intersection of prime submodules is a semiprime
submodule (see \cite{tv08}).

The notations $Nil(R)$, $Min(M)$, and $Min(T)$ will denote the set
of all nilpotent elements of $R$ and the set of all minimal prime
submodules of $M$, and the set of minimal members of $T$,
respectively.

A clique of a graph is a complete subgraph and the supremum of the
sizes of cliques in $G$, denoted by $\omega(G)$, is called the
clique number of $G$. Let $\chi(G)$ denote the chromatic number of
the graph $G$, that is, the minimal number of colors needed to
color the vertices of $G$ so that no two adjacent vertices have
the same color. Obviously $\chi(G)\geq \omega(G)$.

In this article, we continue our studying about $G(\tau_T)$ and
$AG(M)$ and we try to relate the combinatorial properties of the
above mentioned graphs to the algebraic properties of $M$.

In section 2 of this paper, we state some properties related to
the Zariski topology-graph that are basic or needed in the later
sections. In section 3, we study the bipartite Zariski
topology-graphs of modules over commutative rings (see Proposition
\ref{p3.1}). Also, we prove that if $G(\tau_T)$ is a tree, then
$G(\tau_T)$ is a star graph (see Theorem \ref{t3.4}). In section
4, we study coloring of the Zariski topology-graph of modules and
investigate the interplay between $\chi(G(\tau_T))$ and
$\omega(G(\tau_T))$. We show that under condition over minimal
submodules of $M/(\cap_{P\in T}P:M)M $, we have
$\omega(G(\tau_T))= \chi(G(\tau_T))$ (see Theorem \ref{t4.1}).
Moreover, we investigate some relations between the existence of
cycles in the Zariski topology-graph of a cyclic module and the
number of its minimal members of $T$ (see Proposition \ref{p4.8}).

Let us introduce some graphical notions and denotations that are
used in what follows: A graph $G$ is an ordered triple $(V(G),
E(G), \psi_G )$ consisting of a nonempty set of vertices,
 $V(G)$, a set $E(G)$ of edges, and an incident function $\psi_G$ that associates an
 unordered pair
 of distinct vertices with each edge. The edge $e$ joins $x$ and $y$ if $\psi_G(e)=\{x, y\}$, and we
 say $x$ and $y$ are adjacent. A path in graph $G$ is a finite sequence of vertices $\{x_0,
x_1,\ldots ,x_n\}$, where $x_{i-1}$ and $x_i$ are adjacent for
each $1\leq i\leq n$ and we denote $x_{i-1} - x_i$ for existing an
edge between
 $x_{i-1}$ and $x_i$.

A graph $H$ is a subgraph of $G$, if $V(H)\subseteq V(G)$,
$E(H)\subseteq E(G)$, and $\psi_H$ is the restriction of $\psi_G$
to $E(H)$. A bipartite graph is a graph whose vertices can be
divided into two disjoint sets $U$ and $V$ such that every edge
connects a vertex in $U$ to one in $V$; that is, $U$ and $V$ are
each independent sets and complete bipartite graph on $n$ and $m$
vertices, denoted by $K_{n, m}$, where $V$ and $U$ are of size $n$
and $m$, respectively, and $E(G)$ connects every vertex in $V$
with all vertices in $U$. Note that a graph $K_{1, m}$ is called a
star graph and the vertex in the singleton partition is called the
center of the graph. For some $U\subseteq V (G)$, we denote by
$N(U)$, the set of all vertices of $G\setminus U$ adjacent to at
least one vertex of $U$. For every vertex $v\in V(G)$, the size of
$N(v)$ is denoted by $deg(v)$. If all the vertices of $G$ have the
same degree $k$, then $G$ is called $k$-regular, or simply
regular. We denote by $C_{n}$ a cycle of order $n$. Let $G$ and
$G'$ be two graphs. A graph homomorphism from $G$ to $G'$ is a
mapping $\phi: V(G) \longrightarrow V(G')$ such that for every
edge $\{u, v\}$ of $G$, $\{\phi(u), \phi(v)\}$ is an edge of $G'$.
A retract of $G$ is a subgraph $H$ of $G$ such that there exists a
homomorphism $\phi: G \longrightarrow H$ such that $\phi(x)= x$,
for every vertex $x$ of $H$. The homomorphism $\phi$ is called the
retract (graph) homomorphism (see \cite{r05}).

Throughout the rest of this paper, we denote: $T$ is a non-empty
subset of $Spec(M)$, $Q:= (\cap_{P\in T}P:M)M$, $\bar{M}:=M/ Q$,
$\bar{N}:=N/Q$, $\bar{m}:=m+Q$, and $\bar{I}:=I/(Q:M)$, where $N$
is a submodule of $M$ containing $Q$, $m\in M$, and $I$ is an
ideal of $R$ containing $(Q:M)$.

\section{Auxiliary results}
In this section, we provide some properties related to the Zariski
topology-graph that are basic or needed in the sequel.

\begin{rem}\label{r2.1}
Let $N$ be a submodule of $M$. Set $V^*(N):= \{P \in Spec(M) |$
$P\supseteq N \}$. By \cite [Remark 2.2]{ah14}, For submodules $N$
and $K$ of $M$, we have
$$V(N)\cup V(K)=V(N\cap K)=V(NK)=V^*(NK).$$ By \cite [Remark 2.5]{ah14}, we have $T$ is a
 closed subset of $Spec(M)$ if and
only if $T=V(\cap_{P\in T}P)$ and $G(\tau_T)\neq \emptyset $ if
and only if $T=V(\cap_{P\in T}P)$ and $T$ is not irreducible. So
if $N$ and $K$ are adjacent in $G(\tau_T)$, then
 $V^*(NK)=V^*((\cap_{P\in T}P:M)M)$ and hence $\sqrt{NK}=\cap_{P\in T}P$.
  Therefore $\cap_{P\in T}P \subseteq \sqrt{(N:M)M}, \sqrt{(K:M)M}$.
\end{rem}

\begin{lem}\label{l2.2} (See \cite[Proposition 7.6]{af74}.)
Let $R_{1}, R_{2}, \ldots , R_{n}$ be non-zero ideals of $R$. Then
the following statements are equivalent:

\begin{itemize}
\item [(a)] $R= R_{1} \oplus \ldots \oplus R_{n}$; \item [(b)] As
an abelian group $R$ is the direct sum of $ R_{1}, \ldots ,
R_{n}$; \item [(c)] There exist pairwise orthogonal idempotents
$e_{1},\ldots, e_{n}$ with $1=e_{1}+ \ldots +e_{n}$, and
$R_{i}=Re_{i}$, $i=1, \ldots ,n$.
\end{itemize}
\end{lem}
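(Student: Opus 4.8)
The plan is to prove the three conditions equivalent by establishing the cycle $(a)\Rightarrow(b)\Rightarrow(c)\Rightarrow(a)$; since this is the classical Anderson--Fuller statement, the argument is the standard one, and I only need to be careful about what the symbol $\oplus$ means for a family of ideals. I read $R=R_{1}\oplus\cdots\oplus R_{n}$ as the internal direct sum of ideals, i.e. $R=R_{1}+\cdots+R_{n}$ together with $R_{i}\cap\sum_{j\ne i}R_{j}=(0)$ for each $i$. With this reading, $(a)\Rightarrow(b)$ is just a matter of unwinding definitions: the displayed conditions say precisely that the underlying abelian group of $R$ is the internal direct sum of the additive subgroups $R_{1},\ldots,R_{n}$.

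For $(b)\Rightarrow(c)$ I would write $1=e_{1}+\cdots+e_{n}$ with $e_{i}\in R_{i}$, the decomposition being unique by directness. Given $r\in R_{j}$ and an index $i$, the element $re_{i}$ lies in $R_{i}$ because $R_{i}$ is an ideal containing $e_{i}$, and it lies in $R_{j}$ because $R_{j}$ is an ideal containing $r$; hence $re_{i}\in R_{i}\cap R_{j}=(0)$ whenever $i\ne j$, and multiplying out $r=r\cdot 1$ yields $r=re_{j}$. Taking $r=e_{j}$ gives $e_{j}^{2}=e_{j}$, and taking $i\ne j$ gives $e_{i}e_{j}=0$, so the $e_{i}$ are pairwise orthogonal idempotents summing to $1$. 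Finally $Re_{i}\subseteq R_{i}$ since $R_{i}$ is an ideal, while $r=re_{i}\in Re_{i}$ for every $r\in R_{i}$ gives the reverse inclusion, so $R_{i}=Re_{i}$.

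For $(c)\Rightarrow(a)$, each $R_{i}=Re_{i}$ is a principal ideal, and $r=\sum_{i}re_{i}$ for every $r\in R$ shows $R=R_{1}+\cdots+R_{n}$. To get directness, suppose $\sum_{i}r_{i}=0$ with $r_{i}\in R_{i}=Re_{i}$, say $r_{i}=s_{i}e_{i}$; multiplying by $e_{j}$ and using orthogonality annihilates every term except $r_{j}e_{j}=s_{j}e_{j}=r_{j}$, so $r_{j}=0$ for each $j$. Thus $R=R_{1}\oplus\cdots\oplus R_{n}$ as an internal direct sum of ideals (and note $R_{i}R_{j}\subseteq R_{i}\cap R_{j}=(0)$ for $i\ne j$, so this is also a ring direct product decomposition).

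I do not expect a genuine obstacle here; the only place demanding care is the bookkeeping of which ideal a product such as $re_{i}$ belongs to, together with being explicit at the outset that $\oplus$ denotes the internal direct sum of ideals rather than merely of abelian groups. Everything else is a routine computation with the idempotent decomposition of $1$.
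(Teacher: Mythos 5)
Your argument is correct and is exactly the classical idempotent-decomposition proof that the paper itself does not reproduce but simply defers to via the citation to Anderson--Fuller, Proposition 7.6. The only point worth noting is that your step $R_i\cap R_j=(0)$ for $i\neq j$ follows from the stronger directness condition $R_i\cap\sum_{j\neq i}R_j=(0)$, which you correctly take as the meaning of the internal direct sum, so there is no gap.
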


\begin{prop}\label{p2.3} Suppose that $e$ is an idempotent element of
$R$. We have the following statements.

\begin {itemize}
\item [(a)] $R=R_{1}\oplus R_{2}$, where $R_{1}=eR$ and
$R_{2}=(1-e)R$. \item [(b)] $M=M_{1}\oplus M_{2}$, where
$M_{1}=eM$ and $M_{2}=(1-e)M$. \item [(c)] For every submodule $N$
of $M$, $N=N_{1}\oplus N_{2}$ such that $N_{1}$ is an
$R_{1}$-submodule $M_{1}$, $N_{2}$ is an $R_{2}$-submodule
$M_{2}$, and $(N:_{R}M)=(N_{1}:_{R_{1}}M_{1})\oplus
(N_{2}:_{R_{2}}M_{2})$.  \item [(d)] For submodules $N$ and $K$ of
$M$, $NK=N_{1}K_{1} \oplus N_{2}K_{2}$, $N\cap K=N_{1}\cap K_{1}
\oplus N_{2}\cap K_{2}$ such that $N=N_{1}\oplus N_{2}$ and
$K=K_{1}\oplus K_{2}$. \item[(e)] Prime submodules of $M$ are
$P\oplus M_{2}$ and $M_{1}\oplus Q$, where $P$ and $Q$ are prime
submodules of $M_{1}$ and $M_{2}$, respectively. \item [(f)] For
submodule $N$ of $M$, we have $\sqrt{N}= \sqrt{N_{1}\oplus N_{2}}=
\sqrt{N_{1}}\oplus \sqrt{N_{2}}$, where $N=N_{1}\oplus N_{2}$.
\end{itemize}

\end{prop}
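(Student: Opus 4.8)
The plan is to prove Proposition \ref{p2.3} part by part, starting from the decomposition of the ring and propagating it to the module, submodules, products, intersections, prime submodules, and finally radicals.

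For (a), I would invoke Lemma \ref{l2.2}: since $e$ is idempotent, $1-e$ is idempotent, $e$ and $1-e$ are orthogonal, and $1 = e + (1-e)$, so by the equivalence (c)$\Leftrightarrow$(a) of Lemma \ref{l2.2} with $n=2$ we get $R = eR \oplus (1-e)R$. For (b), apply the functor $-\otimes_R M$ (or argue directly): every $m \in M$ writes as $m = em + (1-e)m$ with $em \in eM$ and $(1-e)m \in (1-e)M$, and the sum is direct because if $x \in eM \cap (1-e)M$ then $x = ex = e(1-e)y = 0$. For (c), given $N \le M$, set $N_1 = eN = N \cap M_1$ and $N_2 = (1-e)N = N \cap M_2$; then $N = N_1 \oplus N_2$ by the same idempotent splitting, $N_i$ is an $R_i$-submodule of $M_i$, and the colon ideal splits because $r \in (N:_R M)$ iff $rM \subseteq N$ iff $erM_1 \subseteq N_1$ and $(1-e)rM_2 \subseteq N_2$, using that the decompositions of $R$, $M$, $N$ are all compatible under multiplication by $e$. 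Part (d) is a direct computation: $NK = (N:M)(K:M)M$, and since $(N:M) = (N_1:M_1)\oplus(N_2:M_2)$ and similarly for $K$, multiplying out the direct sums (orthogonality kills cross terms because $R_1 R_2 = 0$) gives $NK = (N_1:M_1)(K_1:M_1)M_1 \oplus (N_2:M_2)(K_2:M_2)M_2 = N_1 K_1 \oplus N_2 K_2$; the intersection statement follows since $N \cap K = eN \cap eK \oplus (1-e)N \cap (1-e)K$ by looking at components.

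For (e), the key observation is that a submodule $P \le M$ with $P = P_1 \oplus P_2$ is prime in $M$ iff exactly one of the following holds: $P_1$ is prime in $M_1$ and $P_2 = M_2$, or $P_1 = M_1$ and $P_2$ is prime in $M_2$. For the forward direction: if $P$ is prime and $P_2 \ne M_2$, pick $m_2 \in M_2 \setminus P_2$; then $e \cdot m_2 = 0 \in P$ but $m_2 \notin P$, so $e \in (P:M)$, forcing $M_1 = eM \subseteq P$, hence $P_1 = M_1$; one then checks $P_2$ is prime in $M_2$ using that $R_2$-multiplication on $M_2$ agrees with $R$-multiplication. The symmetric case handles $P_1 \ne M_1$, and $P = M$ is excluded. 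For the converse, if say $P = P_1 \oplus M_2$ with $P_1$ prime in $M_1$, and $rm \in P$ with $m = m_1 + m_2$, then $rm_1 \in P_1$ in $M_1$, so $rM_1 \subseteq P_1$ or $m_1 \in P_1$; in either case combined with $M_2 \subseteq P$ we get $rM \subseteq P$ or $m \in P$. Finally (f) follows from (e): the prime submodules of $M$ containing $N = N_1 \oplus N_2$ are exactly those of the form $P_1 \oplus M_2$ with $P_1 \supseteq N_1$ prime in $M_1$, together with those of the form $M_1 \oplus P_2$ with $P_2 \supseteq N_2$ prime in $M_2$; intersecting the first family gives $\sqrt{N_1} \oplus M_2$ and the second gives $M_1 \oplus \sqrt{N_2}$, and their overall intersection is $\sqrt{N_1} \oplus \sqrt{N_2}$ (with the usual convention when one of $N_i$ lies in no prime submodule, so $\sqrt{N_i} = M_i$, in which case that family is empty but the other still yields the correct answer).

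The main obstacle is part (e), specifically being careful about the correspondence between $R$-module and $R_i$-module structures on $M_i$ and verifying that primeness transfers correctly in both directions; the degenerate cases (when some $M_i = 0$, or when a submodule is contained in no prime submodule so its radical is the whole module) also need a remark so that (f) is stated with the right conventions. Everything else is routine idempotent bookkeeping that I would compress, citing the direct-sum decomposition repeatedly rather than re-deriving it each time.
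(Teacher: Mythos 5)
Your proof is correct; the paper itself dismisses this proposition with ``This is clear,'' and your argument supplies exactly the routine idempotent bookkeeping (Lemma \ref{l2.2} for (a), componentwise splitting for (b)--(d), the observation $em_2=0\in P$ forcing $M_1\subseteq P$ for (e), and the description of primes over $N$ for (f)) that the authors evidently had in mind. Your attention to the transfer of primeness between the $R$- and $R_i$-module structures and to the degenerate radical conventions is appropriate and introduces no gap.
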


\begin{proof}
This is clear.
\end{proof}

An ideal $I< R$ is said to be nil if $I$ consist of nilpotent
elements.

\begin{lem}\label{l2.4} (See \cite[Theorem 21.28]{l91}.)
Let $I$ be a nil ideal in $R$ and $u\in R$ be such that
 $u+I$ is an idempotent in $R/I$. Then there exists an idempotent
 $e$ in $uR$ such that $e-u\in I$.
\end{lem}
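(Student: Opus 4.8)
The plan is to reduce to lifting a single nilpotent element and then write the lifted idempotent down explicitly. Set $a := u^{2}-u$. Since $u+I$ is idempotent in $R/I$ we have $a\in I$, and as $I$ is nil, $a$ is nilpotent; fix $n\ge 1$ with $a^{n}=0$. (This is the only use of the ``nil'' hypothesis: one needs nilpotency of the particular element $u^{2}-u$, not a uniform exponent for all of $I$.) The goal is then to produce an idempotent $e\in uR$ with $e-u\in I$.

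For the construction I would use a binomial split. Because $u$ commutes with $1-u$, expanding $1=\bigl(u+(1-u)\bigr)^{2n-1}$ yields $1=\sum_{k=0}^{2n-1}\binom{2n-1}{k}u^{k}(1-u)^{2n-1-k}$; put
\[
e:=\sum_{k=n}^{2n-1}\binom{2n-1}{k}u^{k}(1-u)^{2n-1-k},\qquad
1-e=\sum_{k=0}^{n-1}\binom{2n-1}{k}u^{k}(1-u)^{2n-1-k}.
\]
Then I would check three things. First, $e\in uR$, since every summand of $e$ carries a factor $u^{k}$ with $k\ge n\ge 1$. Second, $e$ is idempotent: it suffices that $e(1-e)=0$, and every product of a summand of $e$ with a summand of $1-e$ is, using commutativity of $u$ and $1-u$, divisible by $u^{n}(1-u)^{n}=(u-u^{2})^{n}=(-a)^{n}=0$. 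Third, $e\equiv u\pmod I$: modulo $I$ the element $u$ is idempotent, hence so is $1-u$ with $u(1-u)\equiv 0$, so each summand of $e$ with $n\le k\le 2n-2$ is congruent to $\binom{2n-1}{k}\,u(1-u)\equiv 0$, and only the term $k=2n-1$ survives, giving $e\equiv u^{2n-1}\equiv u$. Thus $e-u\in I$, which completes the argument.

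An alternative I might run instead is a Newton-type iteration: set $u_{0}:=u$ and $u_{j+1}:=3u_{j}^{2}-2u_{j}^{3}$. One checks $u_{j+1}=u_{j}-(2u_{j}-1)(u_{j}^{2}-u_{j})$, so $u_{j+1}\in u_{j}R\subseteq uR$ and $u_{j+1}-u_{j}\in I$, while a short computation gives $u_{j+1}^{2}-u_{j+1}=\bigl(4(u_{j}^{2}-u_{j})-3\bigr)(u_{j}^{2}-u_{j})^{2}$; hence $u_{j}^{2}-u_{j}\in a^{2^{j}}R$, which vanishes once $2^{j}\ge n$, leaving $u_{j}$ idempotent. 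I do not anticipate a genuine obstacle here — this is the classical idempotent-lifting argument — the only point requiring care being the index bookkeeping in the binomial split, where the summation ranges must be arranged so that simultaneously $e\in uR$, every cross term of $e(1-e)$ contains the factor $(u-u^{2})^{n}$, and only the top-degree term survives modulo $I$.
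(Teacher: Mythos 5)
Your proof is correct; the paper itself gives no argument for this lemma, simply citing Lam's Theorem 21.28, and your binomial-split construction (as well as the Newton iteration you sketch as an alternative) is precisely the classical idempotent-lifting argument found there. All three verifications check out — in particular every cross term of $e(1-e)$ does contain $u^{n}(1-u)^{n}=(-a)^{n}=0$, and only the $k=2n-1$ term survives modulo $I$ — so nothing further is needed.
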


\begin{lem}\label{l2.5} (See \cite[Lemma 2.4]{ah161}.)
Let $N$ be a minimal submodule of $M$ and let $Ann(M)$ be a nil
ideal. Then we have $N^{2}=(0)$ or $N=eM$ for some idempotent
$e\in R$.
\end{lem}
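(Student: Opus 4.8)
The plan is to run a module-theoretic version of Brauer's classical idempotent argument for minimal left ideals, and then correct the element produced to a genuine idempotent using the lifting statement of Lemma~\ref{l2.4}. Write $I:=(N:M)$, so that by definition $N^{2}=(N:M)^{2}M=I^{2}M$ and $I^{2}M\subseteq IM\subseteq N$. If $N^{2}=I^{2}M=(0)$ there is nothing to prove, so assume $I^{2}M\neq(0)$. Then $IM\neq(0)$, and since $IM$ is a submodule of the minimal submodule $N$, minimality forces $IM=N$; consequently $I^{2}M\subseteq IM=N$ together with $I^{2}M\neq(0)$ gives $I^{2}M=N$, hence $IN=I(IM)=I^{2}M=N\neq(0)$.

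Next I would locate the candidate inside $I$ and use that $R$ is commutative. Since $IN=N\neq(0)$, there is $n'\in N$ with $In'\neq(0)$; as $In'$ is a submodule of $N$, minimality gives $In'=N$, so $en'=n'$ for some $e\in I$. Then $e^{2}n'=e(en')=en'=n'$, so $(e^{2}-e)n'=0$, and because $R$ is commutative and $N=In'$ this propagates to $(e^{2}-e)N=I\,(e^{2}-e)n'=(0)$ and similarly $eN=I\,en'=In'=N$. Thus $e$ acts surjectively on $N$ and $(e^{2}-e)$ kills $N$.

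The key observation is then that $e\in I$ gives $e^{2}-e\in I$, whence $(e^{2}-e)M\subseteq IM=N$, so $(e^{2}-e)^{2}M\subseteq (e^{2}-e)N=(0)$; that is, $(e^{2}-e)^{2}\in Ann(M)$. Since $Ann(M)$ is a nil ideal, $(e^{2}-e)^{2}$ is nilpotent, hence so is $e^{2}-e$, i.e. $e^{2}-e\in Nil(R)$. Therefore $e+Nil(R)$ is an idempotent of $R/Nil(R)$, and applying Lemma~\ref{l2.4} to the nil ideal $Nil(R)$ produces an idempotent $f\in eR$ with $f-e\in Nil(R)$. Now $f\in eR\subseteq I$ gives $fM\subseteq IM=N$; moreover $(f-e)N$ is a submodule of $N$ on which the nilpotent $f-e$ acts, so it cannot equal $N$ (else iterating would force $N=(f-e)^{k}N=(0)$), hence $(f-e)N=(0)$ and $fN=eN=N\neq(0)$. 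Thus $fM\supseteq fN=N$, so $N=fM$ with $f$ idempotent, finishing the case $N^{2}\neq(0)$.

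The step I expect to be the real obstacle is the one in the third paragraph: a naive run of the argument only yields an $e$ with $e^{2}-e$ annihilating $N$, i.e. $e^{2}=e$ modulo $Ann(N)$, and $Ann(N)$ may be strictly larger than $Ann(M)$ and need not be nil. One must instead notice that $e^{2}-e$ maps all of $M$ into $N$, so that $(e^{2}-e)^{2}M=(0)$, in order to bring the nil hypothesis on $Ann(M)$ into play, conclude that $e^{2}-e$ is actually nilpotent, and only then repair $e$ to an idempotent via Lemma~\ref{l2.4}.
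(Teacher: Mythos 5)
Your proof is correct: the Brauer-type argument producing $e\in (N:M)$ with $en'=n'$, the observation that $(e^{2}-e)M\subseteq (N:M)M=N$ and hence $(e^{2}-e)^{2}\in Ann(M)\subseteq Nil(R)$, and the lifting of the resulting idempotent of $R/Nil(R)$ via Lemma~\ref{l2.4} all check out, including the final verification that $fM=N$. The paper itself offers no proof of this lemma --- it is quoted from \cite[Lemma 2.4]{ah161} --- but your argument is the standard one for such a statement, and you correctly isolate the one genuinely delicate point, namely that $e^{2}-e$ a priori only annihilates $N$, so one must route through $(e^{2}-e)^{2}M=(0)$ to invoke the nil hypothesis on $Ann(M)$ before repairing $e$ to an honest idempotent.
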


We note that $M$ is said to be \textit{primeful} if either $M=(0)$
or $M\neq (0)$ and the natural map of $Spec(M)$ is surjective (see
\cite{lu07}).

\begin{prop}\label{p2.6} We have the following statements.
\begin{itemize}
\item [(a)] If $N,L$ are adjacent in $G(\tau_T)$, then
$\sqrt{(N:M)M}/ \cap_{P\in T}P$ and\\ $\sqrt{(L:M)M}/ \cap_{P\in
T}P$ are adjacent in $AG(M/ \cap_{P\in T}P)$. \item [(b)] If $M$
is a primeful module and $N,L$ are adjacent in $G(\tau_T)$, then
$\sqrt{N}/ \cap_{P\in T}P$ and $\sqrt{L}/ \cap_{P\in T}P$ are
adjacent in $AG(M/ \cap_{P\in T}P)$.
\end{itemize}
\end{prop}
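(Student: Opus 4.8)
The plan is to unwind the definitions and reduce everything to the product/intersection calculus already set up in Remark \ref{r2.1} and Proposition \ref{p2.6}. Recall that $AG(M/\cap_{P\in T}P)$ has as vertices the submodules $\bar N$ of $\bar M:=M/\cap_{P\in T}P$ for which there is a nonzero $\bar K<\bar M$ with $\bar N\bar K=\bar 0$, with adjacency given by $\bar N\bar L=\bar 0$. So for part (a) I start from the hypothesis that $N,L$ are adjacent in $G(\tau_T)$, i.e. $V(N)\cup V(L)=T$ with $V(N),V(L)\neq T$. By Remark \ref{r2.1}, $V^*(NL)=V^*((\cap_{P\in T}P:M)M)$ and $\sqrt{NL}=\cap_{P\in T}P$, so in particular $\cap_{P\in T}P\subseteq\sqrt{(N:M)M}$ and $\cap_{P\in T}P\subseteq\sqrt{(L:M)M}$. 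Write $A:=\sqrt{(N:M)M}/\cap_{P\in T}P$ and $B:=\sqrt{(L:M)M}/\cap_{P\in T}P$; these are genuine submodules of $\bar M$.

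The key computation is to show $A\cdot B=\bar 0$ in $\bar M$, together with $A,B\neq\bar 0$ (so that they are honest vertices of $AG(\bar M)$ and adjacency makes sense). For the product, note that $(A:\bar M)=\sqrt{(N:M)M}:M\big/(\cap_{P\in T}P:M)$ by the standard correspondence, and that $\sqrt{(N:M)M}:M$ and $\sqrt{(L:M)M}:M$ are radical ideals of $R$ containing $(\cap_{P\in T}P:M)$; their product, after applying $M$ and reducing mod $\cap_{P\in T}P$, is contained in $\sqrt{(N:M)M}\cdot\sqrt{(L:M)M}/\cap_{P\in T}P$, which in turn lies inside $\sqrt{(N:M)(L:M)M}=\sqrt{NL}/\cap_{P\in T}P=\bar 0$. (Here one uses that a radical submodule containing $\cap_{P\in T}P$ reduces to something with zero radical in $\bar M$ exactly when it equals $\cap_{P\in T}P$, and that $\sqrt{\phantom{x}}$ commutes with the quotient map in the relevant sense.) For non-triviality, observe that if $A=\bar 0$ then $\sqrt{(N:M)M}=\cap_{P\in T}P$, whence $V(N)=V^*((N:M)M)=V^*(\sqrt{(N:M)M})=V^*(\cap_{P\in T}P)=T$ (using $T=V(\cap_{P\in T}P)$ from Remark \ref{r2.1}), contradicting $V(N)\neq T$; similarly $B\neq\bar 0$. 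Finally one must separately rule out $A=B$, but even if $A=B$ the pair is still a loop-free adjacency issue — actually adjacency in $AG$ requires distinct vertices, so one checks $A\neq B$: if $A=B$ then $V(N)=V^*(\sqrt{(N:M)M})=V^*(\sqrt{(L:M)M})=V(L)$, and then $V(N)\cup V(L)=V(N)=T$ forces $V(N)=T$, again a contradiction.

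For part (b), the primeful hypothesis is exactly what upgrades $\sqrt{(N:M)M}$ to $\sqrt{N}$: when $M$ is primeful one has $\sqrt{N}=\sqrt{(N:M)M}$ for every submodule $N$ (this is the standard consequence of surjectivity of the natural map, cf. \cite{lu07}), so $\sqrt{N}/\cap_{P\in T}P=A$ and $\sqrt{L}/\cap_{P\in T}P=B$, and part (a) applies verbatim. So the only real content beyond (a) is invoking this identity.

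I expect the main obstacle to be the bookkeeping around how $\sqrt{\phantom{x}}$ interacts with the quotient by $\cap_{P\in T}P$ and with the product of submodules — specifically, justifying cleanly that $\sqrt{(N:M)M}\cdot\sqrt{(L:M)M}$, computed via the product of the radical ideals $(\sqrt{(N:M)M}:M)$ and $(\sqrt{(L:M)M}:M)$ applied to $M$, lands inside $\sqrt{NL}=\cap_{P\in T}P$. The inclusion $(\sqrt{J}:M)\cdot(\sqrt{K}:M)\subseteq\sqrt{(J:M)(K:M)}\subseteq\sqrt{JK}:M$ for submodules $J,K$ containing no surprises, but one should state it as a small auxiliary observation rather than wave at it. Everything else is definition-chasing and the correspondence between submodules of $M$ containing $\cap_{P\in T}P$ and submodules of $\bar M$.
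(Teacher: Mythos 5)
Your argument is correct and follows essentially the same route as the paper: reduce adjacency in $G(\tau_T)$ via Remark \ref{r2.1} to the containment $\sqrt{(N:M)M}\,\sqrt{(L:M)M}\subseteq \cap_{P\in T}P$, then pass to $M/\cap_{P\in T}P$; the paper reaches that containment slightly more directly by observing $V(N)=V(\sqrt{(N:M)M})$, so that $V^{*}(\sqrt{(N:M)M}\,\sqrt{(L:M)M})=T$, while for (b) it simply cites \cite[Corollary 4.5]{ah14} in place of your identity $\sqrt{N}=\sqrt{(N:M)M}$ for primeful modules. Your explicit checks that the two radicals are nonzero and distinct modulo $\cap_{P\in T}P$ are details the paper leaves implicit, so nothing is missing.
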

\begin{proof}
(a) First we see easily that for any submodule $N$ of $M$,
$V(N)=V(\sqrt{(N:M)M})$. Suppose that $N$ and $L$ are adjacent in
$G(\tau_T)$ so that $V(N) \cup V(L)= T$. Then we have
$V^{*}(\sqrt{(N:M)M} \sqrt{(L:M)M})= T$. It follows that
$\sqrt{(N:M)M} \sqrt{(L:M)M}\subseteq \cap_{P\in T}P$ (see Remark
\ref{r2.1}). Now the claim follow by Remark \ref{r2.1}.

 (b) This is clear by \cite[Corollary 4.5]{ah14}.
\end{proof}

\begin{rem}\label{r2.11} The Proposition \ref{p2.6} (a) extends \cite
[Theorem 4.4]{ah14}.
\end{rem}

\begin{lem}\label{l2.7} Assume that $T$ is a closed subset of $Spec(M)$.
Then $AG(\bar{M})^{*}$ is isomorphic with a subgraph of
$G(\tau_T)$. In particular, $AG(M/ \cap_{P\in T}P)^{*}$ is
isomorphic with an induced subgraph of $G(\tau_T)$.
\end{lem}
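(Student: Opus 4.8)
The plan is to produce an explicit injective graph homomorphism from $AG(\bar M)^{*}$ into $G(\tau_T)$, and then to check that when we take $T$ itself equal to $V(\cap_{P\in T}P)$ the map lands on an induced subgraph (the ``in particular'' claim follows since $T$ closed means $Q = (\cap_{P\in T}P:M)M$ has radical $\cap_{P\in T}P$, so $\bar M = M/\cap_{P\in T}P$ up to replacing $Q$ by its radical — I will use the hypothesis that $T$ is closed, i.e. $T = V(\cap_{P\in T}P)$, throughout). First I would set up the correspondence: a vertex of $AG(\bar M)^{*}$ is a submodule $\bar N = N/Q$ with $(\bar N:\bar M) \neq Ann(\bar M)$ for which there is $\bar K = K/Q$ with $(\bar K : \bar M) \neq Ann(\bar M)$ and $\bar N\,\bar K = \bar 0$ in $\bar M$. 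Pulling back along $M \twoheadrightarrow \bar M$, this says $N, K$ are submodules of $M$ properly containing $Q$ with $(N:M)M, (K:M)M \not\subseteq Q$ — equivalently $V(N), V(K) \subsetneq T$ — and $(N:M)(K:M)M \subseteq Q$, i.e. $V(N)\cup V(K) = V(NK) \supseteq V(Q) = T$ by Remark \ref{r2.1}; combined with the containment we get $V(N)\cup V(K) = T$ and $V(N), V(K)\neq T$. Hence $\phi(\bar N) := N$ (the unique submodule of $M$ containing $Q$ with $N/Q = \bar N$) is a well-defined vertex of $G(\tau_T)$, and the same computation run backwards shows that $\bar N\,\bar K = \bar 0$ with both annihilators proper is equivalent to $V(N)\cup V(K) = T$ with both $\neq T$. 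This is the heart of the argument and where one must be careful: the equivalence rests on translating products and radicals across the quotient, using $V(N)\cup V(K) = V(NK) = V^{*}(NK)$ and $V(Q) = T$ from Remark \ref{r2.1}, and on the observation that $(N:M)M \subseteq Q \iff (N:M) \subseteq (Q:M) \iff \bar N \cdot \bar M \subseteq \bar 0$ — I expect the bookkeeping with the various quotients $\bar N$, $\bar I$ to be the main (though routine) obstacle.

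Next I would verify that $\phi$ is a graph homomorphism and is injective. Injectivity is immediate: if $N/Q = N'/Q$ for submodules $N, N' \supseteq Q$ then $N = N'$. For the edge condition, two distinct vertices $\bar N, \bar L$ are adjacent in $AG(\bar M)^{*}$ iff $\bar N\,\bar L = \bar 0$, which by the translation above is exactly $V(N)\cup V(L) = T$, i.e. $N$ and $L$ are adjacent in $G(\tau_T)$; and $\bar N \neq \bar L$ forces $N \neq L$, so adjacency is preserved. Therefore $\phi$ embeds $AG(\bar M)^{*}$ as a subgraph of $G(\tau_T)$. For the ``in particular'' statement, since $T$ is closed we have $\sqrt{Q} = \cap_{P\in T}P$, so every prime submodule containing $Q$ contains $\cap_{P\in T}P$ and conversely; consequently the vertex set of $AG(M/\cap_{P\in T}P)^{*}$ sits inside the image of $\phi$ via $N/\!\cap_{P\in T}P \mapsto N$ (with $N \supseteq \cap_{P\in T}P \supseteq Q$), and the equivalence ``$\bar N\,\bar L = \bar 0$ in $M/\cap_{P\in T}P$'' $\iff$ ``$V(N)\cup V(L) = T$'' still holds because $V$ only depends on $(N:M)$ up to radical. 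Hence the image of $AG(M/\cap_{P\in T}P)^{*}$ is not merely a subgraph but an \emph{induced} one: any edge of $G(\tau_T)$ between two such vertices $N, L$ gives $V(N)\cup V(L) = T$, which pulls back to $\bar N\,\bar L = \bar 0$ in $M/\cap_{P\in T}P$, i.e. to an edge of $AG(M/\cap_{P\in T}P)^{*}$. This completes the plan; essentially everything reduces to the single translation lemma ``$V(N)\cup V(L) = T$ and $V(N),V(L)\neq T$'' $\iff$ ``$\bar N\,\bar L = \bar 0$ with $(\bar N:\bar M),(\bar L:\bar M)\neq Ann(\bar M)$'', after which the homomorphism, injectivity, and inducedness are formal.
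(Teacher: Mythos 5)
Your overall strategy is the same as the paper's: send $\bar N\mapsto N$ and verify directly that vertices and edges of the annihilating-submodule graph land on vertices and edges of $G(\tau_T)$, then check inducedness separately for $AG(M/\cap_{P\in T}P)^{*}$. The forward translation ($NK\subseteq Q$ gives $V(N)\cup V(K)=V(NK)\supseteq V(Q)=T$, together with $V(N),V(K)\subseteq T$ since $N,K\supseteq Q$, and $(N:M)\neq(Q:M)$ gives $V(N)\neq T$) is correct and is all that the first assertion of the lemma needs; your treatment of the ``in particular'' part, via $NL\subseteq\sqrt{NL}=\cap_{P\in T}P$ from Remark \ref{r2.1}, is also correct and is if anything a slightly more direct route than the paper's appeal to Proposition \ref{p2.6}(a).

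However, the ``translation lemma'' you single out as the heart of the argument is stated as an equivalence, and the reverse implication is false for $\bar M=M/Q$. From $V(N)\cup V(K)=T$, Remark \ref{r2.1} gives only $\sqrt{NK}=\cap_{P\in T}P$, hence $NK\subseteq\cap_{P\in T}P$; it does \emph{not} give $NK\subseteq Q=(\cap_{P\in T}P:M)M$, and in general $Q$ is strictly contained in $\cap_{P\in T}P$ (they coincide, e.g., for multiplication modules, which is exactly why the sharper ``induced'' conclusion is reserved for $AG(M/\cap_{P\in T}P)^{*}$ and not claimed for $AG(\bar M)^{*}$). The same slip resurfaces when you assert that the vertex set of $AG(M/\cap_{P\in T}P)^{*}$ ``sits inside the image of $\phi$'': a vertex $N/\cap_{P\in T}P$ satisfies $NK\subseteq\cap_{P\in T}P$, which need not force $NK\subseteq Q$, so $N/Q$ need not be a vertex of $AG(\bar M)^{*}$; what you actually need (and what does work) is to run the embedding argument afresh for the quotient by $\cap_{P\in T}P$. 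Since neither misstatement is load-bearing for the lemma as stated, your proof survives, but if taken at face value your equivalence would prove that $AG(\bar M)^{*}$ is an \emph{induced} subgraph of $G(\tau_T)$, which is a genuinely stronger and generally false claim.
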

\begin{proof} Let $\bar{N} \in V(AG(\bar{M})^{*})$. Then there exists a nonzero submodule $\bar{K}$
of $\bar{M}$ such that it is adjacent to $\bar{N}$ (if $N=K$, then
$(N:M)=(Q:M)$, a contradiction). So we have $NK\subseteq Q$. Hence
$V(NK)=T$.
 If $V(N)=T$, then $(N:M)=(Q :M)$, a contradiction. Hence
 $N$ is a vertex in $G(\tau_T)$ which is adjacent to $L$. To see
 the last assertion, let $N/\cap_{P\in T}P$ and $K/\cap_{P\in T}P$ be two vertices
 of $AG(M/ \cap_{P\in T}P)^{*}$. If $N$ and
 $K$ are adjacent in $G(\tau_T)$, then by Proposition \ref{p2.6}, $\sqrt{(N:M)M}/ \cap_{P\in T}P$
 and $\sqrt{(K:M)M}/ \cap_{P\in T}P$ are adjacent in $AG(M/ \cap_{P\in T}P)^{*}$. So
 $$\sqrt{(N:M)M}  \sqrt{(L:M)M} \subseteq \cap_{P\in T}P.$$ Since
 $$NK=((N:M)M:M)((K:M)M:M)M\subseteq \sqrt{(N:M)M} \sqrt{(L:M)M},$$
 we have $N/\cap_{P\in T}P$ and $K/\cap_{P\in T}P$ are adjacent in $AG(M/ \cap_{P\in T}P)^{*}$, as desired.
\end{proof}

\begin{lem}\label{l2.8}
If $\bar{M}$ is a faithful module, then $G(\tau_{Spec(M)})$ and
$AG(M)^{*}$ are the same.
\end{lem}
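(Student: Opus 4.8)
The plan is to unravel the definitions on both sides and show the two graphs have the same vertex set and the same edge set, using the hypothesis $\bar M = M/Q$ faithful, where $Q = (\cap_{P\in \mathrm{Spec}(M)}P : M)M$. First I would observe that when $T = \mathrm{Spec}(M)$, the intersection $\cap_{P\in T}P$ is the prime radical of $(0)$, i.e. $\sqrt{(0)}$, and $Q = (\sqrt{(0)}:M)M$. The hypothesis that $\bar M$ is faithful says $Ann(\bar M) = (0)$, i.e. $(Q:M) = Ann(M)$; I would first extract from this that $Q \subseteq \sqrt{(0)}$ forces $Q = (0)$ is \emph{not} what is needed — rather the point is that $(Q:M)=Ann(M)$ makes the condition "$(N:M)\neq Ann(M)$" in the definition of $AG(M)^*$ coincide with "$(N:M)\neq (Q:M)$", which is exactly the condition "$V(N)\neq T$" appearing implicitly in the definition of $V(G(\tau_T))$. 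This is the bridge between the two vertex sets.

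Next I would check the vertex sets coincide. A vertex $N$ of $G(\tau_{\mathrm{Spec}(M)})$ is a proper submodule with some proper $K$ satisfying $V(N)\cup V(K) = \mathrm{Spec}(M)$ and $V(N), V(K) \neq \mathrm{Spec}(M)$. By Remark \ref{r2.1}, $V(N)\cup V(K) = V^*(NK)$, and $V^*(NK) = \mathrm{Spec}(M)$ means every prime submodule contains $NK$, i.e. $NK \subseteq \sqrt{(0)} = \cap_{P\in T}P$; since $T=\mathrm{Spec}(M)$ is automatically closed ($V(\cap_P P)=\mathrm{Spec}(M)$), Lemma \ref{l2.7} already gives one inclusion of graphs, so the work is to produce the reverse. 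Here I would use that for $T=\mathrm{Spec}(M)$ we can take $Q$-containment for free: given such $N$, $K$, the condition $V(N)\neq T$ translates via the faithfulness hypothesis to $(N:M)\neq Ann(M)$, and $NK\subseteq Q$ gives $\bar N\bar K = \bar 0$ in $\bar M$. One must also confirm $\bar N \neq \bar 0$ and $\bar K \neq \bar 0$ — but $V(N)\neq T$ precisely prevents $(N:M) = (Q:M) = Ann(M)$, hence $\bar N \neq \bar 0$. So $\bar N$ is a vertex of $AG(\bar M)^* = AG(M)^*$ (the last equality because $\bar M$ faithful forces $Q=0$ up to the identification; more carefully, $AG(M)^*$ is defined on submodules of $M$ with $(N:M)\neq Ann(M)$, and under faithfulness of $\bar M$ these are exactly the submodules containing $Q$ with $(N:M)\neq(Q:M)$, matching $AG(\bar M)^*$).

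For the edge sets: $N - L$ in $G(\tau_{\mathrm{Spec}(M)})$ iff $V(N)\cup V(L) = \mathrm{Spec}(M)$ iff $NL \subseteq \sqrt{(0)}$ iff (using faithfulness, so that $Q$ and $\sqrt{(0)}$ interact as needed) $\bar N\,\bar L = \bar 0$, which is exactly adjacency in $AG(\bar M)^*$. The forward direction of this is essentially Proposition \ref{p2.6}(a) combined with the computation $NL \subseteq \sqrt{(N:M)M}\,\sqrt{(L:M)M}$ used in Lemma \ref{l2.7}; the reverse direction runs the same equivalence backwards. The identity map on submodules (equivalently $N\mapsto \bar N$) is then a graph isomorphism.

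The main obstacle I anticipate is the precise bookkeeping connecting $Q$, $\sqrt{(0)} = \cap_{P\in \mathrm{Spec}(M)} P$, and the faithfulness of $\bar M$: one needs that $NL \subseteq \sqrt{(0)}$ is equivalent to $\bar N \bar L = \bar 0$ in $\bar M$, and similarly for the "$\neq Ann$" conditions defining $AG(M)^*$ versus the "$V(-)\neq T$" conditions defining $G(\tau_T)$. The faithfulness of $\bar M$, i.e. $(Q:M) = Ann(M)$, is exactly what forces these to match up, and I would spell that out carefully rather than the routine product computations, which are already handled by Remark \ref{r2.1} and Lemma \ref{l2.7}.
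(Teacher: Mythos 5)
Your proposal is correct and follows essentially the same route as the paper: both use Lemma \ref{l2.7} for one inclusion and then check that, under faithfulness of $\bar M$, the condition $V(N)\cup V(K)=Spec(M)$ collapses to $NK=(0)$ and $V(N)\neq Spec(M)$ collapses to $(N:M)\neq Ann(M)$, so the identity map on submodules is a graph isomorphism. Your hedge about not needing $Q=(0)$ is unnecessary — faithfulness gives $(Q:M)=(0)$, hence $(\cap_{P}P:M)=(0)$ and $Q=(0)$, which is exactly what turns $NK\subseteq\cap_{P}P$ into $(N:M)(K:M)=(0)$ and thus $NK=(0)$, the step the paper's proof uses implicitly.
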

\begin{proof}
$\bar{M}$ is a faithful module so that $T=Spec(M)$. If
$G(\tau_{Spec(M)})\neq \emptyset$,
 then there exist non-trivial submodules $N$ and $K$ of $M$ which is adjacent
  in $G(\tau_{Spec(M)})$. Hence $V(NK)=Spec(M)$
 which implies that $NK=(0)$ so that $AG(M)^{*}\neq \emptyset$.
  By Lemma \ref{l2.7}, $AG(M)^{*}$
 is isomorphic with a subgraph of
$G(\tau_{Spec(M)})$. One
 can see that the vertex map $\phi: V(G(\tau_{Spec(M)})) \longrightarrow
V(AG(M)^{*})$, defined by $N\longrightarrow N$ is an isomorphism.
\end{proof}

Recall that $\Delta(G(\tau_T))$ is the maximum degree of
$G(\tau_T)$ and the length of an $R$-module $M$, is denoted by
$l_{R}(M)$.

\begin{lem}\label{l2.9}
Let every nontrivial submodule of $M$ be a vertex in $G(\tau_T)$.
If $\Delta(G(\tau_T))< \infty$, then $l_{R}(M)\leq
\Delta(G(\tau_T))+1$. Also, every non-trivial submodule of $M$ has
finitely many submodules.
\end{lem}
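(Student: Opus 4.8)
The plan is to extract a descending chain of submodules from a given chain and show that consecutive quotients force large degree at a single vertex. Suppose $l_R(M) > \Delta(G(\tau_T)) + 1$, so there is a strictly increasing chain of submodules
\[
(0) = N_0 < N_1 < N_2 < \cdots < N_k = M
\]
with $k = \Delta(G(\tau_T)) + 1$ (truncate a composition series, or any chain of that length, since if $l_R(M)=\infty$ we may take $k$ as large as we like). Each $N_i$ with $1 \le i \le k-1$ is a nontrivial submodule, hence by hypothesis a vertex of $G(\tau_T)$. I would pick one particular nontrivial submodule to serve as a ``hub'': since every nontrivial submodule is a vertex, and a vertex must have a neighbour, fix $N = N_1$ (a minimal nonzero submodule inside the chain, or just $N_1$) together with a neighbour $L$, so $V(N) \cup V(L) = T$. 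The point is that $V(N) \supseteq V(N_i)$ is false in general, but $V$ is order-reversing: $N_1 \le N_i$ gives $V(N_i) \subseteq V(N_1)$, hence $V(N_i) \cup V(L) \subseteq V(N_1) \cup V(L) = T$, and the reverse inclusion is automatic, so $V(N_i) \cup V(L) = T$ for every $i$. Thus each $N_i$ with $1 \le i \le k-1$ is adjacent to $L$ (they are distinct from $L$ and from each other, and none equals $M$). This exhibits $k - 1 = \Delta(G(\tau_T))$ distinct neighbours of $L$ among the $N_i$, and if $L$ itself appears as one of the $N_i$ we instead get one fewer but can enlarge $k$ by one at the outset; in any case we reach $\deg(L) \ge \Delta(G(\tau_T)) + 1 $ once the chain is chosen one step longer, contradicting the definition of the maximum degree. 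Hence $l_R(M) \le \Delta(G(\tau_T)) + 1$.

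For the second assertion, let $N$ be a nontrivial submodule of $M$. By what was just proved $l_R(M) < \infty$, so $l_R(N) < \infty$ and $l_R(M/N) < \infty$. A module of finite length is both Noetherian and Artinian; in particular it has only finitely many submodules provided each composition factor does — but more directly, I would argue that a finite-length module over any ring need not have finitely many submodules in general (e.g. a vector space over an infinite field), so the finiteness must come from the graph hypothesis too. The correct route: every submodule $K$ with $(0) < K < N$ extends to a nontrivial submodule of $M$, hence is a vertex, hence adjacent to some neighbour, and all such $K$ sit inside $N$; rerunning the hub argument with $L$ and the lattice of submodules of $N$ shows that any antichain or chain of nontrivial submodules of $M$ that are simultaneously neighbours of a fixed vertex is finite. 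Combined with finite length (which bounds chain lengths) and the fact that a finite-length module with all submodule-chains bounded and only finitely many of each length is finite, we conclude $N$ has finitely many submodules.

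The main obstacle is the bookkeeping in the first assertion: one must be careful that the submodules $N_1, \dots, N_{k-1}$ are genuinely \emph{distinct neighbours} of the chosen hub vertex $L$ and that $L$ is not accidentally among them or equal to $M$ — this is handled by choosing the chain one step longer than the naive count and by noting $V(N_i) \ne T$ for nontrivial $N_i$ (else $N_i$ would not be a vertex), so $N_i \ne L$ forces genuine adjacency. The de;icate point for the last sentence is justifying ``finite length $+$ bounded neighbourhoods $\Rightarrow$ finitely many submodules'': I expect to route this through the observation that any two comparable nontrivial submodules of $M$ that both lie below a common vertex's closed-set union are forced into the same ``colour class'' relative to $L$, so the submodule lattice of $N$ embeds into a finite structure controlled by $\deg(L) \le \Delta(G(\tau_T))$ and the (finite) length, whence finiteness.
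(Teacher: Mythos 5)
There is a genuine gap in your first argument: you anchor the ``hub'' at the wrong end of the chain. Having fixed $L$ with $V(N_1)\cup V(L)=T$ and noted (correctly) that $V$ is order-reversing, so $N_1\le N_i$ gives $V(N_i)\subseteq V(N_1)$, you conclude $V(N_i)\cup V(L)\subseteq V(N_1)\cup V(L)=T$ and then assert that ``the reverse inclusion is automatic.'' It is not: $T\subseteq V(N_i)\cup V(L)$ is exactly the adjacency you need, and nothing forces it --- as $i$ grows, $V(N_i)$ shrinks, and the union may become a proper subset of $T$. The fix is to take $L$ adjacent to the \emph{top} nontrivial term $N_{k-1}$ of the chain; then for $i\le k-1$ one has $V(N_i)\supseteq V(N_{k-1})$, hence $V(N_i)\cup V(L)\supseteq V(N_{k-1})\cup V(L)=T$, and all the $N_i$ really are neighbours of $L$. (This is essentially what the paper does, running the argument once for descending chains anchored at the largest term $K_1$ and once for ascending chains anchored at $N_{\Delta+1}$.)

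Your second part contains the right idea but buries it. Once you fix a neighbour $K$ of $N$ itself (so $V(N)\cup V(K)=T$), every nonzero submodule $N'\le N$ satisfies $V(N')\supseteq V(N)$ and hence $V(N')\cup V(K)=T$; thus \emph{every} nonzero submodule of $N$ is a neighbour of $K$ (with at most one exception if $K$ happens to be one of them), and $\deg(K)\le\Delta<\infty$ immediately bounds their number. No detour through composition series, antichains, or ``finite structures controlled by the length'' is needed; the digression about vector spaces over infinite fields correctly shows that the finite-length route cannot work on its own, but the replacement you sketch is too vague to count as a proof.
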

\begin{proof}
First we show that the descending chain of non-trivial submodules
$K_{1}\supsetneq K_{2}\supsetneq K_{3}\supsetneq \ldots$
terminates. Since $G(\tau_T)$ is connected, there exists a
submodule $N$ such that $V(N)\cup V(K_{1})=T$. Hence for each $i$,
$i\geq 1$, $V(N)\cup V(K_{i})=T$ and so $deg(N)=\infty$, a
contradiction. Next, let $N_{1}\subsetneq N_{2}\subsetneq
N_{3}\subsetneq \ldots$ be an ascending chain of non-trivial
submodules of $M$. Since $G(\tau_T)$ is connected, there exists a
submodule $K$ such that $V(K)\cup V(N_{\Delta+1})=T$, where
$\Delta= \Delta(G(\tau_T))$. Hence $V(K)\cup V(N_{i})=T$ for each
$1\leq i\leq \Delta+1$. Thus $deg(K)\geq \Delta+1$, a
contradiction. It follows that $l_{R}(M)\leq \Delta+1$. For the
proof of the last assertion, let $N$ be a non-trivial submodule of
$M$. Since $G(\tau_T)$ is connected, there exists a submodule $K$
such that $V(N)\cup V(K)=T$. Hence for every submodule $N'$ of
$N$, $V(N')\cup V(K)=T$. As $\Delta< \infty$, the number of
submodules of $N$ should be finite.
\end{proof}

\begin{thm}\label{t2.10} Let $\bar{M}$ be a multiplication module and $G(\tau_T)\neq \emptyset$.
Then $G(\tau_T)$ has acc (resp. dcc)
 on vertices if and only if $\bar{M}$ is a Noetherian (resp. an Artinian) module.
\end{thm}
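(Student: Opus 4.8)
The plan is to reduce the statement to a well-understood fact about multiplication modules, namely that in a multiplication module submodules correspond bijectively (and order-isomorphically) to certain ideals, and that ascending/descending chain conditions on such a module are governed by the underlying ring modulo the annihilator. First I would recall that $\bar{M}$ is a multiplication module means every submodule of $\bar{M}$ has the form $\bar{I}\bar{M}$ for an ideal $\bar{I}$ of $R/(Q:M)$, and that the correspondence $\bar{I}\mapsto \bar{I}\bar{M}$ together with the radical relation behaves well; in particular the submodule lattice of $\bar{M}$ is Artinian (resp.\ Noetherian) if and only if $\bar{M}$ is an Artinian (resp.\ Noetherian) module, which is the standard meaning of dcc/acc on submodules.

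The substantive step is to relate chains of \emph{vertices} of $G(\tau_T)$ to chains of submodules of $\bar{M}$. By Remark \ref{r2.1}, $G(\tau_T)\neq\emptyset$ forces $T=V(\cap_{P\in T}P)$, so $T$ is closed, and Lemma \ref{l2.7} then gives that $AG(\bar{M})^{*}$ is isomorphic to an (induced) subgraph of $G(\tau_T)$; conversely, if $N$ is a vertex of $G(\tau_T)$ adjacent to $L$, then $V^*(NL)=V^*(Q)$, hence $\sqrt{NL}=\cap_{P\in T}P$, so $\bar{N}\,\bar{L}\subseteq \sqrt{(0)}$ in $\bar{M}$ — and when $\bar{M}$ is a multiplication module one upgrades this: since in a multiplication module every prime submodule is of the form $\bar{P}\bar{M}$ with $\bar P\in\operatorname{Spec}(R/(Q:M))$, one checks that $\bar N$ (or $\sqrt{(\bar N:\bar M)\bar M}$) is itself an annihilating submodule of $\bar M$, i.e.\ a vertex of $AG(\bar M)$. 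The key point to nail down is that the vertex sets of $G(\tau_T)$ and of (essentially) $AG(\bar M)^{*}$ are linked by an order-preserving correspondence on the level of radicals of submodules, so that an infinite strictly ascending (resp.\ descending) chain of vertices in one graph produces an infinite strictly ascending (resp.\ descending) chain of submodules of $\bar M$, and vice versa.

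Concretely, for the forward direction I would argue the contrapositive: if $\bar M$ is not Noetherian, pick a strictly ascending chain $\bar N_1\subsetneq \bar N_2\subsetneq\cdots$ of submodules of $\bar M$; using Lemma \ref{l2.7} (or a direct check that each $N_i$ satisfies $V(N_i)\cup V(K)=T$ for a fixed $K$ with $N_i K\subseteq Q$, which exists because $G(\tau_T)\neq\emptyset$ and one can multiply the partner submodule witnessing nonemptiness into the chain) one gets that all but possibly finitely many $N_i$ are vertices of $G(\tau_T)$, and the multiplication hypothesis guarantees the inclusions among them remain strict as vertices — contradicting acc on vertices. The descending case is symmetric, using dcc. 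For the converse, an infinite chain of vertices of $G(\tau_T)$ is in particular an infinite chain of submodules of $M$ containing $Q$, hence descends to an infinite chain of submodules of $\bar M$, which are strict because passing to $\bar M$ is injective on submodules containing $Q$; this contradicts Noetherian-ness (resp.\ Artinian-ness) of $\bar M$.

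The main obstacle I anticipate is the bookkeeping in the step ``vertex of $G(\tau_T)$ $\Rightarrow$ annihilating submodule of $\bar M$'': one must be careful that the radical operator $\sqrt{\cdot}$ and the passage $N\mapsto (N:M)M$ do not destroy strictness of inclusions, and here the multiplication hypothesis is exactly what is needed — in a multiplication module one has $N=(N:M)M$ for every submodule, so $\sqrt{(N:M)M}=\sqrt N$ and the various submodules appearing in Remark \ref{r2.1} and Lemma \ref{l2.7} coincide, collapsing the potential gap between ``$N$ is a vertex'' and ``$\bar N$ is an annihilating submodule.'' Once that identification is clean, the chain-condition equivalence is immediate, and the ``resp.'' cases run in complete parallel.
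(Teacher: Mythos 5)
There is a genuine gap in the forward direction. Your plan is to say: if $\bar M$ is not Noetherian, take a strictly ascending chain $\bar N_1\subsetneq \bar N_2\subsetneq\cdots$ in $\bar M$ and show its terms are vertices of $G(\tau_T)$. But an arbitrary submodule of $\bar M$ need not be a vertex: being a vertex requires a partner $K$ with $V(N_i)\cup V(K)=T$ (equivalently, after passing to radicals, that $\bar N_i$ annihilates something nonzero in $\bar M$), and a cofinal chain in $\bar M$ will generally have no such partner. Your parenthetical repair --- intersecting (or multiplying) the chain with a fixed partner submodule coming from the nonemptiness of $G(\tau_T)$ --- does not work either, because intersecting a strictly ascending chain with a fixed submodule can stabilize, so strictness is lost. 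This is exactly the point where the identification of vertices of $G(\tau_T)$ with vertices of $AG(\bar M)^*$ stops helping: chain conditions on the set of \emph{vertices} do not obviously propagate to chain conditions on \emph{all} submodules of $\bar M$.

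The paper bridges this gap with a different device. Since $G(\tau_T)\neq\emptyset$, $\bar M$ is not a prime module, so there exist $r\in R$ and $\bar m\neq\bar 0$ with $r\bar m=\bar 0$ and $r\notin Ann(\bar M)$. Then $\overline{rM}$ and $(\bar 0:_{\bar M}r)$ are nonzero with product $\bar 0$, and --- here the multiplication hypothesis is used, to guarantee that a nonzero submodule $\bar N$ satisfies $V(N)\neq T$, since $\bar N=(\bar N:\bar M)\bar M$ --- \emph{every} nonzero submodule contained in either of them is a vertex. Hence acc (resp. dcc) on vertices gives acc (resp. dcc) on the submodules of both $\overline{rM}$ and $(\bar 0:_{\bar M}r)$, and the isomorphism $\overline{rM}\cong\bar M/(\bar 0:_{\bar M}r)$ exhibits $\bar M$ as an extension of a Noetherian (resp. Artinian) module by a Noetherian (resp. Artinian) module, which finishes the argument. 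Your proposal is missing precisely this extension step; without it, or some substitute for it, the claimed implication from acc on vertices to acc on all of $\bar M$ is not established. (Your converse direction also asserts without justification that every vertex contains $Q$; that claim needs an argument, though the paper itself leaves the converse implicit.)
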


\begin{proof} Suppose that $G(\tau_T)$ has acc (resp. dcc) on vertices.
By \cite [Remark 2.6]{ah14}, $\bar{M}$ is not a prime module and
hence there exists $r \in R$ and $\bar{m}\in \bar{M}$ such that
$r\bar{m}=\bar{0}$ but $\bar{m}\neq \bar{0}$ and $r \notin
Ann(\bar{M})$. Now $\overline{rM}\cong
\bar{M}/(\bar{0}:_{\bar{M}}r)$. Further, $\overline{rM}$ and
$(\bar{0}:_{\bar{M}}r)$ are vertices because
$(\bar{0}:_{\bar{M}}r)(\overline{rM})=((\bar{0}:_{\bar{M}}r):\bar{M})(\overline{rM}:\bar{M})\bar{M}\subseteq
\overline{rM}((\bar{0}:_{\bar{M}}r):\bar{M})\subseteq
r(\bar{0}:_{\bar{M}}r)=\bar{0}$. Then $\{\bar{N}|$ $\bar{N}\leq
\bar{M}, \bar{N}\subseteq \overline{rM}\}\cup \{\bar{N}:
\bar{N}\leq \bar{M}, \bar{N}\subseteq (\bar{0}:_{\bar{M}}r)\}
\subseteq V(G(\tau_T))$.
 It follows that the $R$-modules $\overline{rM}$ and $(\bar{0}:_{\bar{M}}r)$ have acc (resp. dcc) on submodules.
 Since $\overline{rM}\cong \bar{M}/(\bar{0}:_{\bar{M}}r)$, $\bar{M}$ has acc on submodules and the proof is completed.
\end{proof}

\section{Zariski topology-graph of modules}
First, in this section we give the more notation to be used
throughout the remainder of this article. Suppose that $e$ $(
e\neq 0, 1)$ is an idempotent element of $R$. Let $M_{1}:=eM,
M_{2}:=(1-e)M, T_{1}:=\{P_{1}\in Spec(M_{1})| P_{1}\oplus M_{2}\in
T\}$, $T_{2}:=\{P_{2}\in Spec(M_{2})| M_{1}\oplus P_{2}\in T\}$,
$Q_{1}:=(\cap_{P_{1}\in T_{1}}P_{1}:M_{1})M_{1},
Q_{2}:=(\cap_{P_{2}\in T_{2}}P_{2}:M_{2})M_{2}$, $\bar{M_{1}}=
\overline{eM}=eM/Q_{1}$, and $\bar{M_{2}}=
\overline{(e-1)M}=(e-1)M/Q_{2}$. Consequently we have,
$Q=Q_{1}\oplus Q_{2}$, where $Q= (\cap_{P\in T}P:M)M$ and
$\bar{M}\cong \bar{M_{1}}\oplus \bar{M_{2}}$

We recall that a submodule $N$ of $M$ is a prime $R$-module if and
only if it is a prime $R/Ann(M)$-module (see \cite[Result
1.2]{ah14}).

\begin{prop}\label{p3.1} Suppose that $\bar{M}$ does not have a non-zero submodule
 $\overline{\cap_{P\in T}P}\neq \bar{N}$ with $V(N)=T$. Then the following statements hold.
\begin{itemize}
\item [(a)] If there exists a vertex of $G(\tau_T)$ which is
adjacent to every other vertex, then $\bar{M_{1}}$ is a simple
module and $\bar{M_{2}}$ is a prime module for some idempotent
element $e\in R$. \item [(b)] If $\bar{M_{1}}$ and $\bar{M_{2}}$
are prime modules for some idempotent element $e\in R$, then
$G(\tau_T)$ is a complete bipartite graph.
\end{itemize}
\end{prop}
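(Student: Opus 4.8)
The plan is to work in the decomposition $\bar M \cong \bar M_1 \oplus \bar M_2$ coming from an idempotent $e$, and to translate adjacency in $G(\tau_T)$ into the product/radical language of Remark \ref{r2.1} and Proposition \ref{p2.6}.

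For part (a): suppose $N_0$ is a vertex adjacent to every other vertex. First I would pass to $\bar M$ and note that the hypothesis ``$\bar M$ has no nonzero $\bar N \neq \overline{\cap_{P\in T}P}$ with $V(N)=T$'' forces any vertex $N$ to satisfy $\overline{(N:M)M} \neq \bar 0$ and $\overline{\sqrt{(N:M)M}} \neq \bar M$. By Proposition \ref{p2.6}(a), $\overline{\sqrt{(N_0:M)M}}$ is then a vertex of $AG(\bar M)$ adjacent (in $AG(\bar M)$) to $\overline{\sqrt{(N:M)M}}$ for every vertex $N$; in particular $\overline{\sqrt{(N_0:M)M}}\cdot \bar L = \bar 0$ for a large family of submodules $\bar L$. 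I would use this to produce an idempotent: since $G(\tau_T)$ is connected (Remark \ref{r2.1}) and $N_0$ dominates, the submodule $\overline{\sqrt{(N_0:M)M}}=:\bar A$ satisfies $\bar A \bar B=\bar 0$ where $\bar B$ is the ``complementary'' submodule; annihilator considerations (as in the structure lemmas \ref{l2.4}, \ref{l2.5}) then give $\bar M = \bar M_1 \oplus \bar M_2$ with $\bar M_1 = \bar A$ (essentially). The no-$V(N)=T$ hypothesis forces $\bar A$ to be minimal among such submodules — hence $\bar M_1$ is simple — and it forces $\bar M_2$ to have no nontrivial product decomposition, i.e.\ $\bar M_2$ is a prime module. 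The delicate point is checking that the idempotent can be chosen so that $T_1, T_2$ behave correctly, i.e.\ that $Q_1 = (\cap_{P_1\in T_1}P_1 : M_1)M_1$ really is zero in $\bar M_1$ and similarly for $\bar M_2$; this uses Proposition \ref{p2.3}(e),(f) to identify $\cap_{P\in T}P$ with $(\cap P_1) \oplus (\cap P_2)$.

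For part (b): assume $\bar M_1$ and $\bar M_2$ are prime modules. Using Proposition \ref{p2.3}(e), the prime submodules of $\bar M$ lying in $T$ are exactly those of the form $P_1 \oplus \bar M_2$ (with $P_1$ prime in $\bar M_1$) or $\bar M_1 \oplus P_2$; since $\bar M_1, \bar M_2$ are prime, $\cap_{P\in T}P = (0)$ in $\bar M$, so $T = \mathrm{Spec}(\bar M)$-relevant part decomposes. I would then exhibit the bipartition explicitly: one part $U = \{ N : \overline{N} \subseteq \bar M_1 \oplus 0,\ \overline{N}\neq 0 \}$-type submodules (more precisely those vertices $N$ with $\sqrt{(N:M)M} \subseteq$ the $M_1$-block), the other part $W$ the analogous submodules supported on the $M_2$-block. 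By Proposition \ref{p2.3}(d) and Remark \ref{r2.1}, two vertices $N, L$ are adjacent iff $V^*(NL) = T$ iff (after reducing mod $Q$ and using primeness of each $\bar M_i$) one of $N, L$ is ``$M_1$-supported'' and the other ``$M_2$-supported'' with each covering the whole respective spectrum. This shows edges go only between $U$ and $W$ (bipartite), and that \emph{every} $N\in U$ is adjacent to \emph{every} $L\in W$ (complete), since primeness of $\bar M_i$ makes $V(N)$ restricted to the $M_1$-block automatically all of $\mathrm{Spec}(\bar M_1)$. Hence $G(\tau_T)$ is complete bipartite.

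I expect the main obstacle to be part (a): extracting the idempotent $e$ from a single dominating vertex. The existence of a dominating vertex gives $\bar A \bar B = \bar 0$ with $\bar A + \bar B$ ``large'', but to get $\bar M = \bar A \oplus \bar B$ (an honest direct sum with the summands being $eM, (1-e)M$) one needs $\bar A \cap \bar B = \bar 0$ and $\bar A + \bar B = \bar M$, which does not come for free — it requires using the no-$V(N)=T$ hypothesis to rule out ``thin'' overlaps, plus Lemma \ref{l2.5} (via faithfulness of $\bar M$ over $R/Ann(\bar M)$, with $Ann(\bar M)$ reduced so nil) to convert the multiplicative splitting into an idempotent. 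Pinning down that $\bar M_1$ is \emph{simple} (not merely prime) and that the induced $T_1$ makes $\bar M_1$ have trivial $Q_1$ is where the hypothesis of the Proposition does its real work, and I would spend most of the argument there.
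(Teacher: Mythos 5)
Your overall plan (decompose $\bar M$ via an idempotent, then read off simplicity/primeness of the two blocks) is the right one, and your part (b) is essentially the paper's argument: both bipartition the vertices according to which of the two closed sets $V(M_1\oplus Q_2)$, $V(Q_1\oplus M_2)$ a vertex realizes, and use primeness of each $\bar M_i$ to show every vertex realizes exactly one of these two sets, which gives completeness. (Your side claim that $\overline{\cap_{P\in T}P}=\bar 0$ is neither justified nor needed; what the hypothesis gives is $\bar Q=\bar 0$.)

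The genuine gap is in part (a), precisely at the point you yourself flag as ``the main obstacle'': you never actually produce the idempotent, and the mechanism you propose for doing so (``$\bar A\bar B=\bar 0$ with $\bar A+\bar B$ large, rule out thin overlaps'') is not the one that works --- a relation $\bar A\bar B=\bar 0$ does not yield a direct summand. The argument that works runs in the opposite logical order from yours: \emph{first} establish minimality, \emph{then} split. Concretely, let $N$ be the dominating vertex; since $V(N)=V((N:M)M)=V(\sqrt N)$ one may take $N=\sqrt N=(N:M)M$. For any $K$ with $Q\subsetneq K\subsetneq N$ one has $V(K)\supseteq V(N)$, so $K$ is itself a vertex and hence adjacent to $N$ by domination, giving $T=V(K)\cup V(N)=V(K)$ and contradicting the standing hypothesis on $\bar M$; thus $\bar N$ is a \emph{minimal} submodule of $\bar M$. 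Since $V(N)\neq T$ forces $\bar N^2\neq\bar 0$, Lemma \ref{l2.5} (this minimality is exactly its hypothesis) yields $\bar N=\bar e\bar M$ for an idempotent, whence $\bar M\cong\bar M_1\oplus\bar M_2$ with the dominating vertex equal to $M_1\oplus Q_2$. You derive minimality \emph{after} asserting the splitting, so your chain of implications cannot be completed as written. A second omission: primeness of $\bar M_2$ is not a formal consequence of ``no nontrivial product decomposition''; one must take a hypothetical relation $\bar I\bar K=\bar 0$ with $(Q_2:M_2)\subsetneq I<R$ and $Q_2\subsetneq K<M_2$, check that $M_1\oplus K$ and $Q_1\oplus IM_2$ are adjacent vertices, and then use domination by $M_1\oplus Q_2$ to force $V(M_1\oplus K)=T$, a contradiction. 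Both of these constructions are absent from the proposal, so as it stands it is an outline rather than a proof.
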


\begin{proof}
$(a)$ Suppose that $N$ is adjacent to every other vertex of
$G(\tau_T)$. Since $V(N)=V((N:M)M)$, we have $N=(N:M)M$ and hence
$V(N)=V^{*}(N)$. Thus $N=\sqrt{N}$ because $V(N)=V(\sqrt{N})$. We
claim that $\bar{N}$ is a minimal submodule of $\bar{M}$. Let $Q
\subsetneq K \subsetneq N$. If $V(K)\neq T$, then $K$ is adjacent
to $N$ and hence $V(K)=T$, a contradiction. So $\bar{N}$ is a
minimal submodule of $\bar{M}$. We have $(\bar{N})^{2}\neq (0)$
because $V(N)\neq T$. Then Lemma \ref{l2.5}, implies that
$\bar{M}\cong \overline{eM}\oplus \overline{(e-1)M}$ for some
idempotent element $e$ of $R$. Without loss of generality we may
assume that $M_{1}\oplus Q_{2}$ is adjacent to every other vertex.
We claim that $\bar{M_{1}}$ is a simple module and $\bar{M_{2}}$
is a prime module. Let $Q_{1}\subsetneq K < M_{1}$. We have
$V(K\oplus Q_{2})\neq T$ because $Q_{1}\oplus Q_{2}\subsetneq
K\oplus Q_{2}$. Since $V(K\oplus Q_{2})\cup V(Q_{1}\oplus
M_{2})=T$, we have $K\oplus Q_{2}$ is a vertex and hence is
adjacent to $M_{1}\oplus Q_{2}$. Therefore $V(K\oplus Q_{2})\cup
V(M_{1}\oplus Q_{2})=V(K\oplus Q_{2})=T$, a contradiction. It
implies that $\bar{M_{1}}$ is a simple module. Now, we show that
$\bar{M_{2}}$ is a prime module. It is enough to show that is a
prime $R/(Q_{2}:M_{2})$-module. Otherwise, $\bar{I}\bar{K}=
(\bar{0})$, where $(Q_{2}:M_{2})\subsetneq I< R$ and
$Q_{2}\subsetneq K< M$. It follows that $V(M_{1}\oplus K) \cup
V(Q_{1}\oplus IM_{2})=V(Q_{1}\oplus K(IM_{2}))=T$ because
$K(IM_{2})\subseteq IK\subseteq Q_{2}$ and
$(Q_{2}:M_{2})^{2}M_{2}\subseteq K(IM_{2})$ (note that
$(Q_{2}:M_{2})\subseteq (K:M)$ and $(Q_{2}:M_{2})\subseteq I$).
Therefore $V(M_{1}\oplus K)\cup V(M_{1}\oplus
Q_{2})=T=V(M_{1}\oplus Q_{2})$, a contradiction (note that
$M_{1}\oplus K$ is properly containing $Q_{1}\oplus Q_{2}).$

$(b)$ Assume that $N_{1}\oplus N_{2}$ is adjacent to $K_{1}\oplus
K_{2}$. One can see that $\sqrt{N_{1}K_{1}}\oplus
\sqrt{N_{2}K_{2}}=\sqrt{Q_{1}}\oplus \sqrt{Q_{2}}$. It implies
that $\overline{(\sqrt{(K_{1}:M_{1})M_{1}}:M_{1})}$ $
\overline{\sqrt{( N_{1}:M_{1})M_{1}}}=(\bar{0})$ and
$\overline{(\sqrt{(K_{2}:M_{2})M_{2}}:M_{2})}$ $ \overline{\sqrt{(
N_{2}:M_{2})M_{2}}}=(\bar{0})$. Since $\bar{M_{1}}$ and
$\bar{M_{2}}$ are prime modules,
$(\sqrt{(K_{1}:M_{1})M_{1}}:M_{1})=(Q_{1}:M_{1})$ or
$\sqrt{(N_{1}:M_{1})M_{1}}= Q_{1}$ and
$(\sqrt{(K_{2}:M_{2})M_{2}}:M_{2})=(Q_{2}:M_{2})$ or
$\sqrt{(N_{2}:M_{2})M_{2}}= Q_{2}$. Therefore $G(\tau_T)$ is a
complete bipartite graph with two parts $U$ and $V$ such that
$N\in U$ if and only if $V(N)=V(M_{1}\oplus Q_{2})$ and $K\in V$
if and only if $V(K)=V(Q_{1}\oplus M_{2})$.
\end{proof}

\begin{cor}\label{c3.2} Let $\bar{M}$ be a faithful module and
does not have a non-zero submodule
 $\overline{\cap_{P\in T}P}\neq \bar{N}$ with $V(N)=T$.
Then the following statements are equivalent.

\begin {itemize}
\item [(a)] There is a vertex of $G(\tau_{Spec(M)})$ which is
adjacent to every other vertex of $G(\tau_{Spec(M)})$. \item [(b)]
$G(\tau_{Spec(M)})$ is a star graph. \item [(c)] $M=F\oplus D$,
where $F$ is a simple module and $D$ is a prime module.
\end{itemize}
\end{cor}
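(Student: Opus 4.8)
The plan is to derive Corollary~\ref{c3.2} as a specialization of Proposition~\ref{p3.1} together with Lemma~\ref{l2.8}, since the hypothesis that $\bar M$ is faithful forces $T = Spec(M)$, $Q = \cap_{P\in T}P = (0)$, and $\bar M = M$. Under this identification the extra hypothesis on $\bar M$ becomes exactly the hypothesis of Proposition~\ref{p3.1}, so both parts of that proposition are available verbatim, and $G(\tau_{Spec(M)})$ coincides with $AG(M)^*$ by Lemma~\ref{l2.8}. I would prove the cycle of implications $(a)\Rightarrow(c)\Rightarrow(b)\Rightarrow(a)$.

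For $(a)\Rightarrow(c)$: assuming a vertex $N$ adjacent to every other vertex of $G(\tau_{Spec(M)})$, Proposition~\ref{p3.1}(a) gives an idempotent $e\in R$ with $\bar M_1 = eM$ simple and $\bar M_2 = (1-e)M$ prime; since $Q_1 = Q_2 = (0)$ here (as $Q=(0)$ decomposes as $Q_1\oplus Q_2$ and each summand is contained in $Q$), this is literally $M = F\oplus D$ with $F = eM$ simple and $D = (1-e)M$ prime, giving $(c)$. For $(c)\Rightarrow(b)$: write $M = F\oplus D$ with $F$ simple and $D$ prime; these are in particular prime modules (a simple module is prime), so Proposition~\ref{p3.1}(b) shows $G(\tau_{Spec(M)})$ is complete bipartite with parts $U = \{N : V(N) = V(F\oplus 0)\}$ and $V = \{K : V(K) = V(0\oplus D)\}$. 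One then checks that one of these parts is a singleton: because $F$ is simple, the only submodule $N$ with $V(N) = V(F)$, $V(N)\neq T$ is $N = F$ itself (any such $N$ must have $N = (N:M)M = \sqrt N$ and be a minimal submodule of $M$ contained in $F\oplus D$ with the right radical, and simplicity of $F$ pins it down), so $|U| = 1$ and $G(\tau_{Spec(M)})$ is a star graph. Finally $(b)\Rightarrow(a)$ is immediate: the center of a star graph is adjacent to every other vertex.

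The main obstacle is the step inside $(c)\Rightarrow(b)$ showing that the part corresponding to the simple summand $F$ is a singleton, i.e.\ that $F$ is the \emph{unique} vertex $N$ with $V(N) = V(F\oplus 0)$ and $V(N)\neq T$; this requires arguing, as in the proof of Proposition~\ref{p3.1}(a), that such an $N$ satisfies $N = \sqrt{(N:M)M}$, corresponds to a minimal submodule of $\bar M = M$ whose image in the $F$-component is nonzero, and that simplicity of $F$ leaves no room for a proper choice. A secondary point to verify carefully is the bookkeeping that $\bar M$ faithful indeed forces $T = Spec(M)$ and $Q = (0)$, so that the hypotheses of Proposition~\ref{p3.1} and of Lemma~\ref{l2.8} are genuinely met; once that is in place the rest is a routine translation.
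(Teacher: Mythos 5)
Your overall strategy (specialize Proposition \ref{p3.1} after noting that faithfulness of $\bar M$ gives $T=Spec(M)$, $Q=(0)$ and $\bar M=M$) is the paper's, but reordering the cycle as $(a)\Rightarrow(c)\Rightarrow(b)\Rightarrow(a)$ creates a genuine gap in $(c)\Rightarrow(b)$. Proposition \ref{p3.1}(b) is stated for the decomposition $M=eM\oplus(1-e)M$ induced by an idempotent $e\in R$; hypothesis $(c)$ only provides an abstract $R$-module decomposition $M=F\oplus D$, and the projection onto $F$ need not be multiplication by a ring element. Before Proposition \ref{p3.1}(b) can be quoted you must manufacture the idempotent, which is precisely the content of the paper's $(c)\Rightarrow(a)$ step: $F$ is a minimal submodule, one checks $F^{2}\neq(0)$, and Lemma \ref{l2.5} applies because $Ann(M)=(0)$ is nil. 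This step is the substance of that direction and is absent from your write-up.

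Your sketch of the $|U|=1$ step is also not correct as stated. You propose to argue ``as in the proof of Proposition \ref{p3.1}(a)'' that a vertex $N$ with $V(N)=V(F\oplus 0)$ satisfies $N=(N:M)M=\sqrt N$ and is minimal; but that reasoning relies on $N$ being adjacent to every other vertex (so that $N$ may be replaced by $(N:M)M$ without changing adjacencies), which is exactly what you do not have when starting from $(c)$. The correct mechanism is the displayed hypothesis of the corollary: once the idempotent is in hand, every vertex of $U$ has the form $F\oplus N_{2}$ with $N_{2}\leq D$ (submodules split along $e$ and $F$ is simple), and $V(F\oplus N_{2})=V(F\oplus 0)$ forces $V(0\oplus N_{2})=T$; since $F$ and $D$ are prime, $\cap_{P\in T}P=(0)$, so the assumption that no non-zero submodule other than $\overline{\cap_{P\in T}P}$ has $V(N)=T$ forces $N_{2}=(0)$. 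The paper sidesteps both issues by proving $(a)\Rightarrow(b)$ directly (the center eliminates any extra vertex of $U$) and $(c)\Rightarrow(a)$ via Lemma \ref{l2.5}; if you keep your ordering, both repairs above are needed.
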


\begin{proof}

$(a) \Rightarrow (b)$ Let $\bar{M}$ be a faithful module. Then
$Q=(0)$ and we have $T=Spec(M)$. By Proposition \ref{p3.1},
$M=M_{1}\oplus M_{2}$, where $M_{1}$ is a simple module and
$M_{2}$ is a prime module. Then every non-zero submodule of $M$ is
of the form $M_{1}\oplus N_{2}$ and $(0)\oplus N_{2}$, where
$N_{2}$ is a non-zero submodule of $M_{2}$. By our hypothesis, we
can not have any vertex of the form $M_{1}\oplus N_{2}$, where
$N_{2}$ is a non-zero proper submodule of $M_{2}$. Also
$M_{1}\oplus (0)$ is adjacent to every other vertex, and non of
the submodules of the form $(0)\oplus N_{2}$ can be adjacent to
each other. So $G(\tau_{Spec(M)})$ is a star graph.

$(b) \Rightarrow (c)$ This follows by Proposition \ref{p3.1} (a).

$(c)\Rightarrow (a)$ Assume that $M=F\oplus D$, where $F$ is a
simple module and $D$ is a prime module. It is easy to see that
for some minimal submodule $N$ of $M$, we have $N^{2}\neq (0)$.
Since $M$ is a faithful module, Lemma \ref{l2.5} implies that
$F\cong eM$, where $e$ is an idempotent element of $R$. Finally
Proposition \ref{p3.1} (a) completes the proof.
\end{proof}

\begin{lem}\label{l3.3}  Let $e\in R$ be an idempotent element of $R$
 and $\bar{M}$ does not have a
non-zero submodule
 $\overline{\cap_{P\in T}P}\neq \bar{N}$ with $V(N)=T$. If
$G(\tau_T)$ is a triangle-free graph, then both $\bar{M_{1}}$ and
$\bar{M_{2}}$ are prime $R$-modules. Moreover, if $G(\tau_T)$ has
no cycle, then $\bar{M_{1}}$ is a simple module and $\bar{M_{2}}$
is a prime module.
\end{lem}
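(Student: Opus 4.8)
The plan is to mimic the structure of Proposition \ref{p3.1}, using the triangle-free (resp. acyclic) hypothesis in place of the ``universal vertex'' hypothesis. Throughout I work in $\bar M$, and I use freely that $V(N)=V((N:M)M)=V(\sqrt{(N:M)M})$ and that $N,L$ adjacent in $G(\tau_T)$ forces $\sqrt{NL}=\cap_{P\in T}P$, i.e. $NL\subseteq Q$ (Remark \ref{r2.1}). The hypothesis on $\bar M$ guarantees that whenever $Q\subsetneq N$ with $V(N)\neq T$ and $V(N)\cup V(L)=T$ for some $L$, then $N$ is genuinely a vertex; this is what lets me manufacture vertices at will.

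First I would prove the triangle-free part. Since $e\neq 0,1$ is an idempotent we have $\bar M\cong\bar M_1\oplus\bar M_2$ with $Q=Q_1\oplus Q_2$, and by Proposition \ref{p2.3} every submodule splits compatibly. Observe that $M_1\oplus Q_2$ and $Q_1\oplus M_2$ are adjacent in $G(\tau_T)$ (their product lies in $Q$, and neither has $V(\cdot)=T$ since each properly contains $Q$ by the choice of $e$). Now suppose, for contradiction, that $\bar M_1$ is \emph{not} prime as an $R/(Q_1:M_1)$-module; then there are $(Q_1:M_1)\subsetneq I$ and $Q_1\subsetneq K<M_1$ with $IK\subseteq Q_1$, hence (as in the proof of Proposition \ref{p3.1}(a)) $K\cdot IM_1\subseteq Q_1$ and $(Q_1:M_1)^2M_1\subseteq K(IM_1)$. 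Then the three submodules $M_1\oplus Q_2$, $\;K\oplus M_2$, $\;IM_1\oplus Q_2$ (or a suitable variant arranged to be pairwise non-$T$ vertices) are pairwise adjacent: each pairwise product lands in $Q$ because $K(IM_1)\subseteq Q_1$, $M_1\cdot Q_2=Q_1\oplus Q_2$, etc. That is a triangle, a contradiction; so $\bar M_1$ is prime, and symmetrically $\bar M_2$ is prime.

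For the ``no cycle'' part I would additionally rule out $\bar M_1$ (say) having a proper nontrivial submodule. Suppose $Q_1\subsetneq K\subsetneq M_1$. Using that $\bar M_1,\bar M_2$ are already known to be prime, I produce a $4$-cycle: $K\oplus Q_2 \;-\; Q_1\oplus M_2 \;-\; M_1\oplus Q_2 \;-\; Q_1\oplus M_2'\,$—more carefully, one checks that $K\oplus Q_2$ and $M_1\oplus Q_2$ are \emph{both} adjacent to $Q_1\oplus M_2$ (each product is inside $Q$) while also $K\oplus Q_2$ is adjacent to $M_1\oplus Q_2$ is false in general, so instead I exhibit two internally-disjoint paths of length $2$ between $K\oplus Q_2$ and $M_1\oplus Q_2$ through distinct neighbours, or I note that $K\oplus Q_2 - Q_1\oplus M_2 - M_1\oplus Q_2 - K'\oplus M_2 - K\oplus Q_2$ closes up when $M_2$ has enough submodules; if $M_2$ is simple and $M_1$ has such a $K$, then $\{K\oplus Q_2,\ M_1\oplus Q_2,\ Q_1\oplus M_2\}$ together with any fourth vertex forced by connectedness yields a cycle. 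In all cases the acyclicity is violated, so $\bar M_1$ has no submodule strictly between $Q_1$ and $M_1$, i.e. $\bar M_1$ is simple; relabelling $e\mapsto 1-e$ if necessary we get the asserted form, with $\bar M_2$ prime by the first part.

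The main obstacle I anticipate is purely bookkeeping: arranging the three (resp. four) submodules in the splitting $\bar M_1\oplus\bar M_2$ so that they are \emph{honest distinct vertices} — i.e. each properly contains $Q$, none has $V(\cdot)=T$ (this is exactly where the standing hypothesis ``$\bar M$ has no $\bar N\neq\overline{\cap_{P\in T}P}$ with $V(N)=T$'' is used), and no two coincide — while simultaneously being pairwise adjacent. Getting the adjacencies is the routine ``product lands in $Q$'' computation already carried out in Proposition \ref{p3.1}; getting genuine distinctness and non-triviality of all the vertices in the cycle is the delicate part, and it is here that one must invoke the non-primeness (for the triangle) or the existence of the intermediate submodule $K$ (for the longer cycle) to keep the vertices apart.
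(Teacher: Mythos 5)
Your high-level plan (decompose $\bar{M}$ along $e$, extract a triangle from non-primeness, extract a cycle from non-simplicity) is exactly the paper's, but both of your explicit configurations fail. Write $V(N_1\oplus N_2)=V_{M_1}(N_1)\sqcup V_{M_2}(N_2)$, where $V_{M_i}(Q_i)=T_i$ and $V_{M_i}(M_i)=\emptyset$; two such submodules are adjacent iff the first components jointly cover $T_1$ \emph{and} the second components jointly cover $T_2$. In your triple $M_1\oplus Q_2$, $K\oplus M_2$, $IM_1\oplus Q_2$ only the pair $\{K\oplus M_2,\,IM_1\oplus Q_2\}$ passes this test: the pair $\{M_1\oplus Q_2,\,IM_1\oplus Q_2\}$ has first components $M_1$ and $IM_1$ whose $V$-sets miss $T_1$ entirely (indeed $(M_1\oplus Q_2)(IM_1\oplus Q_2)\supseteq IM_1\oplus(\cdots)$, which is not inside $Q$ because $I\not\subseteq (Q_1:M_1)$), and $V(M_1\oplus Q_2)\cup V(K\oplus M_2)=V_{M_1}(K)\sqcup T_2$ equals $T$ only if $V(K\oplus Q_2)=T$, which the standing hypothesis forbids. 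So your claim that ``each pairwise product lands in $Q$'' is simply false for two of the three pairs. The configuration that works keeps the second coordinate equal to $Q_2$ on \emph{both} special vertices and takes $Q_1\oplus M_2$ (not $M_1\oplus Q_2$) as the third vertex: $\{Q_1\oplus M_2,\ K\oplus Q_2,\ IM_1\oplus Q_2\}$ is a triangle (the mirror image of the one the paper's proof builds from $M_1\oplus K$ and $Q_1\oplus IM_2$, completed by $M_1\oplus Q_2$), after the routine checks that the three are distinct vertices with $V\neq T$.

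The acyclic part has a more serious conceptual error. You try to force a cycle from the existence of a single intermediate submodule $Q_1\subsetneq K\subsetneq M_1$ and conclude ``in all cases the acyclicity is violated.'' That cannot be right: the lemma's conclusion permits $\bar{M_2}$ to be prime but not simple, and in that situation $G(\tau_T)$ is a star (Corollary \ref{c3.9}) and has no cycle, so a symmetric run of your argument would ``prove'' that both factors are simple, which is false. Your own text betrays the problem (``is false in general, so instead\dots'', ``any fourth vertex forced by connectedness yields a cycle'' --- connectedness of a tree forces no cycle). The correct dichotomy requires non-simplicity on \emph{both} sides: if neither $\bar{M_1}$ nor $\bar{M_2}$ is simple, choose $Q_i\subsetneq N_i\subsetneq M_i$ for $i=1,2$ and verify that
$$N_1\oplus Q_2\ -\ Q_1\oplus N_2\ -\ M_1\oplus Q_2\ -\ Q_1\oplus M_2\ -\ N_1\oplus Q_2$$
is a $4$-cycle (each consecutive pair covers $T_1$ in the first coordinate and $T_2$ in the second); this is the paper's argument, and no cycle can be manufactured when only one side is non-simple.
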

\begin{proof}
Without loss of generality, we can assume that $\bar{M_{1}}$ is a
prime module. Then $\bar{I}\bar{K}= (\bar{0})$, where
$(Q_{2}:M_{2})\subsetneq I<R$ and $Q_{2}\subsetneq K< M$. It
follows that $V(M_{1}\oplus K) \cup V(Q_{1}\oplus
IM_{2})=V(Q_{1}\oplus K(IM_{2}))=T$ (if $IM_{2}=K$, then
$V(Q_{1}\oplus K)=V(Q_{1}\oplus K^{2})=V(Q_{1}\oplus
K(IM_{2}))=T$, a contradiction). So both $\bar{M_{1}}$ and
$\bar{M_{2}}$ are prime $R$-modules. Now suppose that $G(\tau_T)$
has no cycle. If none of $\bar{M_{1}}$ and $\bar{M_{2}}$ is a
simple module, then we choose non-trivial submodules $N_{i}$ in
$M_{i}$ for some $i = 1, 2$. So $N_{1} \oplus Q_{2}$, $Q_{1}
\oplus N_{2}$, $M_{1} \oplus Q_{2}$, and $Q_{1} \oplus M_{2}$ form
a cycle, a contradiction.
\end{proof}

\begin{cor}\label{c3.9}
Assume that $M$ is a multiplication module or a primeful module
and $\bar{M}$ does not have a non-zero submodule
 $\overline{\cap_{P\in T}P}\neq \bar{N}$ with $V(N)=T$. Then $G(\tau_T)$ is a star graph if and
 only if $\bar{M_{1}}$ is a simple module and $\bar{M_{2}}$ is a prime
module for some idempotent $e\in R$.
\end{cor}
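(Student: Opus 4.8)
The plan is to obtain Corollary~\ref{c3.9} by combining the structural results already proved in this section with the additional rigidity furnished by the hypothesis that $M$ is a multiplication module or a primeful module. One direction is immediate: if $\bar{M_{1}}$ is simple and $\bar{M_{2}}$ is a prime module for some nontrivial idempotent $e\in R$, then Proposition~\ref{p3.1}(b) already tells us $G(\tau_T)$ is a complete bipartite graph, and a closer look at its proof (where $U=\{N:V(N)=V(M_1\oplus Q_2)\}$ and $V=\{K:V(K)=V(Q_1\oplus M_2)\}$) together with the standing hypothesis ``$\bar M$ has no nonzero $\bar N\neq\overline{\cap_{P\in T}P}$ with $V(N)=T$'' forces one of the two parts to be a singleton, exactly as in the argument $(a)\Rightarrow(b)$ of Corollary~\ref{c3.2}. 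So the work is all in the forward direction.

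For the forward direction, assume $G(\tau_T)$ is a star graph. Then in particular $G(\tau_T)$ has no cycle, so Lemma~\ref{l3.3} (its ``moreover'' clause) applies \emph{provided} we already know $R$ carries a nontrivial idempotent $e$ with the associated $\bar{M_{1}},\bar{M_{2}}$ in play. The first key step, therefore, is to produce such an idempotent. Here I would run the argument in the proof of Proposition~\ref{p3.1}(a): the center $N$ of the star is adjacent to every other vertex, so $N=(N:M)M=\sqrt{N}$, and the ``no extra $V(\cdot)=T$ submodule'' hypothesis forces $\bar N$ to be a minimal submodule of $\bar M$ with $(\bar N)^2\neq(\bar 0)$. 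The second key step is to upgrade Lemma~\ref{l2.5} to produce the idempotent: Lemma~\ref{l2.5} requires $Ann(\bar M)$ (equivalently $Ann_{R/(Q:M)}(\bar M)$) to be a nil ideal, and this is precisely where ``$M$ multiplication or primeful'' enters. For a multiplication module $\bar M$, $\bar N$ minimal with $\bar N^2\neq 0$ already gives $\bar N=\bar e\bar M$ directly from the multiplication structure (an idempotent prime-to-annihilator situation, liftable via Lemmas~\ref{l2.2} and~\ref{l2.4}); for a primeful module one argues via the natural map $\psi$ being surjective, so that minimal primes of $\bar M$ correspond to minimal primes of $R/Ann(\bar M)$ and the decomposition descends. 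Either way we get $\bar M\cong\overline{eM}\oplus\overline{(e-1)M}$ for a nontrivial idempotent $e$.

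Once the idempotent is in hand, the third step is purely graph-theoretic plus Lemma~\ref{l3.3}: since $G(\tau_T)$ is a star it is acyclic, so by Lemma~\ref{l3.3} one of $\bar{M_{1}},\bar{M_{2}}$ is simple and the other is prime; relabelling, $\bar{M_{1}}$ is simple and $\bar{M_{2}}$ is prime, which is exactly the desired conclusion. I expect the main obstacle to be the second step --- verifying that the two hypotheses ``multiplication'' and ``primeful'' each genuinely force the relevant annihilator to be nil (or otherwise force the idempotent to exist), so that Lemma~\ref{l2.5} is legitimately applicable; the multiplication case should be essentially formal from $\bar N=(\bar N:\bar M)\bar M$ and minimality, while the primeful case needs the correspondence between $Min(\bar M)$ and $Min(R/Ann(\bar M))$ from \cite{lu07} and a short check that a complemented minimal prime submodule yields a complemented idempotent. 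The remaining steps (acyclicity of a star, the two-part bipartite collapse to a star) are routine and parallel the already-written proofs of Corollary~\ref{c3.2} and Proposition~\ref{p3.1}.
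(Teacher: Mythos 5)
Your overall architecture (forward direction via the center of the star and Proposition \ref{p3.1}(a); converse via Proposition \ref{p3.1}(b) plus a collapse of one part to a singleton) matches the paper's, but you have placed the hypothesis ``$M$ is multiplication or primeful'' in the wrong half of the argument, and this leaves a genuine gap in your converse direction. In the paper the forward direction is one line: the center of a star is adjacent to every other vertex, so Proposition \ref{p3.1}(a) applies verbatim and already yields the idempotent $e$ together with ``$\bar{M_{1}}$ simple, $\bar{M_{2}}$ prime''; no appeal to Lemma \ref{l3.3} and no use of the multiplication/primeful hypothesis is made there. (Your worry about the nil hypothesis of Lemma \ref{l2.5} is a fair comment on the proof of Proposition \ref{p3.1}(a) itself, but that proposition is stated without such a hypothesis, so the corollary simply cites it; your detour through Lemma \ref{l3.3} is harmless but redundant.)

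The real work, and the real use of ``multiplication or primeful'', is in the converse. After Proposition \ref{p3.1}(b) gives the complete bipartite graph with parts $U=\{N:\ V(N)=V(M_{1}\oplus Q_{2})\}$ and $V=\{K:\ V(K)=V(Q_{1}\oplus M_{2})\}$, you claim one part collapses to a singleton ``exactly as in $(a)\Rightarrow(b)$ of Corollary \ref{c3.2}'', i.e.\ from the standing hypothesis that no nonzero $\bar{N}\neq\overline{\cap_{P\in T}P}$ has $V(N)=T$. That argument does not transfer. The obstruction to $|U|=1$ is a submodule $N_{1}$ with $Q_{1}\neq N_{1}<M_{1}$ and $V(N_{1}\oplus Q_{2})=V(M_{1}\oplus Q_{2})$, which amounts to $\sqrt{(N_{1}:M_{1})M_{1}}=M_{1}$ (equivalently $V(N_{1})=\emptyset$); such an $N_{1}\oplus Q_{2}$ is a second vertex in $U$, adjacent to $Q_{1}\oplus M_{2}$, yet it satisfies $V(N_{1}\oplus Q_{2})\neq T$, so the ``no extra $V(\cdot)=T$'' hypothesis says nothing about it. (Corollary \ref{c3.2} escapes this because there $\bar{M}$ is faithful and $M_{1}$ itself is simple, so no such $N_{1}$ exists.) The paper kills exactly this configuration with the multiplication/primeful hypothesis, via the fact that $\sqrt{(N:M)M}\neq M$ for every proper submodule $N$ of a multiplication or primeful module. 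You need to insert this step; without it the collapse to a star is unjustified.
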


\begin{proof}
First we note that if $\bar{M}$ is a multiplication module, then
for any non-zero submodule $\bar{N}$ of $\bar{M}$, we have
$V(N)\neq T$. The necessity is clear by Proposition \ref{p3.1}
(a). For the converse, assume that $\bar{M}=\bar{M_{1}} \oplus
\bar{M_{2}}$, where $\bar{M_{1}}$ is a simple module and
$\bar{M_{2}}$ is a prime for some idempotent $e\in R$. Using the
Proposition \ref{p3.1} (b), $G(\tau_T)$ is a complete bipartite
graph with two parts $U$ and $V$ such that $N\in U$ if and only if
$V(N)=V(M_{1}\oplus Q_{2})$ and $K\in V$ if and only if
$V(K)=V(Q_{1}\oplus M_{2})$. We claim that $|U|=1$. Otherwise,
$V(M_{1} \oplus Q_{2})=V(N_{1} \oplus Q_{2})$, where $Q_{1}\neq
N_{1}< M_{1}$. It follows that $\sqrt{(N_{1}:M_{1})M_{1}}=M_{1}$,
a contradiction (note that if $M$ is a multiplication module or a
primeful module, then $\sqrt{(N:M)M}\neq M$, where $N< M$). So
$G(\tau_T)$ is a star graph.
\end{proof}

\begin{thm}\label{t3.4}
If $G(\tau_T)$ is a tree, then $G(\tau_T)$ is a star graph.
\end{thm}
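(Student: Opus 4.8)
The plan is to argue by contradiction: assume $G(\tau_T)$ is a tree but not a star graph. A tree that is not a star must contain a path on four vertices, i.e. vertices $N_1 - N_2 - N_3 - N_4$ with no other edges among them (in particular $N_1 \not\sim N_3$, $N_1 \not\sim N_4$, $N_2 \not\sim N_4$). The first step is to extract the algebraic consequences of these adjacencies and non-adjacencies via Remark \ref{r2.1}: each edge $N_i - N_{i+1}$ gives $V(N_i) \cup V(N_{i+1}) = T$ and $\sqrt{N_i N_{i+1}} = \cap_{P\in T}P$, while each non-edge $N_i - N_j$ (with both vertices present) forces $V(N_i) \cup V(N_j) \subsetneq T$.

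Next I would pass to $\bar{M} = M/Q$, where $Q = (\cap_{P\in T}P : M)M$, and exploit the fact that a tree is in particular triangle-free and acyclic, so that Lemma \ref{l3.3} applies once we know $\bar M$ has no non-zero submodule $\bar N \neq \overline{\cap_{P\in T}P}$ with $V(N) = T$ — but that hypothesis is \emph{not} assumed here, so the real work is to handle the case where such an ``extra'' submodule exists. So the argument splits: if $\bar M$ has no such submodule, then since $G(\tau_T)$ is acyclic, Lemma \ref{l3.3} gives an idempotent $e$ with $\bar{M_1}$ simple and $\bar{M_2}$ prime; then the description of submodules of a direct sum (Proposition \ref{p2.3}) shows every vertex has $V$-value either $V(M_1 \oplus Q_2)$ or $V(Q_1 \oplus M_2)$, hence $G(\tau_T)$ is complete bipartite, and being a tree forces one part to be a singleton, i.e. a star. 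If on the other hand such an ``extra'' submodule $N_0$ with $V(N_0) = T$ and $\bar N_0 \neq \bar 0$ exists, I would show it can be absorbed: for any vertex $L$ and any neighbor $L'$ of $L$, the submodule $L + N_0$ satisfies $V(L + N_0) = V(L) \cap V(N_0) \supseteq$ nothing useful — instead use $V((L\cap N_0)) \supseteq V(L)\cup V(N_0)$; more directly, $N_0 L' \subseteq N_0 \cap L'$ and $V(N_0 L') = V(N_0)\cup V(L') = T\cup V(L') = T$, so actually $N_0$ is adjacent to every vertex it is distinct from, or else collapses into the center. This shows that in the non-star tree a vertex of the $N_1-N_2-N_3-N_4$ path adjacent to such an $N_0$ would gain extra edges, contradicting the path structure; the upshot is that $G(\tau_T)$ again reduces to the complete-bipartite situation, and a complete bipartite tree is a star.

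The cleanest route, which I would actually write up, is: from the path $N_1-N_2-N_3-N_4$ produce a triangle or a $4$-cycle directly. Indeed, consider $N_2$ and $N_3$, which are adjacent, so $V(N_2)\cup V(N_3) = T$ with $V(N_2), V(N_3) \neq T$. Since $V(N_1)\cup V(N_2) = T$, combining gives $V(N_1 \cap N_3) \supseteq$? Better: I expect the key identity to be that $V(N_2)\cap V(N_3)$ or the submodules $N_2 N_3$, $N_1 N_3$ behave so that $N_1$, $N_2$, $N_3$ together with $N_1 \cap N_3$ (or $N_1 + N_3$, whichever is again a proper submodule with the right $V$-set) form a cycle. \textbf{The main obstacle} is precisely controlling this third auxiliary submodule: one must verify it is a genuine vertex of $G(\tau_T)$ — proper, not equal to the others, with its $V$-set neither equal to $T$ nor making it fail the vertex condition — and this is where the hypothesis about the ``extra'' submodule with $V(N) = T$ (used throughout Section 3) must be quietly re-derived or circumvented, since Theorem \ref{t3.4} omits it. I would handle that by noting a tree has at least two leaves and at most one vertex of degree $\geq 2$ adjacent to all others is impossible unless it is a star, then invoking Corollary \ref{c3.9}-style reasoning after establishing the direct-sum decomposition from acyclicity via Lemma \ref{l3.3}, treating the exceptional submodules as forced into the center of the resulting star.
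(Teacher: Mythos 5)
Your proposal correctly reduces the problem to an induced path $N_1 - N_2 - N_3 - N_4$ inside a tree that is not a star, and correctly senses that the contradiction should come from manufacturing an auxiliary submodule that closes a cycle. But you never actually produce that submodule: you explicitly label its construction and verification ``the main obstacle'' and leave it unresolved, and the candidates you float ($N_2N_3$, $N_1N_3$, $N_1\cap N_3$, $N_1+N_3$) are not the right objects. The missing idea is to take the product of the two \emph{endpoints}, $N_1N_4$ (in the paper's notation $L_pK_q$ with $L_p\in N(L)\setminus\{K\}$ and $K_q\in N(K)\setminus\{L\}$). By Remark \ref{r2.1}, $V(N_1N_4)=V(N_1)\cup V(N_4)$, which is $\neq T$ because $N_1$ and $N_4$ are non-adjacent in a tree, while $V(N_1N_4)\cup V(N_2)=V(N_1)\cup V(N_2)\cup V(N_4)=T$ and similarly with $N_3$; so $N_1N_4$ is a genuine vertex adjacent to both interior vertices of the path --- no extra hypothesis about submodules with $V(N)=T$ is needed, which is exactly why the theorem can dispense with the standing assumption of Section 3. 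Since $diam(G(\tau_T))\leq 3$ by \cite[Theorem 3.4]{ah14}, every vertex lies in $\{N_2,N_3\}\cup N(N_2)\cup N(N_3)$, and each possible identification of $N_1N_4$ with one of these vertices is contradictory (e.g.\ $N_1N_4=N_2$ forces $V(N_2)=T$; $N_1N_4$ equal to another neighbour of $N_2$ forces that neighbour to be adjacent to $N_3$ as well, creating a cycle). That case analysis is the entire content of the paper's proof, and it is absent from yours.

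The detours you sketch do not repair this. The route through Lemma \ref{l3.3} requires the hypothesis that $\bar M$ has no non-zero submodule $\bar N\neq \overline{\cap_{P\in T}P}$ with $V(N)=T$, which Theorem \ref{t3.4} does not assume (as you yourself note), and your attempted workaround via a submodule $N_0$ with $V(N_0)=T$ is incoherent: such an $N_0$ is by definition not a vertex of $G(\tau_T)$, since vertices must satisfy $V(N)\neq T$, so the claim that it ``is adjacent to every vertex'' has no meaning. As written, the proposal is a plan with the decisive step missing, not a proof.
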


\begin{proof}
Suppose that $G(\tau_T)$ is not a star graph. Then $G(\tau_T)$ has
at least four vertices. Obviously, there are two adjacent vertices
$L$ and $K$ of $G(\tau_T)$ such that $|N(L)\setminus \{K\}|\geq 1$
and $|N(K)\setminus \{L\}|\geq 1$. Let $N(L)\setminus \{K\} =
\{L_{i}\}_{i\in \Lambda}$ and $N(K)\setminus \{L\} =
\{K_{j}\}_{j\in \Gamma}$. Since $G(\tau_T)$ is a tree, we have
$N(L)\cap N(K)=\emptyset$. By \cite[Theorem 3.4]{ah14},
$diam(G(\tau_T))\leq 3$. So every edge of $G(\tau_T)$ is of the
form $\{L, K\}$, $\{L, L_{i}\}$ or $\{K, K_{j}\}$, for some $i\in
\Lambda$ and $j\in \Gamma$. Now, Pick $p\in \Lambda$ and $q\in
\Gamma$. Since $G(\tau_T)$ is a tree, $L_{p}K_{q}$ is a vertex of
$G(\tau_T)$. If $L_{p}K_{q}=L_{u}$ for some $u\in \Lambda$, then
$V(KL_{u})=T$, a contradiction. If $L_{p}K_{q}=K_{v}$, for some
$v\in \Gamma$, then $V(LK_{v})=T$, a contradiction. If
$L_{p}K_{q}=L$ or $L_{p}K_{q}=K$, then $V(L^{2}) =T$ or $V(K^{2})
=T$, respectively and hence $V(L) =T$ or $V(K) =T$, a
contradiction. So the claim is proved.
\end{proof}
\begin{thm}\label{t3.5}
Let $R$ be an Artinian ring and let $M$ be a multiplication or a
primeful module. If $G(\tau_T)$ is a bipartite graph, then $|T|=2$
and $G(\tau_T)\cong K_{2}$.
\end{thm}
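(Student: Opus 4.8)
The plan is to reduce to the structural results already established for star graphs and then exploit the extra hypothesis that $R$ is Artinian. First I would observe that since $R$ is Artinian, $Ann(\bar M)$ (equivalently $(Q:M)/\mathrm{Ann}(M)$) is a nil ideal — indeed in an Artinian ring the Jacobson radical is nilpotent, and over an Artinian ring every prime ideal is maximal so the nilradical equals the Jacobson radical; hence $Nil(R)$ is nilpotent and any nil ideal is a subset of it. This makes Lemma \ref{l2.5} and Lemma \ref{l2.4} available for $\bar M$ and its minimal submodules. Next I would use that a bipartite graph is in particular triangle-free; the only point requiring care is the side-hypothesis of Lemma \ref{l3.3}, namely that $\bar M$ has no non-zero submodule $\bar N \neq \overline{\cap_{P\in T}P}$ with $V(N)=T$. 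When $M$ is a multiplication module this is automatic (as noted in the proof of Corollary \ref{c3.9}: for any non-zero $\bar N \leq \bar M$ one has $V(N)\neq T$), and when $M$ is primeful one has $\sqrt{(N:M)M}\neq M$ for $N<M$, which together with Remark \ref{r2.1} rules out such an $\bar N$; so in either case the hypothesis of Lemma \ref{l3.3} is met. But Lemma \ref{l3.3} is stated ``for an idempotent $e\in R$'' — so before applying it I must produce a non-trivial idempotent, i.e. a decomposition $\bar M \cong \bar M_1 \oplus \bar M_2$. This comes from the argument in Proposition \ref{p3.1}(a)/Lemma \ref{l3.3}: a minimal submodule of $\bar M$ with non-zero square gives, via Lemma \ref{l2.5}, an idempotent splitting. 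One checks $G(\tau_T)\neq\emptyset$ (it is bipartite, hence it has at least one edge, so it is non-empty), whence $\bar M$ is not prime and not simple, and a minimal submodule $\bar N$ with $\bar N^2 \neq 0$ exists.

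Having the splitting $\bar M \cong \bar M_1 \oplus \bar M_2$ and triangle-freeness, Lemma \ref{l3.3} gives that both $\bar M_1$ and $\bar M_2$ are prime $R$-modules, and then Proposition \ref{p3.1}(b) shows $G(\tau_T)$ is a complete bipartite graph with parts $U=\{N : V(N)=V(M_1\oplus Q_2)\}$ and $V'=\{K : V(K)=V(Q_1\oplus M_2)\}$. Now I invoke the multiplication/primeful hypothesis exactly as in the proof of Corollary \ref{c3.9}: if $|U|\geq 2$ then $V(M_1\oplus Q_2)=V(N_1\oplus Q_2)$ for some $Q_1\neq N_1 < M_1$, forcing $\sqrt{(N_1:M_1)M_1}=M_1$, which is impossible for a multiplication or primeful module. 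Hence $|U|=1$, and symmetrically $|V'|=1$, so $G(\tau_T)$ has exactly two vertices and one edge, i.e. $G(\tau_T)\cong K_2$.

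It remains to pass from $G(\tau_T)\cong K_2$ to $|T|=2$. The single edge gives adjacent $N,L$ with $V(N)\cup V(L)=T$ and $V(N),V(L)\subsetneq T$; since $\bar M_1$ and $\bar M_2$ are prime, $\bar M/Q$ has exactly two minimal primes, namely $M_1\oplus Q_2$ and $Q_1\oplus M_2$ (using Proposition \ref{p2.3}(e) to describe primes of a direct sum, and that a prime module has a unique minimal prime submodule, its zero submodule after reduction). One then argues that $T=V(\cap_{P\in T}P)$ (true since $G(\tau_T)\neq\emptyset$, by Remark \ref{r2.1}) has precisely these two points: every $P\in T$ contains $Q$, so corresponds to a prime of $\bar M$ lying over $\bar M_1\oplus Q_2$ or $Q_1\oplus M_2$; minimality of these two primes together with the fact that for a prime module every submodule containing $0$ and being prime ``above'' it coincides with a minimal one — more precisely, in the Artinian multiplication/primeful setting $\bar M_i$ is itself simple-or-has-no-room — pins $|T|=2$. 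I expect the main obstacle to be exactly this last step: verifying cleanly that the two minimal primes are the \emph{only} members of $T$, rather than just the only minimal ones. To handle it I would note that if $|U|=|V'|=1$ then in fact any proper non-zero submodule of $\bar M_i$ would create an extra vertex (as in Corollary \ref{c3.9}'s $|U|=1$ argument pushed one step further), forcing each $\bar M_i$ to be simple; two simple summands give $|Spec(\bar M)|=2$ and hence $|T|=2$. Assembling these observations yields $|T|=2$ and $G(\tau_T)\cong K_2$, completing the proof.
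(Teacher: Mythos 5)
There is a genuine gap, and it sits exactly where you predicted the work would be easiest: the production of the non-trivial idempotent. You claim that $R$ Artinian forces $Ann(\bar M)$ to be a nil ideal; the argument you give only shows that nil ideals of an Artinian ring are nilpotent, not that $Ann(\bar M)$ is nil, and the claim is false in general (take $R=k\times k'$ and $M=k\times 0$, a cyclic hence multiplication module with $Ann(M)=0\times k'$, which contains a non-trivial idempotent and no non-zero nilpotents). So Lemma \ref{l2.5} and Lemma \ref{l2.4} are not available as you invoke them. Second, even granting nilness, you assert that a minimal submodule $\bar N$ of $\bar M$ with $\bar N^2\neq(\bar 0)$ exists, but never prove it: in Proposition \ref{p3.1}(a) that existence is extracted from the assumed universal vertex (the center of the star) together with the no-$V(N)=T$ hypothesis, neither of which you have at this stage; you have not even shown that $\bar M$ possesses a minimal submodule (that would require $\bar M$ Artinian, which is not immediate since $M$ is not assumed finitely generated). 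A smaller quibble: a bipartite graph need not have an edge, so ``bipartite hence non-empty'' is backwards; like the paper, you should simply treat the non-empty case, since the conclusion is vacuous otherwise.

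The paper obtains the idempotent by a cheaper, purely ring-theoretic route that you should adopt: since $G(\tau_T)\neq\emptyset$, $R$ cannot be local (otherwise $T=V(mM)$ for the maximal ideal $m$, and \cite[Remark 2.6]{ah14} forces $mM=M$, i.e.\ $T=\emptyset$); hence by the structure theorem for Artinian rings $R=R_1\oplus\cdots\oplus R_n$ with each $R_i$ Artinian local and $n\geq 2$, and bipartiteness forces $n=2$, which via Lemma \ref{l2.2} and Proposition \ref{p2.3} yields $\bar M\cong\bar M_1\oplus\bar M_2$ for an idempotent $e\in R$. From that point on your argument essentially coincides with the paper's: both use that a prime $\bar M_i$ over an Artinian ring is a vector space over the field $R/Ann(\bar M_i)$, the $|U|=1$ argument from Corollary \ref{c3.9} (where the multiplication/primeful hypothesis enters through $\sqrt{(N:M)M}\neq M$), and the resulting simplicity of the summands to conclude $|T|=2$ and $G(\tau_T)\cong K_2$. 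If you swap in the ring-decomposition step, the remainder of your proposal stands.
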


\begin{proof}
First we may assume that $G(\tau_T)$ is not empty. Then $R$ can
not be a local ring. Otherwise, $T=V(mM)$, where $m$ is the unique
maximal ideal of $R$. Therefore \cite[Remark 2.6]{ah14} implies
that $mM=M$ and hence $T$ is empty, a contradiction. Hence by
\cite[Theorem 8.9]{ati69}, $R = R_{1}\oplus \ldots \oplus R_{n}$,
where $R_{i}$ is an Artinian local ring for $i=1, \ldots , n$ and
$n\geq 2$. By Lemma \ref{l2.2} and Proposition \ref{p2.3}, since
$G(\tau_T)$ is a bipartite graph, we have $n=2$ and hence
$\bar{M}\cong \bar{M_{1}}\oplus \bar{M_{2}}$ for some idempotent
$e\in R$. If $\bar{M_{1}}$ is a prime module, then it is easy to
see that $\bar{M_{1}}$ is a vector space over $R/Ann(\bar{M_{1}})$
and so is a semisimple $R$-module. A Similar argument as we did in
proof of Corollary \ref{c3.9} implies that $|T|=2$ and
$G(\tau_T)\cong K_{2}$.
\end{proof}

\begin{prop}\label{p3.6} Assume that $M$ is a multiplication module and
$Ann(\bar{M})$ is a nil ideal of $R$.

\begin{itemize}
\item [(a)] If $G(\tau_T)$ is a finite bipartite graph, then
$|T|=2$ and $G(\tau_T)\cong K_{2}$. \item [(b)] If $G(\tau_T)$ is
a regular graph of finite degree, then $|T|=2$ and $G(\tau_T)\cong
K_{2}$.
\end{itemize}
\end{prop}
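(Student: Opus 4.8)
The plan is to leverage the structural results already available—specifically Lemma \ref{l2.5} (minimal submodules split off idempotents when $Ann$ is nil), Lemma \ref{l2.4} (lifting idempotents modulo nil ideals), and the analysis of bipartite graphs in Proposition \ref{p3.1} and Corollary \ref{c3.9}. For part (a), I would first observe that since $M$ is a multiplication module, for any non-zero submodule $\bar N$ of $\bar M$ we have $V(N)\neq T$; in particular, the standing hypothesis "$\bar M$ has no non-zero submodule $\overline{\cap_{P\in T}P}\neq\bar N$ with $V(N)=T$" is automatically satisfied, so Proposition \ref{p3.1} and Corollary \ref{c3.9} apply. Since $G(\tau_T)$ is bipartite it is in particular triangle-free, so Lemma \ref{l3.3} would give that for every idempotent $e$, the factors $\bar M_1$ and $\bar M_2$ are prime $R$-modules—but I must first produce a nontrivial idempotent. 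This is where $Ann(\bar M)$ nil enters: I would pick an adjacent pair $N,L$ in $G(\tau_T)$, pass to $\bar M$, and use Proposition \ref{p2.6}(a) together with Lemma \ref{l2.5} to extract an idempotent $e\in R$ splitting a minimal submodule of $\bar M$ with nonzero square (the minimal submodule arising as in the proof of Proposition \ref{p3.1}(a), by finiteness of the graph one can descend to a minimal vertex inside an appropriate submodule). Once $\bar M\cong\bar M_1\oplus\bar M_2$ with both factors prime, Proposition \ref{p3.1}(b) shows $G(\tau_T)$ is complete bipartite with parts determined by $V(M_1\oplus Q_2)$ and $V(Q_1\oplus M_2)$; finiteness then forces each part to be a singleton exactly as in the proof of Corollary \ref{c3.9} (if $V(N_1\oplus Q_2)=V(M_1\oplus Q_2)$ for $Q_1\neq N_1<M_1$ then $\sqrt{(N_1:M_1)M_1}=M_1$, impossible for a multiplication module), giving $G(\tau_T)\cong K_2$ and $|T|=2$.

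For part (b), the key extra input is that a finite-degree regular graph which is connected—and $G(\tau_T)$ is connected by \cite[Theorem 3.4]{ah14}—is finite: indeed if $\deg(v)=k$ for all $v$, then by connectedness every vertex is within distance $\mathrm{diam}\leq 3$ of a fixed vertex, and a ball of radius $3$ in a $k$-regular graph has at most $1+k+k(k-1)+k(k-1)^2$ vertices. Hence $G(\tau_T)$ is a finite bipartite graph? Not immediately—regular does not give bipartite. Instead I would argue directly: finiteness via the ball estimate means Lemma \ref{l2.9}-type reasoning applies and the module $\bar M$ has finite length, so $\bar M$ has a minimal submodule; by Lemma \ref{l2.5} and $Ann(\bar M)$ nil, either that minimal submodule squares to zero (making it a vertex joined to nearly everything, forcing structure) or it splits off an idempotent. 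In the idempotent case $\bar M\cong\bar M_1\oplus\bar M_2$. The regularity condition is rigid: if either factor has a nontrivial submodule $N_i$, the four vertices $N_1\oplus Q_2$, $Q_1\oplus M_2$, $M_1\oplus Q_2$, $Q_1\oplus N_2$ (when both have nontrivial submodules) or a careful degree count (when only one does) shows the degrees cannot all coincide unless everything collapses; in fact the clean conclusion is that both $\bar M_i$ are prime and each "side" is a single vertex, so $G(\tau_T)\cong K_2$. I would run the degree bookkeeping to rule out larger complete bipartite graphs $K_{m,n}$ (which are regular only when $m=n$, but then the two sides sit inside modules that cannot support $\geq 2$ submodules with equal $V(\cdot)$, again by the multiplication hypothesis).

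The main obstacle I anticipate is the very first step in each part: manufacturing a nontrivial idempotent $e\in R$ from the graph-theoretic hypothesis. The bridge is Lemma \ref{l2.5}, which requires a \emph{minimal} submodule of $\bar M$ with nonzero square; producing such a submodule needs both the finiteness (to descend in a chain, via the acc/dcc discussion around Theorem \ref{t2.10} and Lemma \ref{l2.9}) and a witness of non-primeness of $\bar M$ (available from \cite[Remark 2.6]{ah14} since $G(\tau_T)\neq\emptyset$). Once the idempotent is in hand the argument is essentially a rerun of Corollary \ref{c3.9} and Theorem \ref{t3.5}, with "$R$ Artinian" replaced throughout by "$Ann(\bar M)$ nil plus $G(\tau_T)$ finite (resp. regular of finite degree)" as the source of the needed chain conditions. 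A secondary subtlety in (b) is confirming that a regular $G(\tau_T)$ really is finite and that regularity genuinely forbids $K_{m,m}$ with $m\geq 2$ in this setting; this is where the multiplication hypothesis does its final work.
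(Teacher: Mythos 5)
Your overall architecture matches the paper's: chain conditions force $\bar{M}$ to have finite length, hence $R/Ann(\bar{M})$ is Artinian; a nontrivial idempotent splits $\bar{M}\cong\bar{M_1}\oplus\bar{M_2}$; and the conclusion $G(\tau_T)\cong K_2$ is extracted exactly as in Corollary \ref{c3.9} (for (a)) and by the degree comparison $deg(Q_1\oplus M_2)>deg(N\oplus M_2)$ (for (b)). The one place you genuinely diverge is the production of the idempotent. The paper does not go through a minimal submodule of $\bar{M}$ at all: it shows $R/Ann(\bar{M})$ is a non-local Artinian ring (as in Theorem \ref{t3.5}), decomposes it into local factors via \cite[Theorem 8.9]{ati69} and Lemma \ref{l2.2} to get orthogonal idempotents modulo the nil ideal $Ann(\bar{M})$, and lifts one with Lemma \ref{l2.4}. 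Your route via Lemma \ref{l2.5} is workable but, as written, has a gap: Lemma \ref{l2.5} leaves open the alternative $\bar{N}^{2}=(\bar{0})$, and your parenthetical ``making it a vertex joined to nearly everything, forcing structure'' is not an argument. You cannot borrow the minimal submodule of Proposition \ref{p3.1}(a) either, since that one comes from a vertex adjacent to all others, which you do not have here. The fix is the observation already used in Corollary \ref{c3.9}: for a multiplication module, $V(N)\neq T$ for every non-zero $\bar{N}$, whereas $\bar{N}^{2}=(\bar{0})$ would give $V(N)=V(N^{2})\supseteq V(Q)=T$; so the square-zero branch is vacuous and Lemma \ref{l2.5} always yields the idempotent. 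State this explicitly and your version of the step closes.

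Two smaller remarks. In (a) you invoke Lemma \ref{l3.3} to make both factors prime before applying Proposition \ref{p3.1}(b); the paper skips this and goes straight to the Corollary \ref{c3.9} singleton argument, but your detour is harmless. In (b), your worry about ruling out $K_{m,m}$ with $m\geq 2$ is resolved in the paper not by any complete-bipartite structure (which you rightly note is unavailable without triangle-freeness) but by the direct degree count: if $\bar{M_1}$ had a nontrivial submodule $N$, then $Q_1\oplus M_2$ would have strictly larger degree than $N\oplus M_2$, contradicting regularity, so $\bar{M_1}$ is simple; you gesture at exactly this bookkeeping, so your plan is consistent with the paper's once made precise. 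Your ball-of-radius-$3$ finiteness argument for a connected regular graph is a clean substitute for the paper's appeal to the Theorem \ref{t2.10}/Lemma \ref{l2.9} machinery.
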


\begin{proof}
$(a)$ By Theorem \ref{t2.10}, $\bar{M}$ is an Artinian and
Noetherian module so that $R/Ann(\bar{M})$ is an Artinian ring. A
similar arguments in Theorem \ref{t3.5} says that,
$R/Ann(\bar{M})$ is a non-local ring. So by \cite[Theorem
8.9]{ati69} and Lemma \ref{l2.2}, there exist pairwise orthogonal
idempotents modulo $Ann(\bar{M})$. By lemma \ref{l2.4},
$\bar{M}\cong \bar{M_{1}}\oplus \bar{M_{2}}$, for some idempotent
$e$ of $R$. Now, the proof that $G(\tau_T)\cong K_{2}$ is similar
to the proof of Corollary \ref{c3.9}.

$(b)$ We may assume that $G(\tau_T)$ is not empty. So $\bar{M}$ is
not a prime module by \cite[Remark 2.6]{ah14} and a similar manner
in proof of Theorem \ref{t2.10}, shows that $\bar{M}$ has a finite
length so that $R/Ann(\bar{M})$ is an Artinian ring. As in the
proof of part (a), $\bar{M}\cong \bar{M_{1}}\oplus \bar{M_{2}}$
for some idempotent $e\in R$. If $\bar{M_{1}}$ has one non-trivial
submodule $N$, then $deg(Q_{1} \oplus M_{2}) > deg(N \oplus
M_{2})$ (we note that by \cite[Proposition 2.5]{ah162}, $\bar{N}
\bar{K}=(\bar{0})$ for some $(\bar{0})\neq \bar{K}< \bar{M_{1}}$)
and this contradicts the regularity of $G(\tau_T)$. Hence
$\bar{M_{1}}$ is a simple module. Finally a similar argument as we
have seen in Corollary \ref{c3.9} gives $G(\tau_T)\cong K_{2}$.
\end{proof}

\begin{thm}\label{t3.7}
Assume that $\bar{M}$ does not have a non-zero submodule
 $\overline{\cap_{P\in T}P}\neq \bar{N}$ with $V(N)=T$, $Ann(\bar{M})$ is a nil ideal,
  and $|Min(\bar{M})|\geq 3$.
 Then $G(\tau_T)$ contains a cycle.
\end{thm}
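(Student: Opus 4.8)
The plan is to exploit the hypothesis $|Min(\bar M)|\geq 3$ to produce three distinct minimal prime submodules of $\bar M$ and then lift them to three vertices of $G(\tau_T)$ that, together with one more vertex, close up into a cycle. First I would observe that, since $Ann(\bar M)$ is a nil ideal and $\bar M$ has at least three minimal prime submodules, $\bar M$ cannot be a prime module, so by \cite[Remark 2.6]{ah14} we have $G(\tau_T)\neq\emptyset$ and $T=V(\cap_{P\in T}P)$ is a non-irreducible closed subset. Pick three distinct members $\bar P_1,\bar P_2,\bar P_3$ of $Min(\bar M)$, corresponding to prime submodules $P_1,P_2,P_3$ of $M$ that are minimal over $Q=(\cap_{P\in T}P:M)M$. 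Since $\cap_{P\in T}P$ is the intersection of the primes in $T$ and each $\bar P_i$ is minimal, none of the $V(P_i)$ equals $T$ (otherwise $\bar P_i$ would be a non-zero submodule with $V(P_i)=T$, excluded by hypothesis, unless $\bar P_i=\overline{\cap_{P\in T}P}$, i.e. $\bar P_i=(\bar 0)$, which would force $\bar M$ to be prime).

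Next I would set $N_{ij}:=P_i\cap P_j$ for $i\neq j$. By Remark \ref{r2.1}, $V(N_{ij})=V(P_i)\cup V(P_j)$. The key point is to check that $V(P_1)\cup V(P_2)\cup V(P_3)=T$: indeed $\sqrt{Q}=\cap_{P\in T}P$ equals the intersection of all minimal primes over $Q$, and every minimal prime over $Q$ lies in some $V(P_i)$-type consideration — more precisely, each $P\in T$ contains $\cap_{P\in T}P$, hence contains some minimal prime submodule, which is one of finitely many; by re-choosing the $P_i$ among \emph{all} minimal members one arranges $\bigcup_i V(P_i)=T$. (If $|Min(\bar M)|$ is exactly $3$ this is immediate; if it is larger one still only needs three whose union of $V$'s, together with pairwise non-equality to $T$, can be verified — this is the step I expect to require the most care.) Granting this, each pair $P_i\cap P_j$ with $i<j$ satisfies $V(P_i\cap P_j)\subsetneq T$ and $V(P_i\cap P_j)\cup V(P_k)=T$ for the remaining index $k$, so $P_1\cap P_2$, $P_2\cap P_3$, $P_1\cap P_3$, together with an appropriate fourth vertex, are vertices of $G(\tau_T)$.

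Finally I would assemble the cycle. With $A:=P_1\cap P_2$, $B:=P_2\cap P_3$, $C:=P_1\cap P_3$: we have $V(A)\cup V(P_3)=T$, $V(B)\cup V(P_1)=T$, $V(C)\cup V(P_2)=T$, and also $V(A)\cup V(B)=V(P_1)\cup V(P_2)\cup V(P_3)=T$, similarly $V(B)\cup V(C)=T$ and $V(A)\cup V(C)=T$; moreover each $V(P_i)\neq T$ and each $V(P_i\cap P_j)\neq T$ by minimality and the standing hypothesis. Hence $A$, $B$, $C$ are pairwise adjacent, giving a triangle $A-B-C-A$, which is a cycle in $G(\tau_T)$. (One must verify $A,B,C$ are pairwise distinct as submodules: if say $P_1\cap P_2=P_2\cap P_3$ then this submodule is contained in $P_1$, $P_2$ and $P_3$; taking radicals and using that minimal primes over $Q$ are incomparable one derives a contradiction, or else directly $V(P_1\cap P_2)=V(P_1)\cup V(P_2)\cup V(P_3)=T$, contradicting that it is a vertex.) The main obstacle, as noted, is the bookkeeping showing one can select three minimal members whose closures cover $T$ while keeping all the relevant $V(\cdot)$'s strictly below $T$; once that combinatorial fact about $Min(\bar M)$ and the closed set $T$ is pinned down, the construction of the triangle is routine via Remark \ref{r2.1}.
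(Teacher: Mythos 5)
Your construction does not close the gap you yourself flag, and that gap is fatal to the argument as written. The adjacency of $A=P_1\cap P_2$ and $B=P_2\cap P_3$ requires $V(A)\cup V(B)=V(P_1)\cup V(P_2)\cup V(P_3)=T$, but the hypothesis is $|Min(\bar{M})|\geq 3$, not $=3$. If $\bar{M}$ has four or more minimal prime submodules (e.g.\ $M=R=k_1\times k_2\times k_3\times k_4$ with $T=Spec(R)$), then $V(P_1)\cup V(P_2)\cup V(P_3)$ omits $P_4\in T$ and no choice of three minimal members covers $T$, so your $A,B,C$ are pairwise non-adjacent and need not even be vertices. The natural repair, taking $N_i=\cap_{j\neq i}P_j$ over \emph{all} minimal primes, runs into its own problems: it needs $Min(\bar{M})$ finite (Remark \ref{r2.1} only gives $V(N\cap K)=V(N)\cup V(K)$ for finite intersections), it needs every member of $T$ to lie in some $V(P_i)$ (a Zorn argument you'd have to supply), and it needs $V(N_i)\neq T$, i.e.\ $(P_i:M)\not\supseteq(P_j:M)$ for $j\neq i$, which does not follow from incomparability of the $P_i$ as submodules. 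A further warning sign is that your argument never invokes the hypothesis that $Ann(\bar{M})$ is a nil ideal, which the statement carries for a reason.

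The paper's proof is entirely different and sidesteps all of this combinatorics: it argues by contradiction, assuming $G(\tau_T)$ has no cycle, so that Theorem \ref{t3.4} forces $G(\tau_T)$ to be a star. Letting $N$ be the center, $\overline{\sqrt{(N:M)M}}$ is a minimal submodule of $\bar{M}$; if its square is nonzero, the nil hypothesis together with Lemmas \ref{l2.4} and \ref{l2.5} produces an idempotent splitting $\bar{M}\cong\bar{M_1}\oplus\bar{M_2}$, and then Proposition \ref{p2.3} and Lemma \ref{l3.3} (simple $\oplus$ prime) force $|Min(\bar{M})|=2$, contradicting $|Min(\bar{M})|\geq 3$; if the square is zero, then $V(N)=T$, contradicting that $N$ is a vertex. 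If you want to salvage a direct clique-from-minimal-primes construction, you should look at how Proposition \ref{p4.8}(a) handles it for cyclic modules via the annihilating-submodule graph, where the needed covering and separation properties are actually established.
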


\begin{proof}
If $G(\tau_T)$ is a tree, then by Theorem \ref{t3.4}, $G(\tau_T)$
is a star graph. Suppose that $G(\tau_T)$ is a star graph and $N$
is the center of star. Clearly, one can assume that
$\overline{\sqrt{(N:M)M}}$ is a minimal submodule of $\bar{M}$. If
$(\overline{\sqrt{(N:M)M}})^{2}\neq (\bar{0})$, then by Lemma
\ref{l2.4}, there exists an idempotent $e\in R$ such that
$(\overline{\sqrt{(N:M)M}})=\overline{eM}$. Now by Proposition
\ref{p2.3} and Lemma \ref{l3.3}, we conclude that
$|Min(\bar{M})|=2$, a contradiction. Hence
$(\overline{\sqrt{(N:M)M}})^{2}=(\bar{0})$ and hence $V(N)=T$, a
contradiction. Therefore $G(\tau_T)$ contains a cycle.
\end{proof}

\section{Coloring of the Zariski-topology graph of modules}
The purpose of this section is to study of coloring of the Zariski
topology-graph of modules and investigate the interplay between
$\chi(G(\tau_T))$ and $\omega(G(\tau_T))$. We note that since
$E(G(\tau_T))\geq 1$ when
 $G(\tau_T)\neq \emptyset$, then $\chi(G(\tau_T)))\geq 2$.

\begin{thm}\label{t4.1}
Let $\bar{M}$ be an Artinian module such that for every minimal
submodule $\bar{N}$ of $\bar{M}$, $N$ is a vertex in $G(\tau_T)$.
Then $\omega(G(\tau_T))= \chi(G(\tau_T))$.
\end{thm}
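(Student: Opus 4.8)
The goal is to show $\omega(G(\tau_T))=\chi(G(\tau_T))$ under the hypothesis that $\bar M$ is Artinian and every minimal submodule $\bar N$ of $\bar M$ gives a vertex $N$ of $G(\tau_T)$. Since $\chi\ge\omega$ always holds, the real content is to produce a proper coloring with exactly $\omega(G(\tau_T))$ colors, or equivalently to find a clique whose size is at least $\chi$. My plan is to exploit the algebraic structure forced by $\bar M$ being Artinian: because $\bar M$ has finite length, $R/Ann(\bar M)$ is close to a product of local Artinian rings, and Lemma~\ref{l2.2}, Proposition~\ref{p2.3}, Lemma~\ref{l2.4} and Lemma~\ref{l2.5} let us split off the pieces coming from idempotents. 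The key observation is that two vertices $N,L$ are adjacent in $G(\tau_T)$ exactly when $V(N)\cup V(L)=T$, i.e.\ (via Remark~\ref{r2.1} and Proposition~\ref{p2.6}) when $\sqrt{(N:M)M}\,\sqrt{(L:M)M}\subseteq\cap_{P\in T}P$, so adjacency depends only on the radicals $\overline{\sqrt{(N:M)M}}$, and the vertex set is organized into finitely many ``types'' indexed by which closed sets $V(N)$ can occur.

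\textbf{Key steps.} First I would reduce to $\bar M$ and observe that adjacency in $G(\tau_T)$ factors through the finite set of possible values of $V(N)$ for vertices $N$; call these $T_1,\dots,T_k$ (proper closed subsets of $T$ whose union with some complementary $V(L)$ is $T$). Group the vertices accordingly: $V_i=\{N:V(N)=T_i\}$. Two vertices $N\in V_i$, $L\in V_j$ are adjacent iff $T_i\cup T_j=T$, a condition depending only on $i,j$, and moreover no two vertices inside the same $V_i$ are adjacent (if $V(N)=V(L)=T_i\neq T$ then $V(N)\cup V(L)=T_i\neq T$). So $G(\tau_T)$ is a ``blow-up'' of the finite graph $H$ on vertex set $\{1,\dots,k\}$ with $i\sim j$ iff $T_i\cup T_j=T$: each class $V_i$ is independent, and the bipartite graph between $V_i$ and $V_j$ is complete when $i\sim j$, empty otherwise. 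Second, for a complete multipartite-type blow-up of a finite graph $H$, a maximum clique picks at most one vertex from each $V_i$, so $\omega(G(\tau_T))=\omega(H)$, and a proper coloring of $G(\tau_T)$ may reuse a single colour on all of $V_i$ (since $V_i$ is independent and the pattern of edges out of $V_i$ is the same for every vertex), giving $\chi(G(\tau_T))=\chi(H)$. Third, $H$ is a finite graph, and I claim $H$ is \emph{perfect} — indeed the ``union equals $T$'' relation makes $H$ a comparability-type graph: order the $T_i$ by reverse inclusion of complements, or directly check that $H$ contains no induced odd cycle of length $\ge 5$ and no odd antihole, because if $T_i\cup T_j=T$ and $T_j\cup T_l=T$ then the structure of the $T_i$'s (which are of the form $V(M_1\oplus K)$ or $V(K\oplus M_2)$ after the idempotent splitting, as in Proposition~\ref{p3.1}) forces $T_i\cup T_l=T$ or a smaller containment, killing long induced cycles. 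For a perfect graph $\omega(H)=\chi(H)$, and combining the three steps gives $\omega(G(\tau_T))=\omega(H)=\chi(H)=\chi(G(\tau_T))$.

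\textbf{Main obstacle.} The delicate point is Step 3: proving that the finite ``reduced'' graph $H$ satisfies $\omega(H)=\chi(H)$. The blow-up reduction (Steps 1--2) is essentially formal once one notices adjacency depends only on $V(N)$; the Artinian hypothesis is what guarantees finitely many classes and is also what, via the idempotent/splitting lemmas, pins down the shape of the closed sets $T_i$ enough to control $H$. I expect the cleanest route is to show directly that $\bar M$ decomposes as $\bar M_1\oplus\cdots\oplus\bar M_r$ with each $\bar M_i$ having a prime ``core'', so that each vertex's radical is (up to the correspondence) determined by a subset $S$ of $\{1,\dots,r\}$ of ``missing'' coordinates, and $V(N)\cup V(L)=T$ translates to $S_N\cap S_L=\emptyset$; then $H$ is (an induced subgraph of) a Kneser-type / disjointness graph on subsets, whose clique and chromatic numbers coincide by a direct counting argument (assign to each vertex the colour $\min S_N$, say, and check a clique of that size exists). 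Getting this combinatorial translation exactly right, and handling the possible nilpotent ``$N^2=0$'' minimal submodules (where Lemma~\ref{l2.5} does not produce an idempotent) without breaking the finiteness of the type set, is where the work lies; the hypothesis that every minimal submodule is a vertex is presumably used precisely to make those nilpotent pieces contribute honest vertices so the counting is tight.
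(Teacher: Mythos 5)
Your endgame --- assigning to each vertex the set of ``missing/contained coordinates'' and colouring by the minimum, with adjacency forcing disjointness --- is essentially the paper's proof. But the scaffolding you build to get there has genuine gaps. First, you assert that the vertices fall into \emph{finitely many} types $T_1,\dots,T_k$ according to the value of $V(N)$. That finiteness is not justified: $\bar M$ Artinian does not make $R/Ann(\bar M)$ Artinian (e.g.\ $\mathbb{Z}_{p^\infty}$ over $\mathbb{Z}$), and the closed sets $V(N)$ are governed by ideals of $R$ up to radical, not by the submodule lattice of $\bar M$, so there is no a priori bound on the number of types. Second, your Step 3 --- that the reduced graph $H$ is perfect --- is exactly the point you admit is delicate, and the route you sketch for it relies on an idempotent decomposition of $\bar M$ via Lemma~\ref{l2.5}; that lemma needs $Ann(\bar M)$ to be a nil ideal, which is \emph{not} a hypothesis of this theorem, so the splitting $\bar M\cong \bar M_1\oplus\cdots\oplus\bar M_r$ is not available. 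As written, the proof does not close.

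The paper avoids all of this. Since $\bar M$ is Artinian it has minimal submodules; for distinct minimal $\bar N,\bar L$ one has $N\cap L=Q$, hence $V(N)\cup V(L)=V(N\cap L)=V(Q)=T$, so the minimal submodules already form a clique (of size $k$, say; if $k=\infty$ we are done). For any vertex $N$, the submodule $\overline{\sqrt{(N:M)M}}$ is nonzero (otherwise $V(N)=T$) and therefore contains some minimal $\bar K_i$; if two adjacent vertices both contained the same $\bar K_i$ in their radicals, then $K_i^2\subseteq \sqrt{(N:M)M}\,\sqrt{(L:M)M}\subseteq\cap_{P\in T}P$ would give $V(K_i)=T$, contradicting the hypothesis that every minimal submodule is a vertex. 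Hence $f(N)=\min\{i: K_i\subseteq\sqrt{(N:M)M}\}$ is a proper $k$-colouring, the same disjointness shows no clique exceeds $k$, and $\omega=\chi=k$. This is your ``Kneser-type'' counting done directly on $G(\tau_T)$, with no finite type graph, no perfection, and no idempotents. The hypothesis that minimal submodules are vertices is used precisely where you guessed, but to exclude a shared minimal submodule between adjacent vertices, not to control nilpotent pieces.
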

\begin{proof}
$\bar{M}$ is Artinian, so it contains a minimal submodule. Since
for every minimal submodule $\bar{N}$ of $\bar{M}$, $N$ is a
vertex in $G(\tau_T)$, we have $V(N)\neq T$. Also, $N\cap L=Q$,
where $\bar{N}$ and $\bar{L}$ are minimal submodules of $\bar{M}$.
It follows that $N$ and $L$ are adjacent in $G(\tau_T)$, where
$\bar{N}$ and $\bar{L}$ are minimal submodules of $\bar{M}$.
First, suppose that $\bar{M}$ has infinitely many minimal
submodules. Then $\omega(G(\tau_T))=\infty$ and there is nothing
to prove. Next, assume that $\bar{M}$ has $k$ minimal submodules,
where $k$ is finite. We conclude that $\chi(G(\tau_T))=
k=\omega(G(\tau_T))$. Obviously, $\omega(G(\tau_T))\geq k$. If
possible, assume that $\omega(G(\tau_T))> k$. Let
$\Sigma=\{N_{\lambda}\}_{\lambda\in I}$¸I, where
$|I|=\omega(G(\tau_T))$ be a maximum clique in $G(\tau_T)$. As
every $N_{\lambda}\in \omega$, $\overline{
\sqrt{(N_{\lambda}:M)M}}$ contains a minimal submodule, there
exists a minimal submodule $\bar{K}$ and submodules $N_{i}$ and
$N_{j}$ in $\omega$, such that $\bar{K}\subset \overline
{\sqrt{(N_{i}:M)M}}$ $\cap \overline{ \sqrt{(N_{j}:M)M}}$, and
hence $V(K)=T$, a contradiction. Hence $\omega(G(\tau_T))=k$.
Next, we claim that $G(\tau_T)$ is $k$-colorable. In order to
prove, put $A=\{\bar{K_{1}}, \ldots, \bar{K_{k}}\}$ be the set of
all minimal submodules of $\bar{M}$. Now, we define a coloring $f$
on $G(\tau_T)$ by setting $f(N) = min\{i|$ $K_{i}\subseteq
\sqrt{(N:M)M}\}$ for every vertex $N$ of $G(\tau_T)$. Let $N$ and
$L$ be adjacent in $G(\tau_T)$ and $f(N) = f(L) = j$. Thus
$K_{j}\subseteq \sqrt{(N:M)M}\cap \sqrt{(L:M)M}$, a contradiction.
It implies that $f$ is a proper $k$ coloring of $G(\tau_T)$ and
hence $\chi(G(\tau_T))\leq k=\omega(G(\tau_T))$, as desired.
\end{proof}

\begin{thm}\label{t4.2} Assume that $\bar{M}$ is a faithful module.
Then the following statements are equivalent.

\begin{itemize}
\item [(a)] $\chi(G(\tau_{Spec(M)}))=2$. \item [(b)]
$G(\tau_{Spec(M)})$ is a bipartite graph with two non-empty parts.
\item [(c)] $G(\tau_{Spec(M)})$ is a complete bipartite graph with
two non-empty parts. \item [(d)] Either $R$ is a reduced ring with
exactly two minimal prime ideals or $G(\tau_{Spec(M)})$ is a star
graph with more than one vertex.
\end{itemize}

\end{thm}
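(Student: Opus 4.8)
The plan is to prove the cycle of implications $(c)\Rightarrow(b)\Rightarrow(a)\Rightarrow(d)\Rightarrow(c)$, using Lemma \ref{l2.8} to identify $G(\tau_{Spec(M)})$ with $AG(M)^*$ throughout (since $\bar{M}=M$ is faithful, $Q=(0)$ and $T=Spec(M)$). The implications $(c)\Rightarrow(b)$ and $(b)\Rightarrow(a)$ are immediate: a complete bipartite graph is bipartite, and a bipartite graph with at least one edge has chromatic number exactly $2$ (we already know $\chi\ge 2$ from the remark opening Section~4). So the substance is in $(a)\Rightarrow(d)$ and $(d)\Rightarrow(c)$.

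For $(a)\Rightarrow(d)$, I would argue that $\chi(G(\tau_{Spec(M)}))=2$ forces $G(\tau_{Spec(M)})$ to be triangle-free, hence in particular has no cycle of length $3$. If the graph actually has no cycle at all, then by Theorem \ref{t3.4} it is a star graph; being nonempty it has more than one vertex, giving the second alternative in $(d)$. If it does have a cycle, then since it is triangle-free the shortest cycle has length $\ge 4$, and a standard fact shows a triangle-free graph with chromatic number $2$ is bipartite; I would then invoke Lemma \ref{l3.3} (the triangle-free case) to get that $M\cong M_1\oplus M_2$ with both $M_1,M_2$ prime $R$-modules for some idempotent $e$. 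Since $M$ is faithful, $Ann(M_1)\cap Ann(M_2)=Ann(M_1)Ann(M_2)=(0)$ after passing to the direct sum decomposition of $R=R_1\oplus R_2$, and $M_i$ prime means $Ann(M_i)=Ann(R_i/\!\ldots)$ is a prime ideal of $R_i$; one checks this makes $R$ reduced with exactly the two minimal primes $Ann(M_1)\oplus R_2$ and $R_1\oplus Ann(M_2)$. (If one of $M_1,M_2$ were simple we would instead land in the star-graph alternative via Corollary \ref{c3.2}, so the dichotomy in $(d)$ is exactly the dichotomy "no cycle / has a $4$-cycle".)

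For $(d)\Rightarrow(c)$, there are two cases matching the two alternatives. If $G(\tau_{Spec(M)})$ is a star graph with more than one vertex, then $K_{1,m}$ is by definition complete bipartite. If instead $R$ is reduced with exactly two minimal primes $\mathfrak{p}_1,\mathfrak{p}_2$, then $\mathfrak{p}_1\cap\mathfrak{p}_2=Nil(R)=(0)$ and $\mathfrak{p}_1,\mathfrak{p}_2$ are comaximal, so $R=R/\mathfrak{p}_1\oplus R/\mathfrak{p}_2$ via a nontrivial idempotent $e$; correspondingly $M=M_1\oplus M_2$ with each $M_i$ a faithful module over the domain $R_i=eR$ (resp. $(1-e)R$), hence a prime $R_i$-module and therefore a prime $R$-module by the Result~1.2 recalled before Proposition \ref{p3.1}. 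Now Proposition \ref{p3.1}(b) applies (the no-non-zero-submodule-with-$V(N)=T$ hypothesis is automatic here because $M_i$ faithful prime over a domain has no submodule $N$ with $(N:M)M=M$ unless $N=M$) and yields that $G(\tau_{Spec(M)})$ is complete bipartite; the parts are nonempty since both $M_1\oplus(0)$ and $(0)\oplus M_2$ are vertices.

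The main obstacle I expect is the bookkeeping in $(a)\Rightarrow(d)$: translating "$\chi=2$ and a $4$-cycle exists" into the hypothesis of Lemma \ref{l3.3}, and then correctly extracting "$R$ reduced with exactly two minimal primes" from "$M_1,M_2$ prime, $M$ faithful". One has to be careful that in the bipartite-with-a-cycle case neither $M_i$ is simple (otherwise $\chi=2$ could still hold but we'd be in the star case, not the reduced-ring case), and conversely that the reduced-ring case genuinely produces a non-star complete bipartite graph — i.e. both parts can have size $>1$. I would handle this by noting that $M_i$ prime non-simple over $R_i$ means $M_i$ has a proper nonzero submodule, and any two such submodules $N_i\oplus(0)$, $N_i'\oplus(0)$ lie in the same part, so the part containing the "$M_1\oplus(0)$-type" vertices has size equal to the number of nonzero submodules $N_2$ of $M_2$ with $(0\oplus N_2:M)M = 0\oplus M_2$, which is all of them since $M_2$ is prime; symmetric for the other part. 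This confirms the two alternatives in $(d)$ are mutually exclusive and exhaustive given $\chi=2$, closing the cycle.
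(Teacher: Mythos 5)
The paper's own proof is a two-line reduction: by Lemma \ref{l2.8}, for faithful $\bar{M}$ the graphs $G(\tau_{Spec(M)})$ and $AG(M)^{*}$ coincide, and the four equivalences are then exactly \cite[Theorem 3.2]{ah161} for the annihilating-submodule graph. You correctly start from this identification, but you then try to reprove the $AG(M)^{*}$ statement from the internal results of Section 3 (Lemma \ref{l3.3}, Proposition \ref{p3.1}, Corollary \ref{c3.2}), and that is where the argument genuinely breaks.

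Both main steps fail, and both failures are visible on the single example $M=R=k[x,y]/(xy)$ (faithful, reduced, with exactly the two minimal primes $(x)$ and $(y)$). First, in $(a)\Rightarrow(d)$ you ``invoke Lemma \ref{l3.3} to get $M\cong M_{1}\oplus M_{2}$'': Lemma \ref{l3.3} does not produce an idempotent, it presupposes one, and you have no mechanism for manufacturing it. Indeed none need exist: for $R=k[x,y]/(xy)$ the graph is complete bipartite with both parts infinite (the nonzero ideals inside $(x)$ versus those inside $(y)$), so $\chi=2$ and there are $4$-cycles, yet $R$ has no nontrivial idempotent; your dichotomy ``no cycle, hence star / has a $4$-cycle, hence $R$ splits'' is therefore false. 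Second, in $(d)\Rightarrow(c)$ you assert that a reduced ring with exactly two minimal primes has them comaximal, giving $R\cong R/\mathfrak{p}_{1}\oplus R/\mathfrak{p}_{2}$; this is false for the same ring, where $(x)+(y)=(x,y)\neq R$. The correct argument in this case partitions the vertices according to which minimal prime contains $\sqrt{(N:M)}$ rather than splitting $R$. A further (smaller) gap: the standing hypothesis of Proposition \ref{p3.1} and Lemma \ref{l3.3}, that $\bar{M}$ has no non-zero submodule $\bar{N}\neq\overline{\cap_{P\in T}P}$ with $V(N)=T$, is not part of Theorem \ref{t4.2} and is not ``automatic'' for faithful prime modules as you claim: $N=\mathbb{Z}\oplus 0$ inside the faithful prime $\mathbb{Z}$-module $M=\mathbb{Z}^{2}$ satisfies $(N:M)=0$, hence $V(N)=Spec(M)$. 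So as written the proposal does not establish $(a)\Leftrightarrow(d)$ or $(d)\Rightarrow(c)$; only $(c)\Rightarrow(b)\Rightarrow(a)$ and the star-graph halves of $(d)$ survive.
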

\begin{proof}
By using Lemma \ref{l2.8}, $G(\tau_{Spec(M)})$ and $AG(M)^{*}$ are
the same and so \cite[Theorem 3.2]{ah161} completes the proof.
\end{proof}

\begin{lem}\label{l4.3} Assume that $T$ is a finite set.
Then $\chi(G(\tau_T)))$ is finite. In particular,
$\omega(G(\tau_T)))$ is finite.
\end{lem}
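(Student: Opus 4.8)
The plan is to bound the chromatic number by controlling the possible ``colors'' that can be attached to a vertex via its behaviour on $T$. The starting observation is Remark \ref{r2.1}: if $N$ and $L$ are adjacent in $G(\tau_T)$, then $V(N)\cup V(L)=T$, and neither $V(N)$ nor $V(L)$ equals $T$. Thus to each vertex $N$ we may associate the set $V(N)\cap T$ — equivalently, thinking of $T$ as a finite set, the subset $S_N:=\{P\in T \mid (P:M)\supseteq (N:M)\}\subsetneq T$. The key point is that if $N$ and $L$ are adjacent then $S_N\cup S_L=T$, so in particular $S_N$ and $S_L$ cannot be equal (each is a proper subset of $T$, and two equal proper subsets cannot union to $T$); more is true — $S_N$ and $S_L$ are ``complementary enough'' that $T\setminus S_N\subseteq S_L$.

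First I would define the coloring $f(N):=S_N=V(N)\cap T$, taking values in the power set $\mathcal{P}(T)$, which has $2^{|T|}$ elements. The displayed inequality of the previous paragraph shows that adjacent vertices receive different colors: if $S_N=S_L$ with $N-L$ an edge, then $T=S_N\cup S_L=S_N\subsetneq T$, a contradiction. Hence $f$ is a proper coloring and $\chi(G(\tau_T))\le 2^{|T|}<\infty$. This establishes the first assertion. The ``in particular'' statement is then immediate from the inequality $\omega(G)\le\chi(G)$ recorded in the introduction: $\omega(G(\tau_T))\le\chi(G(\tau_T))\le 2^{|T|}<\infty$.

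I expect no serious obstacle here; the only point requiring a line of care is the verification that the color classes are genuinely independent sets, i.e. that adjacency forces $S_N\ne S_L$, which is exactly the Remark \ref{r2.1} computation $V(N)\cup V(L)=V^*(NL)=T$ together with the defining requirement $V(N),V(L)\ne T$ for vertices of $G(\tau_T)$. One could sharpen the bound (for instance, only proper subsets $S$ with $T\setminus S$ itself of the form $V(K)\cap T$ for a vertex $K$ can actually occur, and a subset and its ``complement'' cannot both be used unless they partition into an edge), but for the purpose of finiteness the crude count $2^{|T|}$ suffices, so I would not pursue the optimization in the proof.
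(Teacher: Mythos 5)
Your proof is correct and follows essentially the same idea as the paper: both color a vertex $N$ by data extracted from $V(N)\cap T$ and use the fact that adjacency forces $V(N)\cup V(L)=T$ while each $V(N)$ is a proper subset of $T$. The paper's coloring is the sharper instance $f(N)=\min\{n : P_n\notin V(N)\}$, which yields $\chi(G(\tau_T))\le |T|$ rather than your $2^{|T|}$, but for the finiteness claim either bound suffices.
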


\begin{proof}
Suppose that $T=\{P_{1}, P_{2},\ldots, P_{k}\}$ is a finite set of
distinct prime submodules of $M$. Define a coloring
$f(N)=min\{n\in \Bbb N|$ $P_{n}\notin V(N) \}$, where $N$ is a
vertex of $G(\tau_T)$. We can see that $\chi(G(\tau_T)))\leq k$.
\end{proof}

\begin{thm}\label{t4.4}
For every module $M$,  $\omega(G(\tau_T)) = 2$ if and only if
$\chi(G(\tau_T))=2$. In particular, $G(\tau_T)$ is bipartite if
and only if $G(\tau_T)$ is triangle-free.
\end{thm}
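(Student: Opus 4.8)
\textbf{Proof proposal for Theorem \ref{t4.4}.}

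The plan is to prove the contrapositive-style equivalence by showing that the only way for $\chi(G(\tau_T))$ to exceed $\omega(G(\tau_T))$ when $\omega(G(\tau_T))=2$ cannot occur; equivalently, a triangle-free Zariski topology-graph is in fact $2$-colorable. One direction is trivial: since $\chi(G)\geq\omega(G)$ always, $\chi(G(\tau_T))=2$ forces $\omega(G(\tau_T))\leq 2$, and since $G(\tau_T)\neq\emptyset$ has an edge we get $\omega(G(\tau_T))=2$. The content is the converse, so assume $\omega(G(\tau_T))=2$; equivalently $G(\tau_T)$ is triangle-free. We must produce a proper $2$-coloring, i.e.\ show $G(\tau_T)$ is bipartite, i.e.\ show $G(\tau_T)$ contains no odd cycle.

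First I would dispose of the small cases and reduce to understanding odd cycles. Since $\omega(G(\tau_T))=2$, there are no triangles, so the shortest possible odd cycle has length $\geq 5$. The key structural fact I would invoke is \cite[Theorem 3.4]{ah14} (used already in Theorem \ref{t3.4}), which gives $\mathrm{diam}(G(\tau_T))\leq 3$; I would also recall from Remark \ref{r2.1} that adjacency of $N$ and $L$ means $V(N)\cup V(L)=T$ with $V(N),V(L)\neq T$, and that for any two submodules $V(N)\cup V(K)=V(NK)=V^{*}(NK)$. The crucial closure-type observation, exactly as exploited in the proof of Theorem \ref{t3.4}, is that if $N-L$ is an edge of $G(\tau_T)$ and also $N-L'$ is an edge, then the product $LL'$ (or rather a submodule realizing $V(LL')$) behaves well: from $V(N)\cup V(L)=T$ and $V(N)\cup V(L')=T$ one cannot immediately combine, but if $L$ and $L'$ are non-adjacent common neighbors of $N$ one studies whether a third submodule forces a triangle.

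Concretely, suppose for contradiction that $G(\tau_T)$ contains an odd cycle, and take a shortest one $x_0-x_1-\cdots-x_{2k}-x_0$, of odd length $2k+1\geq 5$. Minimality of the cycle means it is an induced cycle (no chords), so $x_0,x_2$ are non-adjacent. Consider the submodule $W$ with $V(W)=V(x_0 x_2)=V(x_0)\cup V(x_2)$; I claim $W$ is a vertex of $G(\tau_T)$ adjacent to $x_1$, since $V(x_1)\cup V(W)=V(x_1)\cup V(x_0)\cup V(x_2)\supseteq V(x_1)\cup V(x_0)=T$, and $V(W)\neq T$ because otherwise $V(x_0)\cup V(x_2)=T$ would make $x_0-x_2$ an edge (or one of them improper). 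Now $W$ is adjacent to both $x_1$ and, by the same computation with $x_3$ in place of $x_1$ (using $V(x_2)\cup V(x_3)=T$), to $x_3$; and $x_1-x_2-x_3$ is a path. The point is to find the chord or shortcut: in the induced odd cycle, replacing the length-$2$ sub-path $x_0-x_1-x_2$ by the single vertex $W$, one checks that $W$ together with $x_3,\dots,x_{2k}$ and the adjacencies $W-x_3$ and $W-x_{2k}$ (this last from $V(x_{2k})\cup V(x_0)=T$, hence $V(x_{2k})\cup V(W)=T$) yields a closed walk of length $2k-1$, still odd, and strictly shorter — unless the walk degenerates, which I would analyze case-by-case exactly as in Theorem \ref{t3.4} ($W=x_j$ for some $j$ forces some $V(x_i)=T$ or a forbidden adjacency, a contradiction). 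Iterating, one collapses any odd cycle down to a triangle, contradicting $\omega(G(\tau_T))=2$. Hence $G(\tau_T)$ has no odd cycle and is bipartite, so $\chi(G(\tau_T))=2$.

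The main obstacle I anticipate is making the ``collapse'' step clean: one must verify that the auxiliary vertex $W$ is genuinely a new vertex of $G(\tau_T)$ (not equal to any $x_i$ and with $V(W)\neq T$), and that the shortened closed walk is still a genuine odd \emph{cycle} rather than a walk that revisits vertices — handling the degenerate coincidences is precisely the bookkeeping done in the proof of Theorem \ref{t3.4}, so I would mirror that argument. An alternative, possibly slicker route that avoids induction on cycle length: use $\mathrm{diam}(G(\tau_T))\leq 3$ directly to rule out induced cycles of length $\geq 5$ (an induced $C_n$ with $n\geq 6$ already has diameter $\geq 3$ internally, but one must rule out $C_5$ separately), and then show an induced $C_5$ produces, via the product submodule $W$ above adjacent to three consecutive vertices of the $C_5$, a triangle. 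I would present whichever of these two is shorter, but expect the explicit $C_5$ analysis to be the real work.
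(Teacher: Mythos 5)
Your proposal is correct and rests on the same core idea as the paper's proof: take a shortest odd cycle (necessarily of length at least $5$ since $\omega(G(\tau_T))=2$), adjoin the product of two non-adjacent cycle vertices as an auxiliary vertex via $V(NK)=V(N)\cup V(K)$, and derive a contradiction. The difference is in which product you form and hence in the endgame. You take $W=x_0x_2$, the product of the two neighbours of $x_1$; this $W$ is adjacent to $x_1$, $x_3$ and $x_{2k}$, but no two of these are themselves adjacent, so you are forced into the collapse-and-iterate argument (a shorter odd closed walk of length $2k-1$) and must dispose of every coincidence $W=x_j$, $3\le j\le 2k$, or else invoke the standard fact that an odd closed walk contains an odd cycle. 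The paper instead takes $N_3N_{2k+1}$, the product of the two second neighbours of the \emph{edge} $N_1-N_2$: since $V(N_3N_{2k+1})\cup V(N_1)\supseteq V(N_{2k+1})\cup V(N_1)=T$ and $V(N_3N_{2k+1})\cup V(N_2)\supseteq V(N_3)\cup V(N_2)=T$, this vertex is adjacent to both endpoints of an existing edge and so produces a triangle in one step, leaving only the two coincidences $N_3N_{2k+1}=N_1$ and $N_3N_{2k+1}=N_2$ to rule out (each of which the paper kills by exhibiting a shorter odd cycle or a triangle). Your route does go through --- the degenerate cases you flag are all handleable, and your verification that $W$ is a genuine vertex (no chord in a shortest odd cycle, so $V(x_0)\cup V(x_2)\neq T$) is sound --- but the paper's choice of auxiliary vertex makes the conclusion a one-step triangle rather than an induction on cycle length, which is why its case analysis is so much shorter.
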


\begin{proof}
Let $\omega(G(\tau_T)) = 2$. On the contrary assume that
$G(\tau_T)$ is not bipartite. So $G(\tau_T)$ contains an odd
cycle. Suppose that $C:= N_{1} - N_{2} - \ldots -  N_{2k+1} -
N_{1}$ be a shortest odd cycle in $G(\tau_T)$ for some natural
number $k$. Clearly, $k\geq 2$. Since $C$ is a shortest odd cycle
in $G(\tau_T)$, $N_{3}N_{2k+1}$ is a vertex. Now consider the
vertices $N_{1}, N_{2}$, and $N_{3}N_{2k+1}$. If
$N_{1}=N_{3}N_{2k+1}$, then $V(N_{4} N_{1})= T$. This implies that
$ N_{1} - N_{4} - \ldots - N_{2k+1} - N_{1}$ is an odd cycle, a
contradiction. Thus $N_{1}\neq N_{3}N_{2k+1}$. If $N_{2}=
N_{3}N_{2k+1}$, then we have $C_{3}= N_{2} - N_{3}$ - $N_{4} -
N_{2}$, again a contradiction. Hence $N_{2}\neq N_{3}N_{2k+1}$. It
is easy to check $N_{1}, N_{2}$, and $N_{3}N_{2k+1}$ form a
triangle in $G(\tau_T)$, a contradiction. The converse is clear.
In particular, we note that empty graphs are bipartite graphs.
\end{proof}

\begin{cor}
Assume that $e\in R$ is an idempotent element and $\bar{M}$ does
not have a non-zero submodule
 $\overline{\cap_{P\in T}P}\neq \bar{N}$ with $V(N)=T$. Then $G(\tau_T)$ is
a complete bipartite graph if and only if $\bar{M_{1}}$ and
$\bar{M_{2}}$ are prime modules.
\end{cor}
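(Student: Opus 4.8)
The statement to prove is: assuming $e\in R$ is an idempotent and $\bar M$ has no non-zero submodule $\overline{\cap_{P\in T}P}\neq\bar N$ with $V(N)=T$, then $G(\tau_T)$ is complete bipartite if and only if $\bar M_1$ and $\bar M_2$ are prime modules. The plan is to obtain this as an essentially immediate consequence of the results already established, namely Proposition \ref{p3.1}(b) for one direction and Lemma \ref{l3.3} together with Theorem \ref{t4.4} for the other.

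For the ``if'' direction, suppose $\bar M_1$ and $\bar M_2$ are prime modules. Then Proposition \ref{p3.1}(b) applies verbatim (its hypotheses are exactly ours) and yields that $G(\tau_T)$ is a complete bipartite graph. There is nothing further to do here.

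For the ``only if'' direction, assume $G(\tau_T)$ is complete bipartite. A complete bipartite graph with at least one edge is in particular triangle-free: a triangle would need all three vertices in one part, but within a part there are no edges. (If $G(\tau_T)=\emptyset$ there is nothing to prove, since then the conclusion can be read vacuously or one notes the statement concerns a non-trivial graph; alternatively, invoke that $E(G(\tau_T))\geq 1$ whenever $G(\tau_T)\neq\emptyset$, so the triangle-free observation is the relevant case.) By Theorem \ref{t4.4}, triangle-free is equivalent to bipartite, but more to the point I would directly apply Lemma \ref{l3.3}: since $G(\tau_T)$ is triangle-free and the standing hypothesis on $\bar M$ matches that of Lemma \ref{l3.3}, we conclude that both $\bar M_1$ and $\bar M_2$ are prime $R$-modules, which is exactly what is wanted.

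The main thing to be careful about — the only real obstacle — is the degenerate/empty cases and making sure the hypotheses of Proposition \ref{p3.1}(b) and Lemma \ref{l3.3} are invoked with the right $e$: both statements are phrased for a fixed idempotent $e$ (with associated decomposition $\bar M\cong\bar M_1\oplus\bar M_2$), and the corollary fixes the same $e$, so the match is direct. One should also note that ``complete bipartite'' here should be understood to allow either part to be a singleton (so star graphs count), which is consistent with the paper's conventions and with Proposition \ref{p3.1}(b), whose conclusion does not assert both parts are large. With those points noted, the proof is two sentences: Proposition \ref{p3.1}(b) gives one implication, and Lemma \ref{l3.3} (applied to the triangle-free graph $G(\tau_T)$) gives the other.
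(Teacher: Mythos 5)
Your proposal is correct and follows essentially the same route as the paper: the paper also deduces the forward direction by noting that a complete bipartite graph is triangle-free (it cites Theorem \ref{t4.4} for this, though as you observe the implication is immediate) and then applies Lemma \ref{l3.3}, and it handles the converse by Proposition \ref{p3.1}(b). Your remarks on the empty case and on matching the idempotent $e$ are sensible but not points the paper dwells on.
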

\begin{proof}
Assume that $G(\tau_T)$ is a complete bipartite graph. Therefore
Theorem \ref{t4.4} states that $G(\tau_T)$ is a triangle-free
graph. So Lemma \ref{l3.3} follows that $\bar{M_{1}}$ and
$\bar{M_{2}}$ are prime modules. The conversely holds by
Proposition \ref{p3.1} (b).
\end{proof}

\begin{rem}\label{r4.5}
Assume that $S$ is a multiplicatively closed subset of $R$ such
that $S\cap (\cup_{P\in T}(P:M))=\emptyset$. Let $T_{S}=\{S^{-1}P:
P\in T\}$. One can see that $V(N)=T$ if and only if
$V(S^{-1}N)=T_{S}$, where $M$ is a finitely generated module.
\end{rem}

\begin{thm}\label{t4.6}
Let $S$ be a multiplicatively closed subset of $R$ defined in
Remark \ref{r4.5} and $M$ is a finitely generated module. Then
$G(\tau_{T_{S}})$ is a retract of $G(\tau_T)$ and
$\omega(G(\tau_{T_{S}}))= \omega(G(\tau_T))$.
\end{thm}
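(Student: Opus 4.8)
The plan is to produce an explicit retract homomorphism from $G(\tau_T)$ onto the subgraph $G(\tau_{T_S})$, then deduce the equality of clique numbers from the fact that retracts preserve $\omega$. First I would check that $G(\tau_{T_S})$ can be regarded as a subgraph of $G(\tau_T)$ at all: using Remark \ref{r4.5}, for a finitely generated $M$ the condition $V(N)=T$ is equivalent to $V(S^{-1}N)=T_S$, and likewise $V(N)\neq T$ corresponds to $V(S^{-1}N)\neq T_S$. Hence if $N$ is a vertex of $G(\tau_T)$ with witness $K$ (so $V(N)\cup V(K)=T$ and $V(N),V(K)\neq T$), then localizing and using $V(N)\cup V(K)=V(N\cap K)$ together with the fact that localization commutes with finite intersections, $S^{-1}N$ is a vertex of $G(\tau_{T_S})$ with witness $S^{-1}K$. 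This also shows $N\mapsto S^{-1}N$ carries edges to edges, since $V(N)\cup V(L)=T$ forces $V(S^{-1}N)\cup V(S^{-1}L)=T_S$ by the same translation. Conversely, every vertex $\mathfrak{n}$ of $G(\tau_{T_S})$ has the form $S^{-1}N$ for a submodule $N$ of $M$ containing no element of... more precisely $N$ may be taken as the contraction of $\mathfrak{n}$, and since $M$ is finitely generated the contraction-then-localization returns $\mathfrak{n}$; thus $G(\tau_{T_S})$ is (isomorphic to) an induced subgraph of $G(\tau_T)$ sitting on the vertices that are contractions of their own localizations.

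Next I would define the retract map $\phi\colon V(G(\tau_T))\to V(G(\tau_{T_S}))$ by $\phi(N)=$ (the contraction of) $S^{-1}N$; more cleanly, identify $G(\tau_{T_S})$ with the induced subgraph on vertices $N$ with $N=(S^{-1}N)^c$, and set $\phi(N)$ to be the saturation $N_S:=\{m\in M : sm\in N \text{ for some } s\in S\}$. One checks $\phi(N)$ is again a submodule of $M$ containing $N$, that $S^{-1}\phi(N)=S^{-1}N$, and that $\phi$ is the identity on $G(\tau_{T_S})$. The key point is that $\phi$ is a graph homomorphism: if $N-L$ is an edge of $G(\tau_T)$, then $V(N)\cup V(L)=T$, so $V(S^{-1}N)\cup V(S^{-1}L)=T_S$, so $V(\phi(N))\cup V(\phi(L))=T$ after contracting back (again using Remark \ref{r4.5}), and since $\phi(N),\phi(L)$ remain proper with $V(\phi(N)),V(\phi(L))\neq T$ (their localizations are $\neq T_S$ because $V(S^{-1}N),V(S^{-1}L)\neq T_S$), they are adjacent in $G(\tau_{T_S})$. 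Therefore $G(\tau_{T_S})$ is a retract of $G(\tau_T)$.

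For the clique numbers: since $G(\tau_{T_S})$ is a subgraph of $G(\tau_T)$, trivially $\omega(G(\tau_{T_S}))\leq \omega(G(\tau_T))$. For the reverse inequality, let $\{N_\lambda\}$ be a clique in $G(\tau_T)$; applying $\phi$ and using that a retract homomorphism maps edges to edges, $\{\phi(N_\lambda)\}$ spans a complete subgraph in $G(\tau_{T_S})$, provided the $\phi(N_\lambda)$ are pairwise distinct. Distinctness is the step I expect to be the main obstacle: if $\phi(N_\lambda)=\phi(N_\mu)$ for $\lambda\neq\mu$, then $S^{-1}N_\lambda=S^{-1}N_\mu$, and one must derive a contradiction from $N_\lambda - N_\mu$ being an edge, i.e. from $V(N_\lambda)\cup V(N_\mu)=T$ with both $\neq T$. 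Here I would argue that $V(N_\lambda)=V(S^{-1}N_\lambda)$-style translation gives $V(\phi(N_\lambda))=V(\phi(N_\mu))$, and since $\phi(N_\lambda)$ is a vertex adjacent (via $\phi$ of any third clique member, or directly) forcing $V(\phi(N_\lambda))\cup V(\phi(N_\mu))=V(\phi(N_\lambda))=T$, contradicting that $\phi(N_\lambda)$ is a genuine vertex of $G(\tau_{T_S})$. Hence the $\phi(N_\lambda)$ are distinct, so $\omega(G(\tau_{T_S}))\geq \omega(G(\tau_T))$, and equality follows. (In the infinite case, every finite subclique transfers, so the supremum is preserved as well.)
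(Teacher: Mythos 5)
Your proposal is correct and follows essentially the same route as the paper: both define the localization map $N\mapsto N_S$, use Remark \ref{r4.5} to show it preserves and reflects adjacency, rule out $N_S=K_S$ for adjacent $N,K$ by the observation that it would force $V(S^{-1}N)=T_S$ (the paper phrases this via $V(N_S^2)=T_S$), and then read off the retraction and the equality of clique numbers. Your identification of $G(\tau_{T_S})$ with the induced subgraph on saturated submodules is a slightly more careful way of setting up the retract than the paper's "choose a fixed preimage for each $N_S$", but it is the same idea.
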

\begin{proof}
Consider a vertex map $\phi: V(G(\tau_T)) \longrightarrow
V(G(\tau_{T_{S}})), N\longrightarrow N_{S}$. Clearly, $N_{S}\neq
K_{S}$ implies that $N\neq K$ and $V(N)\cup V(K)=T$ if and only if
$V(N_{S})\cup V(K_{S})= T_{S}$. Thus $\phi$ is surjective and
hence $\omega(G(\tau_{T_{S}}))\leq \omega(G(\tau_T))$. If $N \neq
K$ and $V(N)\cup V(K)=T$, then we show that $N_{S}\neq K_{S}$. On
the contrary suppose that $N_{S}= K_{S}$. Then $V(N_{S}^{2})
=V(N_{S}K_{S})=V(N_{S})\cup V(K_{S})= T_{S}$ and so $V(N^{2})=T$,
a contradiction. This shows that the map $\phi$ is a graph
homomorphism. Now, for any vertex $N_{S}$ of $G(\tau_{T_{S}})$, we
can choice a fixed vertex $N$ of $G(\tau_T)$. Then $\phi$ is a
retract (graph) homomorphism which clearly implies that
$\omega(G(\tau_{T_{S}}))= \omega(G(\tau_T))$ under the assumption.
\end{proof}

\begin{cor}\label{c4.7}
Let $S$ be a multiplicatively closed subset of $R$ defined in
Remark \ref{r4.5} and let $M$ be a finitely generated module. Then
$\chi(AG(M_{S}))= \chi(AG(M))$.
\end{cor}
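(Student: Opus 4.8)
The plan is to reduce the statement $\chi(AG(M_S)) = \chi(AG(M))$ to the corresponding fact about Zariski topology-graphs, specifically Theorem \ref{t4.6}, which already gives us $\omega(G(\tau_{T_S})) = \omega(G(\tau_T))$ together with the fact that $G(\tau_{T_S})$ is a retract of $G(\tau_T)$. First I would take $T = Spec(M)$, so that $G(\tau_{Spec(M)})$ is the relevant Zariski topology-graph. The key observation is that $AG(M)^*$ embeds into $G(\tau_{Spec(M)})$ (Lemma \ref{l2.7}), and conversely the vertex-identity map realizes $AG(M)^*$ as an induced subgraph; moreover, when one passes to $\bar{M} = M/(\cap_{P\in Spec(M)}P:M)M$, Lemma \ref{l2.8} identifies $G(\tau_{Spec(M)})$ with $AG(M)^*$ when $\bar M$ is faithful. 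The cleaner route, however, is to use the retract directly: a retract homomorphism $\phi\colon G \to H$ forces $\chi(G) = \chi(H)$, since $H \subseteq G$ gives $\chi(H) \le \chi(G)$, while any proper coloring of $H$ pulls back along $\phi$ to a proper coloring of $G$ (if $\{u,v\} \in E(G)$ then $\{\phi(u),\phi(v)\} \in E(H)$, so $\phi(u) \ne \phi(v)$ whenever they are adjacent, but we need them to receive different colors — this works because $\phi$ maps edges to edges and $H$-colorings separate adjacent $H$-vertices).

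The main steps, in order, are: (1) Recall from the proof of Theorem \ref{t4.6} that $\phi\colon V(G(\tau_T)) \to V(G(\tau_{T_S}))$, $N \mapsto N_S$, is a retract graph homomorphism when $M$ is finitely generated and $S \cap (\cup_{P\in T}(P:M)) = \emptyset$. (2) Observe the general graph-theoretic fact that if $H$ is a retract of $G$, then $\chi(G) = \chi(H)$: the inequality $\chi(H) \le \chi(G)$ is immediate since $H$ is a subgraph, and for the reverse, given a proper $c$-coloring $g$ of $H$, the composition $g \circ \phi$ is a proper $c$-coloring of $G$ because adjacent vertices of $G$ map to adjacent vertices of $H$ (hence to distinct colors). (3) Apply this with $G = G(\tau_{Spec(M)})$ and $H = G(\tau_{(Spec(M))_S})$ to get $\chi(G(\tau_{Spec(M)})) = \chi(G(\tau_{(Spec(M))_S}))$. (4) Identify $(Spec(M))_S$ with $Spec(M_S)$ under localization — here one uses that for a finitely generated $M$, the prime submodules of $M_S$ correspond to prime submodules $P$ of $M$ with $(P:M) \cap S = \emptyset$, via $P \mapsto S^{-1}P$, and Remark \ref{r4.5} shows this correspondence respects the sets $V(-)$. (5) Conclude using Lemma \ref{l2.8} (or Lemma \ref{l2.7} together with the argument of Theorem \ref{t4.4}/Lemma \ref{l4.3}) that $\chi(AG(M)) = \chi(G(\tau_{Spec(M)}))$ and $\chi(AG(M_S)) = \chi(G(\tau_{Spec(M_S)}))$, so the chain of equalities yields $\chi(AG(M_S)) = \chi(AG(M))$.

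The step I expect to be the main obstacle is (5) — bridging between the chromatic numbers of $AG(M)$ (the full annihilating-submodule graph) and $G(\tau_{Spec(M)})$. Lemma \ref{l2.8} only gives the identification when $\bar M$ is faithful, i.e. when $\cap_{P\in Spec(M)}P = Ann(M)M$-related conditions force $T = Spec(M)$ to behave well; for a general module one must instead argue that $AG(M)$ and $AG(M)^*$ have the same chromatic number (vertices outside $AG(M)^*$, i.e. those $N$ with $(N:M) = Ann(M)$, form a clique-like or easily-colorable piece), and that $AG(M)^*$ sits inside $G(\tau_{Spec(M)})$ as an induced subgraph carrying all the chromatic complexity. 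Alternatively, and perhaps more honestly, one proves $\chi(AG(M_S)) = \chi(AG(M))$ by directly exhibiting the localization map $N \mapsto N_S$ on $V(AG(M))$ as a retract homomorphism onto $AG(M_S)$ — mimicking the proof of Theorem \ref{t4.6} but for the annihilating-submodule graph rather than the Zariski topology-graph — and then invoking step (2). This direct approach sidesteps the comparison between the two graph families entirely; the only thing to check carefully is that $N_S \ne K_S$ whenever $N, K$ are distinct adjacent vertices of $AG(M)$, which follows as in Theorem \ref{t4.6} from $NK = (0)$ and $M$ finitely generated, since $N_S = K_S$ would give $N_S^2 = (0)$ and hence, by finite generation, $N^2 = (0)$ forcing $N$ not to be a genuine vertex or contradicting adjacency. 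I would present the proof via this second route, stating explicitly that the argument parallels that of Theorem \ref{t4.6} and Corollary \ref{c4.7}'s companion results, and flag the retract-preserves-$\chi$ lemma as the conceptual core.
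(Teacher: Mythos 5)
The paper states this corollary without a written proof; it is evidently meant to follow from Theorem \ref{t4.6} by the same retract construction together with the standard fact that a retract preserves the chromatic number. Your proposal identifies exactly this mechanism, and your step (2) --- $\chi(G)=\chi(H)$ when $H$ is a retract of $G$, obtained by pulling a proper coloring of $H$ back along the retract homomorphism --- is correct and is the conceptual core. Your decision to argue directly on $AG(M)$ rather than detouring through $G(\tau_{Spec(M)})$ is also the right call, since Lemma \ref{l2.8} identifies the two graphs only when $\bar{M}$ is faithful, a hypothesis the corollary does not grant.

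There is, however, a genuine gap in your verification that $N\mapsto N_{S}$ is a graph homomorphism from $AG(M)$ to $AG(M_{S})$, i.e.\ that distinct adjacent vertices $N,K$ of $AG(M)$ satisfy $N_{S}\neq K_{S}$. You claim that $N_{S}=K_{S}$ would give $N_{S}^{2}=(0)$, hence $N^{2}=(0)$ by finite generation, ``forcing $N$ not to be a genuine vertex.'' Neither implication holds. First, $(N^{2})_{S}=(0)$ does not yield $N^{2}=(0)$: even with everything finitely generated it only yields $sN^{2}=(0)$ for some $s\in S$. Second, and more importantly, $N^{2}=(0)$ is not a contradiction in $AG(M)$: a submodule with $(N:M)^{2}M=(0)$ is a perfectly legitimate (indeed self-annihilating) vertex of the annihilating-submodule graph. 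The corresponding step in the proof of Theorem \ref{t4.6} succeeds only because a vertex of $G(\tau_{T})$ satisfies $V(N)\neq T$ by definition, while $N_{S}=K_{S}$ forces $V(N_{S}^{2})=V(N_{S})\cup V(K_{S})=T_{S}$ and hence $V(N^{2})=V(N)=T$ by Remark \ref{r4.5}; there is no analogous built-in obstruction for $AG(M)$. So you must either supply a different argument ruling out the collapse $N_{S}=K_{S}$ (or showing it is harmless for the coloring), or retreat to $AG(M)^{*}$ and route through Lemmas \ref{l2.7} and \ref{l2.8} with the extra hypotheses they require. As written, the step you yourself flagged as ``the only thing to check carefully'' is precisely the step that fails.
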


\begin{cor}\label{c4.10}
Assume that $M$ is a semiprime module and $AG(M)^{*}$ does not
have an infinite clique. Then $M$ is a faithful module and
$0=(P_{1}\cap \ldots \cap P_{k}:M)$, where $P_{i}$ is a prime
submodule of $M$ for $i=1, \ldots , k$.
\end{cor}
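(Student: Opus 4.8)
The plan is to exploit the localization machinery from Remark \ref{r4.5}, Theorem \ref{t4.6}, and Corollary \ref{c4.7}. First I would dispose of faithfulness. Since $M$ is a semiprime module, $(0)$ is a semiprime submodule, so $(0)=\cap_{\lambda}P_\lambda$ for some family of prime submodules, and in particular $Ann(M)=\cap_\lambda(P_\lambda:M)$. Suppose $M$ is not faithful, i.e. $Ann(M)\neq(0)$. The strategy is to produce from a strictly descending (or suitably rich) chain of prime submodules an infinite clique in $AG(M)^*$, contradicting the hypothesis. Concretely, if $Ann(M)=\cap_{\lambda\in\Lambda}(P_\lambda:M)$ needs infinitely many $P_\lambda$ to realize the intersection (equivalently no finite subintersection already equals $Ann(M)$), then choosing for each $n$ a submodule $N_n$ whose radical is the intersection of the first $n$ primes, the products $N_iN_j$ land in $\cap_\lambda P_\lambda=(0)$ for $i\neq j$, so $\{N_n\}$ is an infinite clique in $AG(M)^*$ — provided each $N_n$ is a genuine vertex of $AG(M)^*$, i.e. $(N_n:M)\neq Ann(M)$ and there is a companion submodule. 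This is exactly the place where I expect the main obstacle: verifying the vertices are non-degenerate (that $(N_n:M)\ne Ann(M)$ and that a witnessing $K<M$ with $(K:M)\ne Ann(M)$ exists) requires care, and one may instead need to pass to $AG(M)$ rather than $AG(M)^*$ and then restore the $*$-condition, or to invoke a connectedness/reduction result. One natural escape route is to note that by Lemma \ref{l2.7} and Lemma \ref{l2.8}, once $M$ is faithful the graph $AG(M)^*$ coincides with $G(\tau_{Spec(M)})$, which lets one import statements about $G(\tau_T)$; but here we are trying to prove faithfulness, so this must come after.

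Granting faithfulness, I would then argue that $(0)$ is a \emph{finite} intersection of primes. Having established $Ann(M)=(0)=\cap_{\lambda}(P_\lambda:M)$ with $P_\lambda\in Spec(M)$, the same clique construction shows: if no finite subfamily $P_{1},\dots,P_{k}$ satisfies $(P_1\cap\cdots\cap P_k:M)=(0)$, then one builds an infinite ascending chain of annihilator-type ideals and a corresponding infinite set of pairwise-adjacent vertices in $AG(M)^*$, again contradicting the no-infinite-clique hypothesis. So some finite subintersection $P_1\cap\cdots\cap P_k$ has $(P_1\cap\cdots\cap P_k:M)=(0)$, which is the desired conclusion.

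In carrying this out I would use: (i) that every intersection of prime submodules is semiprime and, in a semiprime module, $(0)$ \emph{is} such an intersection (from the Introduction); (ii) Remark \ref{r2.1}, giving $V(N)\cup V(K)=V(NK)=V^*(NK)$, to translate between ideal-containments and adjacency; (iii) the identity $NK=((N:M)M:M)((K:M)M:M)M\subseteq\sqrt{(N:M)M}\,\sqrt{(K:M)M}$ used already in Lemma \ref{l2.7}, so that controlling radicals controls products. The combinatorial heart is the standard trick of encoding an infinite descending/ascending chain as an infinite clique: pick primes $P_1,P_2,\dots$ with all finite subintersections strictly decreasing (in the $(-:M)$ sense), set $N_n$ to be a submodule with $(N_n:M)=(P_1\cap\cdots\cap P_n\cap P_{n+1}':M)$ arranged so that $(N_i:M)(N_j:M)M\subseteq(0)$ for $i\neq j$ while $(N_n:M)\neq(0)$, and check adjacency via Remark \ref{r2.1}. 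The delicate point, as noted, is ensuring each $N_n$ and its partner sit in $AG(M)^*$ and not merely in $AG(M)$; I would handle this by first working in $AG(M)$, extracting the infinite clique there, and then observing that all but possibly finitely many of its vertices automatically satisfy the $*$-condition $(N:M)\neq Ann(M)=(0)$ by construction, so the clique survives restriction to $AG(M)^*$.
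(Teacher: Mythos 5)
The paper's own proof of Corollary \ref{c4.10} is a one-line citation to Theorem 3.7(b) of \cite{ah161}, so there is no internal argument to match; what matters is whether your self-contained replacement works, and it does not: the faithfulness claim is never established. Your dichotomy is ``either some finite subfamily already realizes the intersection $Ann(M)=\cap_{\lambda}(P_{\lambda}:M)$, or infinitely many primes are needed and one extracts an infinite clique.'' Neither branch says anything about $Ann(M)$ being zero: the first branch yields $Ann(M)=(P_{1}\cap \cdots \cap P_{k}:M)$ with no reason for this ideal to vanish, and the second is ruled out by hypothesis, which just returns you to the first. Concretely, $M=\mathbb{Z}/p\mathbb{Z}$ over $R=\mathbb{Z}$ is a semiprime module (here $(0)$ is a prime submodule), $AG(M)^{*}$ is empty and so has no infinite clique, yet $Ann(M)=p\mathbb{Z}\neq (0)$; non-faithfulness by itself produces no clique at all, so the mechanism you propose cannot prove faithfulness. (This example even conflicts with the corollary's conclusion as literally stated, which suggests the result secretly carries hypotheses from \cite{ah161} that neither you nor the statement makes explicit; in any case this step genuinely depends on the external theorem and cannot be recovered by your construction.)

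The second half --- finiteness of the intersection once faithfulness is granted --- is the right idea, the classical Beck-style encoding of ``infinitely many primes are needed'' as an infinite clique, but as written it is a plan rather than a proof. You posit submodules $N_{n}$ ``arranged so that'' $(N_{i}:M)(N_{j}:M)M=(0)$ for $i\neq j$ while $(N_{n}:M)\neq (0)$, without constructing them; the existence of a submodule with the prescribed colon ideal, the pairwise distinctness of the $N_{n}$, and the verification that each $N_{n}$ has a partner witnessing membership in $AG(M)^{*}$ are precisely the points you flag as delicate and then defer. Since those checks are the entire content of the step, the argument is incomplete. The localization machinery of Remark \ref{r4.5}--Corollary \ref{c4.7} that you invoke at the outset plays no role in either half and can be dropped.
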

\begin{proof}
By \cite[Theorem 3.7 (b)]{ah161}, $M$ is a faithful module and the
last assertion follows directly from the proof of \cite[Theorem
3.7 (b)]{ah161}.
\end{proof}

\begin{prop}\label{p4.8}
Let $\bar{M}$ be a cyclic module and let $T$ be a closed subset of
$Spec(M)$. We have the following statements.
\begin{itemize}
\item[(a)] If $\{P_{1}, \ldots, P_{n}\}\subseteq Min(T)$, then
there exists a clique of size $n$ in $G(\tau_T)$.\item[(b)] We
have $\omega(G(\tau_T))\geq |Min(T)|$ and if $|Min(T)|\geq 3$,
then $gr(G(\tau_T))=3$.\item[(c)] If $\sqrt{(\bar{0})}=
(\bar{0})$, then $\chi(G(\tau_{Spec(M)})) =
\omega(G(\tau_{Spec(M)})) = |Min(T)|$.
\end{itemize}

\end{prop}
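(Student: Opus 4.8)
The plan is to handle the three parts in sequence, with part (a) doing most of the work and parts (b), (c) following by small additional arguments.

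For part (a): since $T$ is closed, $T = V(\cap_{P\in T}P)$, and I want to produce $n$ pairwise-adjacent vertices. The natural candidates are the submodules $N_i$ with $\bar N_i = \overline{\cap_{j\neq i}P_j}$, i.e.\ $N_i = \cap_{j \ne i, P_j \in \{P_1,\dots,P_n\}}P_j$ (intersecting over the chosen minimal members, not all of $T$). First I would check each $N_i$ is a genuine vertex: we have $V(N_i) = V^*(\cap_{j\neq i}P_j) \supseteq \{P_j : j \ne i\}$, and because $\{P_1,\dots,P_n\}\subseteq Min(T)$ the prime $P_i$ does \emph{not} contain $\cap_{j\neq i}P_j$ (minimality of $P_i$ over $Q$ plus primeness gives that $\cap_{j\ne i}P_j \not\subseteq P_i$, using that a prime submodule containing an intersection of submodules must, after passing to the quotient $\bar M$ which is cyclic hence multiplication-like, contain one of them — here the cyclicity of $\bar M$ is what lets me argue with the ideals $(P_j:M)$ as in Remark \ref{r2.1}). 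Hence $P_i \notin V(N_i)$, so $V(N_i)\neq T$, and $N_i$ is a vertex. Then for $i\neq k$, $V(N_i)\cup V(N_k) = V(N_i \cap N_k) = V(\cap_{j}P_j)$ over all $j\in\{1,\dots,n\}$; since $T$ is closed and $\{P_1,\dots,P_n\}\subseteq Min(T)$, every $P\in T$ contains some $P_j$ hence contains $\cap_j P_j$, giving $V(\cap_j P_j) = T$. So $N_i - N_k$ is an edge and $\{N_1,\dots,N_n\}$ is a clique of size $n$. The main obstacle is exactly the step showing $N_i$ is a vertex (that $\cap_{j\ne i}P_j \not\subseteq P_i$): this is where minimality of the $P_i$ in $T$ and cyclicity of $\bar M$ are essential, and I would lean on the prime-avoidance-style reasoning for the ideals $(P_j : M)$ in $R/Ann(\bar M)$.

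For part (b): $\omega(G(\tau_T))\geq |Min(T)|$ is immediate from part (a) applied to any finite subset of $Min(T)$ (and is trivially true if $Min(T)$ is infinite, since then $\omega = \infty$). If $|Min(T)|\geq 3$, pick three distinct members $P_1,P_2,P_3\in Min(T)$; part (a) yields a triangle among $N_1,N_2,N_3$, so the girth $gr(G(\tau_T))=3$.

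For part (c): here $\bar M$ is cyclic with $\sqrt{(\bar 0)} = (\bar 0)$, so $\bar M$ is semiprime and $Q = \cap_{P\in Spec(M)}P$ with $T = Spec(M)$ closed. The inequality $\omega \le \chi$ is automatic, and $\omega \ge |Min(T)|$ comes from (b), so it suffices to produce a proper coloring with $|Min(T)|$ colors; in particular this forces $|Min(T)|<\infty$ is not needed a priori, but if $|Min(T)| = \infty$ all three quantities are $\infty$ and we are done, so assume $Min(T) = \{P_1,\dots,P_k\}$ is finite. Because $\bar M$ is semiprime and cyclic, $\cap_{i=1}^k P_i = Q$ (the minimal primes over $Q$ already intersect down to $Q$), and every vertex $N$ satisfies $Q \subseteq \sqrt{(N:M)M}$ with $\sqrt{(N:M)M}\neq M$, hence $\sqrt{(N:M)M}$ is contained in — equivalently, $(N:M)M$ is not contained in — at least one $P_i$; actually I want: $\sqrt{(N:M)M} = \cap\{P_i : \sqrt{(N:M)M}\subseteq P_i\}$ and since it is a proper semiprime submodule it misses at least one $P_i$. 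Define $f(N) = \min\{i : P_i \notin V(N)\} = \min\{i : \sqrt{(N:M)M}\not\subseteq P_i\}$. If $N - L$ is an edge then $V(N)\cup V(L) = T \ni P_i$ for all $i$, so for each $i$ either $P_i\in V(N)$ or $P_i \in V(L)$; were $f(N)=f(L)=i$ we would have $P_i\notin V(N)$ and $P_i\notin V(L)$, contradiction. Hence $f$ is a proper coloring with at most $k$ colors, so $\chi(G(\tau_{Spec(M)}))\leq k = |Min(T)| \leq \omega(G(\tau_{Spec(M)}))\leq \chi(G(\tau_{Spec(M)}))$, forcing equality throughout. The delicate point in (c) is verifying that every vertex's radical avoids some minimal prime and that the $P_i$ genuinely intersect to $Q$ — both of which use semiprimeness of $\bar M$ together with cyclicity so that the correspondence between submodules of $\bar M$ containing nothing extra and ideals of $R/Ann(\bar M)$ behaves well, exactly as in Lemma \ref{l4.3} and Theorem \ref{t4.1}.
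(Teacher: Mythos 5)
Your parts (b) and (c) are structured sensibly, and your coloring argument in (c) is essentially the paper's own device (the coloring $f(N)=\min\{i: P_i\notin V(N)\}$ from Lemma \ref{l4.3}), so I will focus on part (a), where there is a genuine gap. You define $N_i=\cap_{j\neq i,\, j\leq n}P_j$, intersecting only over the \emph{chosen} minimal members, and then claim that $V(N_i)\cup V(N_k)=V(\cap_{j\leq n}P_j)=T$ because ``every $P\in T$ contains some $P_j$.'' That claim is false under the stated hypothesis, which only requires $\{P_1,\ldots,P_n\}\subseteq Min(T)$: a member of $T$ may lie over a minimal member of $T$ outside your chosen subset. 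Concretely, take $M=R=k\times k\times k$ and $T=Spec(R)=\{p_1,p_2,p_3\}$ (closed, all three minimal), and apply your construction with $n=2$: then $N_1=p_2$, $N_2=p_1$, and $V(p_1\cap p_2)=\{p_1,p_2\}\neq T$ since $p_3\not\supseteq p_1\cap p_2$. So your two ``clique'' vertices are not even adjacent, although a clique of size $2$ certainly exists. Your verification that each $N_i$ is a vertex (via prime avoidance on the finitely many ideals $(P_j:M)$ and injectivity of the natural map for cyclic modules) is fine; it is the adjacency step that breaks.

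The natural repair is to set $N_i=\cap_{P\in T,\,P\neq P_i}P$, so that $N_i\cap N_k=\cap_{P\in T}P$ and adjacency follows immediately from $T$ being closed; but then the vertex condition $V(N_i)\neq T$ amounts to $\cap_{P\in T,\,P\neq P_i}(P:M)\not\subseteq (P_i:M)$, and for infinite $T$ this is no longer a finite prime-avoidance statement, so it needs a genuine argument. This is precisely the work the paper outsources: its proof of (a) simply invokes \cite[Theorem 2.18]{ah162} to get a clique of size $n$ in $AG(\bar{M})=AG(\bar{M})^{*}$ for the cyclic module $\bar{M}$, and then transports it into $G(\tau_T)$ via Lemma \ref{l2.7}; parts (b) and (c) are then read off from (a) together with Corollary \ref{c4.10}, Lemma \ref{l2.8} and \cite[Theorem 2.20]{ah162}. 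Since your (b) and the inequality $\omega\geq |Min(T)|$ in (c) are derived from your (a), they inherit the gap; the rest of your (c) (the $k$-coloring and the squeeze $\omega\leq\chi\leq k\leq\omega$) is correct once a valid clique of size $|Min(T)|$ is supplied.
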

\begin{proof}
(a) The proof is straightforward by the facts that
$AG(\bar{M})=AG(\bar{M})^{*}$ has a clique of size $n$ by
\cite[Theorem 2.18]{ah162} and $AG(\bar{M})$ is isomorphic with a
subgraph of $G(\tau_T)$ by Lemma \ref{l2.7}.

(b) This is clear by item (a).

(c) If $|Min(T)|=\infty$, then by Proposition \ref{p4.8} (b),
there is nothing to prove. Otherwise, \cite[Theorem 2.20]{ah162}
implies that $AG(\bar{M})$ does not have an infinite clique. So
$\bar{M}$ is a faithful module by Corollary \ref{c4.10}. Next,
Lemma \ref{l2.8} says that $G(\tau_{Spec(M)})$ and $AG(M)^{*}$ are
the same. Now the result follows by \cite[Theorem 2.20]{ah162}.
\end{proof}

\begin{lem}\label{l4.12} Assume that $\bar{M}$ is a semiprime module. Then
the following statements are equivalent.

\begin{itemize} \item [(a)] $\chi(G(\tau_{Spec(M)})))$ is finite. \item [(b)]
$\omega(G(\tau_{Spec(M)})))$ is finite. \item [(c)]
$G(\tau_{Spec(M)}))$ does not have an infinite clique.
\end{itemize}
\end{lem}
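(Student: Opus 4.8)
\textbf{Proof proposal for Lemma \ref{l4.12}.}
The plan is to establish the cycle of implications $(a)\Rightarrow(b)\Rightarrow(c)\Rightarrow(a)$, where the first two implications are essentially free. Indeed, $(a)\Rightarrow(b)$ holds for any graph because $\omega(G)\leq\chi(G)$ always, and a finite chromatic number forces a finite clique number. The implication $(b)\Rightarrow(c)$ is immediate: if $\omega(G(\tau_{Spec(M)}))$ is finite, then certainly $G(\tau_{Spec(M)})$ cannot contain an infinite clique. So the entire content of the lemma lies in $(c)\Rightarrow(a)$.

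For $(c)\Rightarrow(a)$, the idea is to reduce to the annihilating-submodule graph and apply the semiprime machinery already available. First I would dispose of the trivial case $G(\tau_{Spec(M)})=\emptyset$, where $\chi=0$ is finite. Otherwise, since $\bar M$ is semiprime we have $\cap_{P\in Spec(M)}P=(0)$ viewed appropriately, and by Lemma \ref{l2.7} together with the semiprime hypothesis one gets that $G(\tau_{Spec(M)})$ and $AG(M)^{*}$ are closely tied; in fact, when $\bar M$ is faithful, Lemma \ref{l2.8} identifies them. The key step is to invoke Corollary \ref{c4.10}: since $G(\tau_{Spec(M)})$ (hence $AG(M)^{*}$, via the subgraph relationship of Lemma \ref{l2.7}) has no infinite clique and $M$ is semiprime, $M$ must be faithful, and moreover $(0)=(P_1\cap\cdots\cap P_k:M)$ for finitely many prime submodules $P_i$. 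Then Lemma \ref{l2.8} gives that $G(\tau_{Spec(M)})$ equals $AG(M)^{*}$, so it suffices to bound $\chi(AG(M)^{*})$.

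The main obstacle will be producing the actual finite coloring of $AG(M)^{*}$ from the data $(0)=(P_1\cap\cdots\cap P_k:M)$. Here I would mimic the coloring argument of Lemma \ref{l4.3} and Theorem \ref{t4.1}: having finitely many primes $P_1,\ldots,P_k$ with trivial intersection-quotient, I would define a coloring on a vertex $N$ by recording the least index $n$ such that some specified containment or non-containment involving $P_n$ and $\sqrt{(N:M)M}$ holds, exploiting that a vertex $N$ in $AG(M)^{*}$ satisfies $NK=(0)$ for some $K$ with $(K:M)\neq Ann(M)$, so $\sqrt{(N:M)M}$ cannot contain all of the $P_i$. Two adjacent vertices $N,L$ with the same color $j$ would both have $P_j$ behaving the same way with respect to their radicals, forcing $P_j\supseteq\sqrt{(N:M)M}\,\sqrt{(L:M)M}$ for every such $j$, hence $P_j$ contains $\cap_{i}P_i$-related data for all $i$, contradicting that $N$ and $L$ are adjacent (which would make $\sqrt{NL}$ too small). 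This yields $\chi(G(\tau_{Spec(M)}))=\chi(AG(M)^{*})\leq k<\infty$, completing the cycle. The delicate point is getting the precise form of the coloring function right so that the adjacency contradiction goes through cleanly; the semiprimeness of $\bar M$ is exactly what guarantees $\sqrt{(\cdot)}$ behaves well enough for this.
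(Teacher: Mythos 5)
Your proposal follows essentially the same route as the paper: $(a)\Rightarrow(b)\Rightarrow(c)$ for free, then for $(c)\Rightarrow(a)$ pass via Lemma \ref{l2.7} to $AG(\bar{M})^{*}$, apply Corollary \ref{c4.10} to get finitely many primes $P_{1},\ldots,P_{k}$ with $(\cap_{P\in T}P:M)=(P_{1}\cap\ldots\cap P_{k}:M)$, and colour with the scheme of Lemma \ref{l4.3}. The only remark worth making is that the ``delicate point'' you worry about is not delicate at all: with $f(N)=\min\{n\mid P_{n}\notin V(N)\}$ the map is well defined (if every $P_{n}\in V(N)$ then $(N:M)\subseteq(P_{1}\cap\ldots\cap P_{k}:M)=(\cap_{P\in T}P:M)$, so $V(N)=T$), and if adjacent $N,L$ shared the colour $j$ then $P_{j}\notin V(N)\cup V(L)=Spec(M)$, which is absurd --- no radical computation is needed.
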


\begin{proof}
$(a)\Longrightarrow (b)\Longrightarrow (c)$ is clear.

$(c)\Longrightarrow (d)$ Suppose that $G(\tau_{Spec(M)}))$ does
not have an infinite clique. By Lemma \ref{l2.7},
$AG(\bar{M})^{*}$ does not have an infinite clique and so by
Corollary \ref{c4.10}, there exists a finite number of prime
submodules $P_{1}, ... ,P_{k}$ of $M$ such that $(\cap_{P\in
T}P:M)=(P_{1}\cap \ldots \cap P_{k}:M)$. Define a coloring
$f(N)=min\{n\in \Bbb N|$ $P_{n}\notin V(N) \}$, where $N$ is a
vertex of $G(\tau_T)$. Then we have $\chi(G(\tau_{Spec(M)})))\leq
k$.
\end{proof}

\begin{cor}\label{c4.13}
Assume that $AG(M/\cap_{P\in T}P)^{*}$ does not have an infinite
clique. Then $G(\tau_{Spec(M)})$ and $AG(M)^{*}$ are the same.
Also, $\chi(G(\tau_{Spec(M)})))$ is finite.
\end{cor}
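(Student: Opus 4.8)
The plan is to assemble Corollary~\ref{c4.10}, Lemma~\ref{l2.8}, and the colouring idea from Lemma~\ref{l4.12}. Write $N_0:=\cap_{P\in T}P$. Since $N_0$ is an intersection of prime submodules it is a semiprime submodule of $M$, so $M/N_0$ is a semiprime module, and by hypothesis $AG(M/N_0)^{*}$ has no infinite clique. Applying Corollary~\ref{c4.10} to $M/N_0$ therefore gives that $M/N_0$ is a faithful module and that $(\bar{P_1}\cap\cdots\cap\bar{P_k}:M/N_0)=(0)$ for finitely many prime submodules $\bar{P_1},\dots,\bar{P_k}$ of $M/N_0$.

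Faithfulness of $M/N_0$ means $(N_0:M)=Ann(M/N_0)=(0)$, hence $Q=(N_0:M)M=(0)$, so $\bar{M}=M/Q=M$; moreover $Ann(M)\subseteq(N_0:M)=(0)$, so $\bar{M}=M$ is faithful (and $T=Spec(M)$). Lemma~\ref{l2.8} now gives that $G(\tau_{Spec(M)})$ and $AG(M)^{*}$ are the same graph, which is the first assertion.

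For the second assertion, lift each $\bar{P_i}$ to a prime submodule $P_i$ of $M$ with $P_i\supseteq N_0$. Because $N_0\subseteq P_1\cap\cdots\cap P_k$ and $(N_0:M)=(0)$, the vanishing of $(\bar{P_1}\cap\cdots\cap\bar{P_k}:M/N_0)$ becomes $\cap_{i=1}^{k}(P_i:M)=(P_1\cap\cdots\cap P_k:M)=(0)$. Define a colouring of $G(\tau_{Spec(M)})$ by $f(N)=\min\{n\leq k : P_n\notin V(N)\}$. It is well defined: every vertex $N$ of $G(\tau_{Spec(M)})=AG(M)^{*}$ satisfies $(N:M)\neq Ann(M)=(0)$, whereas if $P_1,\dots,P_k$ all lay in $V(N)$ we would get $(N:M)\subseteq\cap_{i=1}^{k}(P_i:M)=(0)$. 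It is proper: adjacency of $N$ and $L$ forces $V(N)\cup V(L)=Spec(M)$, so $f(N)=f(L)=j$ would put $P_j$ outside $Spec(M)$, which is absurd. Hence $\chi(G(\tau_{Spec(M)}))\leq k<\infty$.

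The only points needing care are the translation of colon ideals between $M$ and $M/N_0$ (handled by $N_0\subseteq P_1\cap\cdots\cap P_k$ together with $(N_0:M)=(0)$) and the observation that vertices of $G(\tau_{Spec(M)})$ satisfy $(N:M)\neq Ann(M)$, which is automatic once the identification with $AG(M)^{*}$ is in hand; I do not expect a serious obstacle beyond correctly invoking the cited results.
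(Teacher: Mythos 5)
Your proof is correct and follows essentially the same route as the paper's: apply Corollary~\ref{c4.10} to the semiprime module $M/\cap_{P\in T}P$ to get faithfulness and the finite prime intersection, invoke Lemma~\ref{l2.8} for the identification with $AG(M)^{*}$, and reuse the $\min$-colouring from the proof of Lemma~\ref{l4.12}. You merely spell out details the paper leaves implicit (the lifting of the $\bar{P_i}$ and the well-definedness of the colouring), which is welcome but not a different argument.
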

\begin{proof}
Since $M/\cap_{P\in T}P$ is a semiprime module, by Corollary
\ref{c4.10}, $M/\cap_{P\in T}P$ is a faithful module and there
exists a finite number of prime submodules $P_{1}, ... ,P_{k}$ of
$M$ such that $(\cap_{P\in T}P:M)=(P_{1}\cap \ldots \cap
P_{k}:M)$. So the result follows by Lemma \ref{l2.8} and from the
proof of $(c)\Longrightarrow (d)$ of Lemma \ref{l4.12}.
\end{proof}

We recall that $M$ is said to be \textit{X-injective} if either
$X=\emptyset$ or the natural map of $X=Spec(M)$ is injective (see
\cite{HR101}).

\begin{prop}\label{p4.14} Suppose that $\sqrt{(\bar{0})}=
(\bar{0})$, for every
 minimal member $P$ of $T$, $(P:M)$
 is a minimal ideal of $R$, and
 $\bar{M}$ is an $X$-injective module. Then
the following statements are equivalent.

\begin{itemize} \item [(a)] $\chi(G(\tau_{Spec(M)}))$ is finite. \item [(b)]
$\omega(G(\tau_{Spec(M)}))$ is finite. \item [(c)]
$G(\tau_{Spec(M)})$ does not have an infinite clique. \item [(d)]
$Min(T)$ is a finite set.
\end{itemize}
\end{prop}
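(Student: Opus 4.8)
The plan is to prove the cycle of implications $(a)\Rightarrow(b)\Rightarrow(c)\Rightarrow(d)\Rightarrow(a)$, reusing as much of the machinery already built as possible. The first two implications $(a)\Rightarrow(b)$ and $(b)\Rightarrow(c)$ are immediate from $\chi(G)\geq\omega(G)$ and from the fact that an infinite clique would force $\omega$ to be infinite; I would dispatch them in one line. For $(c)\Rightarrow(d)$ I would argue that if $Min(T)$ were infinite, then choosing infinitely many distinct minimal members $P_1,P_2,\ldots$ of $T$ and using the hypothesis $\sqrt{(\bar 0)}=(\bar 0)$ together with the fact that each $(P_i:M)$ is a minimal ideal, one can cook up an infinite set of pairwise adjacent vertices, contradicting $(c)$. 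Concretely, since $\bar M$ is semiprime, Proposition~\ref{p4.8}(c) is the natural tool: under $\sqrt{(\bar 0)}=(\bar 0)$ we already have $\chi(G(\tau_{Spec(M)}))=\omega(G(\tau_{Spec(M)}))=|Min(T)|$, so $Min(T)$ infinite forces $\omega=\infty$ and in particular an infinite clique, contradicting $(c)$. That reduces $(c)\Rightarrow(d)$ to a citation of Proposition~\ref{p4.8}(c) plus the observation that $\sqrt{(\bar 0)}=(\bar 0)$ makes $\bar M$ semiprime.

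For $(d)\Rightarrow(a)$, assume $Min(T)=\{P_1,\ldots,P_k\}$ is finite. The key claim is that $\cap_{P\in T}P=P_1\cap\cdots\cap P_k$, i.e. the intersection defining $Q$ is already achieved by the minimal members; this is where the hypothesis that each $(P_i:M)$ is a minimal ideal and that $\bar M$ is $X$-injective should enter, since $X$-injectivity means the natural map $\psi$ is injective and hence distinct primes of $\bar M$ have distinct contractions, letting us control $V(N)$ through ideals of $R$. Granting the claim, I would then define the coloring $f(N)=\min\{n\in\mathbb N \mid P_n\notin V(N)\}$ for each vertex $N$ of $G(\tau_{Spec(M)})$ — exactly the coloring used in Lemma~\ref{l4.3} and in the proof of $(c)\Rightarrow(d)$ of Lemma~\ref{l4.12}. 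One checks $f$ is well-defined (no vertex $N$ can have $V(N)\supseteq\{P_1,\ldots,P_k\}$, else $V(N)=T$, contradicting that $N$ is a vertex) and that it is proper: if $N-L$ in $G(\tau_T)$ and $f(N)=f(L)=n$, then $P_n\notin V(N)\cup V(L)=T$, impossible. Hence $\chi(G(\tau_{Spec(M)}))\leq k<\infty$, which is $(a)$.

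The main obstacle I anticipate is the step in $(d)\Rightarrow(a)$ of justifying that finiteness of $Min(T)$ actually lets the coloring $f$ see all the relevant prime submodules — i.e. that $V(N)\subseteq T$ is "detected" by the finitely many minimal members $P_1,\ldots,P_k$. In the semiprime situation of Lemma~\ref{l4.12} this came for free from Corollary~\ref{c4.10}, which produced a \emph{finite} list of primes with $(\cap_{P\in T}P:M)=(P_1\cap\cdots\cap P_k:M)$; here I must instead extract such a finite list directly from the hypotheses $\sqrt{(\bar 0)}=(\bar 0)$, minimality of each $(P_i:M)$, and $X$-injectivity. I expect this to go through because $X$-injectivity reduces the combinatorics of $Spec(\bar M)$ to that of $Spec(R/Ann(\bar M))$ via $\psi$, and reducedness plus minimality of the ideals $(P_i:M)$ pins down $Min(T)$ as the full set of minimal primes over $Ann(\bar M)$, so their intersection is the nilradical, which is $(\bar 0)$. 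Everything else is a routine repetition of the colouring arguments already appearing in Lemmas~\ref{l4.3} and \ref{l4.12}.
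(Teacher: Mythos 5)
Your implications $(a)\Rightarrow(b)\Rightarrow(c)$ and your $(d)\Rightarrow(a)$ argument (the coloring $f(N)=\min\{n \mid P_n\notin V(N)\}$, checked to be well-defined and proper) are exactly what the paper does, and are fine at the same level of rigor as the paper itself. The genuine gap is in $(c)\Rightarrow(d)$: your ``concrete'' version of that step rests entirely on Proposition \ref{p4.8}(c), but Proposition \ref{p4.8} is stated only for $\bar{M}$ a \emph{cyclic} module, and cyclicity is not among the hypotheses of Proposition \ref{p4.14} ($X$-injectivity does not give it to you). So the one tool you cite to turn ``$Min(T)$ infinite'' into ``infinite clique'' is not available, and your preliminary sketch (``one can cook up an infinite set of pairwise adjacent vertices'') is never actually carried out — note that distinct minimal members of $T$ are not automatically pairwise adjacent in $G(\tau_T)$, so something really has to be done here.

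The route the paper takes, which you should substitute, is: $\sqrt{(\bar 0)}=(\bar 0)$ makes $\bar M$ semiprime; by Lemma \ref{l2.7} an infinite clique in $AG(\bar M)^{*}$ would yield one in $G(\tau_{Spec(M)})$, so $(c)$ forces $AG(\bar M)^{*}$ to have no infinite clique; Corollary \ref{c4.10} then produces finitely many primes $P_1,\ldots,P_k$ with $(\cap_{P\in T}P:M)=(P_1\cap\cdots\cap P_k:M)$. Only at this point do the remaining hypotheses enter: each minimal member $P$ of $T$ has $(P:M)$ a minimal ideal containing $(P_1\cap\cdots\cap P_k:M)$, hence $(P:M)$ ranges over a finite set, and $X$-injectivity of $\bar M$ (injectivity of the natural map) forces $P$ itself to range over a finite set, giving $|Min(T)|<\infty$. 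Your proposal never uses the minimal-ideal and $X$-injectivity hypotheses in this direction at all — a sign that the step was short-circuited — whereas they are precisely what converts the finite list of ideals from Corollary \ref{c4.10} into finiteness of $Min(T)$.
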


\begin{proof}
$(a)\Longrightarrow (b)\Longrightarrow (c)$ is clear.

$(c)\Longrightarrow (d)$ Suppose $G(\tau_{Spec(M)})$ does not have
an infinite clique. By Lemma \ref{l2.7}, $AG(\bar{M})^{*}$ does
not have an infinite clique and hence by Corollary \ref{c4.10},
there exists a finite number of prime submodules $P_{1}, ...
,P_{k}$ of $M$ such that $(\cap_{P\in T}P:M)=(P_{1}\cap P_{2}\cap
... \cap P_{k}:M)$. By assumptions, one can see that $Min(T)$ is a
finite set.

$(d)\Longrightarrow (a)$ Assume that $Min(T)$ is a finite set
(equivalently, $\bar{M}$ has a finite number of minimal prime
submodules) so that $(\cap_{P\in T}P:M)=(P_{1}\cap P_{2}\cap ...
\cap P_{k}:M)$, where $Min(T)=\{P_{1}, ... ,P_{k}\}$. Define a
coloring $f(N)=min\{n\in N|$ $P_{n}\notin V(N) \}$, where $N$ is a
vertex of $G(\tau_{Spec(M)})$. Then we have
$\chi(G(\tau_{Spec(M)}))\leq k$.
\end{proof}

\begin{eg}\label{e4.15}
If $M$ is a faithfully flat $R$-module (for example, free
modules), then $pM$ is a $p$-prime submodule of $M$, where $p$ is
a prime ideal of $R$ by \cite[Theorem 3]{lu84}. So for every
 minimal prime submodule $P$ of $M$, $(P:M)$
 is a minimal ideal of $R$.
\end{eg}

\begin{prop}\label{p4.16} Assume that $\sqrt{(\bar{0})}=
(\bar{0})$ and
 $\bar{M}$ is a faithful module.
Then the following statements are equivalent.

\begin{itemize} \item [(a)] $\chi(G(\tau_{Spec(M)}))$ is finite. \item [(b)]
$\omega(G(\tau_{Spec(M)}))$ is finite. \item [(c)]
$G(\tau_{Spec(M)})$ does not have an infinite clique. \item [(d)]
$R$ has a finite number of minimal prime ideals. \item [(e)]
$\chi(G(\tau_{Spec(M)}))=\omega(G(\tau_{Spec(M)}))=|Min(R)|=k$,
where $k$ is finite.
\end{itemize}
\end{prop}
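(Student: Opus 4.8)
The plan is to prove the chain of equivalences by reducing to the faithful, semiprime case already handled and to the module-theoretic results of \cite{ah162}. Since $\bar{M}=M$ is faithful, we have $Q=(0)$ and $T=Spec(M)$. The implications $(e)\Rightarrow(a)\Rightarrow(b)\Rightarrow(c)$ are immediate: $(e)\Rightarrow(a)$ is trivial, and $(a)\Rightarrow(b)\Rightarrow(c)$ hold since $\omega(G)\leq\chi(G)$ always and a graph with an infinite clique has infinite clique number. So the real work is to close the loop $(c)\Rightarrow(d)\Rightarrow(e)$.

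For $(c)\Rightarrow(d)$: suppose $G(\tau_{Spec(M)})$ has no infinite clique. By Lemma \ref{l2.7}, $AG(\bar{M})^{*}=AG(M)^{*}$ is isomorphic to an induced subgraph of $G(\tau_{Spec(M)})$, hence $AG(M)^{*}$ has no infinite clique. Since $\bar{M}=M$ is semiprime (indeed $\sqrt{(0)}=(0)$), Corollary \ref{c4.10} applies: $M$ is faithful (which we already know) and $(0)=(P_1\cap\cdots\cap P_k:M)$ for finitely many primes $P_i\in Spec(M)$. Now I want to upgrade this to: $R$ has only finitely many minimal primes. The point is that because $M$ is faithful, $Ann(M)=(0)$, and the natural map relates $Spec(M)$ to $Spec(R)$; the relation $\bigcap_{i}(P_i:M)=Ann(M)=(0)$ together with $\sqrt{(0)}=(0)$ in $R$ (which follows since $M$ faithful semiprime forces $Nil(R)=(0)$, as $Nil(R)M$ is a nilpotent submodule inside $(0)$) forces $(0)=\bigcap_i(P_i:M)$ to be an intersection of finitely many primes of $R$, so $Min(R)$ is finite (each minimal prime of $R$ must contain one of the finitely many primes $(P_i:M)$ and hence equal a minimal prime among them). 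This is the step I expect to be the main obstacle — carefully translating the finiteness of the prime submodule data into finiteness of $Min(R)$, using faithfulness and $\sqrt{(\bar 0)}=(\bar 0)$ to pin down the nilradical of $R$.

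For $(d)\Rightarrow(e)$: assume $|Min(R)|=k<\infty$. Since $M$ is faithful and semiprime, Lemma \ref{l2.8} gives that $G(\tau_{Spec(M)})$ and $AG(M)^{*}$ coincide. Then I invoke the structure theorem for $AG(M)^{*}$ of a faithful semiprime module with finitely many minimal primes — the analogue of \cite[Theorem 2.20]{ah162} used in Proposition \ref{p4.8}(c) — to conclude $\chi(AG(M)^{*})=\omega(AG(M)^{*})=|Min(R)|=k$. Concretely, list $Min(R)=\{\mathfrak{p}_1,\dots,\mathfrak{p}_k\}$, note the submodules $\mathfrak{p}_1\cdots\widehat{\mathfrak{p}_i}\cdots\mathfrak{p}_k M$ (omit the $i$-th factor) are pairwise adjacent vertices forming a $k$-clique so $\omega\geq k$; and define the coloring $f(N)=\min\{i: \mathfrak p_i\nsupseteq \sqrt{(N:M)M}/\cap\}$, equivalently $f(N)=\min\{i:\mathfrak p_i\notin \text{the support data of }N\}$ — adjacency $V(N)\cup V(L)=Spec(M)$ forces $\sqrt{(N:M)M}\cap\sqrt{(L:M)M}\subseteq\sqrt{(0)}=(0)$, so no minimal prime sits above both radicals, whence $f(N)\neq f(L)$; this gives a proper $k$-coloring and $\chi\leq k$. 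Combining with $\chi\geq\omega\geq k$ yields $(e)$. Finally, $(d)\Rightarrow(a)$ is the special case already recorded (the coloring above uses only $|Min(R)|<\infty$), closing the cycle.
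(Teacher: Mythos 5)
Your proposal is correct and follows essentially the same route as the paper: the paper's proof is a one-line reduction via Lemma \ref{l2.8} to the annihilating-submodule graph $AG(M)^{*}$ of a reduced faithful module, followed by citations to \cite[Proposition 3.11]{ah161} and \cite[Corollary 3.12]{ah161}, which are exactly the facts you reprove in-line (the clique of products of minimal primes omitting one factor, and the coloring by minimal primes of $R$). Your expansion of $(c)\Rightarrow(d)$ through Corollary \ref{c4.10} and the observation that a minimal prime of $R$ containing the prime ideal $(P_i:M)$ must equal it is sound, so the only difference is that you make explicit what the paper outsources.
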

\begin{proof}
This is clear by Lemma \ref{l2.8}, \cite[Proposition 3.11]{ah161},
and \cite[Corollary 3.12]{ah161}.
\end{proof}

\end{document}